\documentclass[reqno]{amsart}
\usepackage{amssymb}
\usepackage[usenames, dvipsnames]{color}
\usepackage{hyperref}
\usepackage{verbatim}

\theoremstyle{plain}
\newtheorem{theorem}{Theorem}[section]
\newtheorem{lemma}[theorem]{Lemma}

\newtheorem{proposition}[theorem]{Proposition}
\newtheorem{conjecture}[theorem]{Conjecture}

\theoremstyle{definition}

\theoremstyle{remark}
\newtheorem{remark}[theorem]{Remark}

\newcommand{\ef}{ \hfill $ \Box $ \vskip 3mm}
\newcommand{\bR}{{\mathbb R}}
\newcommand{\bN}{{\mathbb N}}

\def\la{\lambda}

\def\t{\tilde}

\def\q{\quad}
\def\qq{\qquad}
\def\th{\theta}
\def\g{\gamma}

\def\dl{\delta}

\def\lt{\left}
\def\les{\lesssim}
\def\rt{\right}

\def\i{\infty}
\def\e{\epsilon}

\def \ls{\lesssim}
\def\p{\partial}
\def\f{\frac}

\def\al{\alpha}

\def\s{\sqrt}

\def\be{\begin{equation}}
\def\ee{\end{equation}}
\def\bes{\begin{equation*}}
\def\ees{\end{equation*}}
\def\bali{\begin{aligned}}
\def\eali{\end{aligned}}


\numberwithin{equation}{section}

\makeatletter
\def\dashint{\operatorname%
{\,\,\text{\bf--}\kern-.98em\DOTSI\intop\ilimits@\!\!}}
\makeatother

\begin{document}

\title[Euler equations with vacuum and time-dependent damping]{Global existence and convergence to the modified Barenblatt solution for the compressible Euler equations with physical vacuum and time-dependent damping}

\author[X. Pan]{Xinghong Pan}
\address[X. Pan]{Department of Mathematics, Nanjing University of Aeronautics and Astronautics, Nanjing 211106, China}

\email{xinghong\_87@nuaa.edu.cn}

\thanks{X. Pan is supported by Natural Science Foundation of Jiangsu Province (No. SBK2018041027) and National Natural Science Foundation of China (No. 11801268).}

\subjclass[2010] {35A01, 35Q31}

\keywords{physical vacuum, compressible Euler equations, time-dependent damping}

\begin{abstract}
 In this paper, the smooth solution of the physical vacuum problem for the one dimensional compressible Euler equations with time-dependent damping is considered. Near the vacuum boundary, the sound speed is $C^{1/2}$-H\"{o}lder continuous. The coefficient of the damping depends on time, given by this form $\frac{\mu}{(1+t)^\la}$, $\la,\ \mu>0$, which decays by order $-\la$ in time. Under the assumption that $0<\la<1,\ 0<\mu$ or $\la=1,\ 2<\mu$, we will prove the global existence of smooth solutions and convergence to the modified Barenblatt solution of the related porous media equation with time-dependent dissipation and the same total mass when the initial data of the Euler equations is a small perturbation of that of the Barenblatt solution. The pointwise convergence rates of the density, velocity and the expanding rate of the physical vacuum  boundary are also given. The proof is based on space-time weighted energy estimates, elliptic estimates and Hardy inequality in the Lagrangian coordinates. Our result is an extension of that in Luo-Zeng [Comm. Pure Appl. Math. 69 (2016), no. 7, 1354-1396], where the authors considered the physical vacuum free boundary problem of the compressible Euler equations with constant-coefficient damping.
\end{abstract}
\maketitle

\section{Introduction}
In this paper, we investigate the global existence of smooth solutions for the physical vacuum boundary problem of the following 1-d compressible Euler equations with time-dependent damping.
\begin{equation} \label{eeed}
\left\{
\begin{aligned}
&\rho_t+(\rho u)_x=0\q \text{in}\ I(t):=\{(x,t)|x_-(t)<x<x_+(t),t>0\}, \\
&(\rho u)_t+(p(\rho)+\rho u^2)_x=-\f{\mu}{(1+t)^\la}\rho u \q \text{in}\ I(t) ,\\
&\rho>0 \q\text{in}\ I(t),\q \rho=0 \q \text{on}\ x_\pm(t) ,\\
&(\rho,u)=(\rho_0,u_0) \q \text{on}\ I(0):=\{x|x_-(0)<x<x_+(0)\},
\end{aligned}
\rt.
\end{equation}
where the boundary $x_\pm(t)$ satisfies
\bes
\dot{x}_\pm(t)=u(x_\pm(t),t).
\ees

Here $(x, t) \in \mathbb{R} \times[0, \infty),\ \rho,\ u,$ and $p$ denote the space and time variable, density, velocity, and pressure, respectively. $\mathrm{I}(t),\ x_\pm(t),$ and $\dot{x}_\pm(t)$ represent the changing domain occupied by the gas, the moving vacuum boundary and the velocity of $x_b(t)$, respectively. $-\f{\mu}{(1+t)^\la}\rho u$, appearing on the right-hand side of $\eqref{eeed}_2$ describes the frictional damping which will decay by order $-\la$ in time. We assume the gas is the isentropic flow and the pressure satisfies the $\gamma$ law:
\bes
p(\rho)=\f{1}{\g}\rho^{\gamma} \quad \text { for } \gamma>1.
\ees
(Here the adiabatic constant is set to be $\f{1}{\g}$.) Let $c=\sqrt{p^{\prime}(\rho)}$ be the sound speed. A vacuum boundary is called physical if
\bes
0<\left|\frac{\partial c^{2}}{\partial x}\right|<+\infty
\ees
in a small neighborhood of the boundary. In order to capture this physical singularity, the initial density is supposed to satisfy
\bes
\rho_{0}(x)>0 \quad \text { for } x_{-}(0)<x<+x_{+}(0),
\ees
\bes
\rho_{0}\left(x_\pm(0)\right)=0, \quad \text { and } \quad 0<\left|\left(\rho_{0}^{\gamma-1}\right)_{x}\left(x_\pm(0)\right)\right|<\infty.
\ees

For the Euler equations with time-dependent damping, now there are numerous works concerning about the global existence, finite-time blow up, and asymptotic behaviors of smooth solutions. As far as the author knows, the pioneer works came from Hou-Witt-Yin \cite{HWY:2017NON, HWY:2018PJM} considering the multi-dimensional case and Pan \cite{Pan:2016NA, Pan:2016JMAA, Pan:2020AA} considering the one-dimensional case. They studied the Euler equations with damping term like $-\f{\mu}{(1+t)^\la}\rho u$ with $\la,\ \mu>0$. A critical couple of numbers $(\la,\mu)$, depending on the space dimension, are given to separate the global existence and finite-time blow up of smooth solutions when the initial data is a small perturbation of the equilibrium $(\rho,u)=(1,0)$. In particular, Pan \cite{Pan:2016NA, Pan:2016JMAA} proved that $(\la,\mu)=(1,2)$ are the critical couple numbers for the one-dimensional Euler equations which means when $0<\la<1,0<\mu$ or $\la=1,\ 2<\mu$, the global smooth solution exists, while when $\la=1,\ 0<\mu\leq2$ or $\la>1,\ \mu>0$, the smooth solution will blow up in finite time. Later, various results are shown in this aspect.  Sugiyama \cite{Sy:2018NA} studied the blow up mechanism of smooth solutions with $\la=1, \mu<2$ or $\la>1,\ \mu>0$. Li $et\ al.$ \cite{LLMZ:2017JMAA} and Cui $et\ al.$ \cite{CYZZ:2018JDE} proved the time asymptotic profile of solutions when $\la<1$ and $(\rho,u)$ approach to different constants at space infinity $+\i$ and $-\i$.  See also some recent works in \cite{CLLMZ:2020JDE, GLM:2020SIAM, JM:2020ARXIV1,JM:2020ARXIV2} and references therein.

Let $M \in(0, \infty)$ be the initial total mass, then the conservation law of mass, $\eqref{eeed}_1$, gives
\[
\int_{x_{-}(t)}^{x_{+}(t)} \rho(x, t) d x=\int_{x_{-}(0)}^{x+(0)} \rho_{0}(x) d x=: M \quad \text { for } t>0.
\]

The compressible Euler equations of isentropic flow with constant-coefficient damping ($\la=0,\ \mu=1$) are closely related to the porous media equations:
\be\label{epm1r}
\lt\{
\bali
&\rho_{t}=p(\rho)_{x x}\\
&p(\rho)_{x}=-\rho u.
\eali\rt.
\ee
For the solution of $\eqref{epm1r}_1$, basic understanding of the solution with finite mass is provided by Barenblatt (cf. \cite{Bgi:1953AMM}), with the following form

\be\label{epm2r}
\bar{\rho}(x, t)=(1+t)^{-\frac{1}{\gamma+1}}\left[A-B(1+t)^{-\frac{2}{\gamma+1}} x^{2}\right]^{\frac{1}{\gamma-1}}
\ee
where
\bes
B=\frac{\g-1}{2 (\gamma+1)}\q \text{ and}\q A^{\frac{\gamma+1}{2(\gamma-1)}}=M \sqrt{B}\left(\int_{-1}^{1}\left(1-y^{2}\right)^{1 /(\gamma-1)} d y\right)^{-1}.
\ees
And the velocity is given by
\be\label{epm2rr}
\bar{u}(x,t)=\f{x}{(\g+1)(1+t)}.
\ee

In \cite{LZ:2016CPAM}, the authors proved the global existence of smooth solutions and convergence to \eqref{epm2r} and \eqref{epm2rr} for the physical vacuum free boundary problem of the compressible Euler equations with constant-coefficient damping.

In this paper, we consider the time-dependent damping $-\f{\mu}{(1+t)^\la}\rho u$ with $0<\la<1,\ 0<\mu$ or $\la=1,2<\mu$, which decays as time goes to infinity.  We will prove the global existence of smooth solutions and convergence to the modified Barenblatt solution of the related porous media equation with time-dependent dissipation and the same total mass when the initial data of the Euler equations is a small perturbation of that of the modified Barenblatt solution.

For the time-dependent damping case, the related porous media equations with time-dependent dissipation read as follows
\be\label{epm}
\lt\{
\bali
\rho_{t}=\f{(1+t)^\la}{\mu}p(\rho)_{x x},\\
p(\rho)_{x}=-\f{\mu}{(1+t)^\la}\rho u.
\eali
\rt.
\ee
The related solution of $\eqref{epm}_1$ with finite mass is given by
\be\label{epm1}
\bar{\rho}(x, t)=(1+t)^{-\frac{1+\la}{\gamma+1}}\left[A-B(1+t)^{-\frac{2(1+\la)}{\gamma+1}} x^{2}\right]^{\frac{1}{\gamma-1}}
\ee
where

\bes
B=\frac{\mu(1+\la)(\gamma-1)}{2 (\gamma+1)}\q \text{ and}\q A^{\frac{\gamma+1}{2(\gamma-1)}}=M \sqrt{B}\left(\int_{-1}^{1}\left(1-y^{2}\right)^{1 /(\gamma-1)} d y\right)^{-1}.
\ees

We call this solution \eqref{epm1} the modified Barenblatt solution. Here the constant $A$ is chosen such that it has the same total mass as that for the solution of \eqref{eeed}:
\bes
\int_{\bar{x}_{-}(t)}^{\bar{x}_+(t)} \bar{\rho}(x, t) d x=M=\int_{x_{-}(t)}^{x_+(t)}\q \rho(x, t) d x \text { for }\ t \geq 0,
\ees
where
\bes
\bar{x}_{\pm}(t)=\pm \s{AB^{-1}}(1+t)^{\f{1+\la}{\g+1}}.
\ees
The corresponding velocity is defined by
\be\label{ebvelocity}
\bar{u}(x,t)=-\f{(1+t)^\la}{\mu}\f{p(\bar{\rho})_x}{\bar{\rho}}= \f{(1+\la)x}{(\g+1)(1+t)}.
\ee

%

  We will show the global existence of smooth solutions and convergence of $(\rho,u)$ to \eqref{epm1} and \eqref{ebvelocity} when the initial data of system \eqref{eeed} is a small perturbation of that for \eqref{epm1} and \eqref{ebvelocity}. In particular, the pointwise convergence rates of the density, the velocity and the expanding rate of the vacuum boundary in time are obtained.

The physical vacuum problem of the compressible Euler equations in which the sound speed is $C^{1 / 2}$ -H\"{o}lder continuous across the vacuum boundary is a challenging and interesting problem in the study of free boundary problems for compressible fluids. Even the local-in-time existence theory is hard to prove since standard methods of symmetric hyperbolic systems do not apply.

The phenomena of a physical vacuum arises naturally in several important physical situations such as the equilibrium and dynamics of boundaries of gaseous stars (cf \cite{Jj:2014CPAM,LXZ:2014ARMA}). The local-in-time well-posedness for the one and three dimensional compressible Euler equations with physical vacuum has been achieved by Coutand $et\ al.$ \cite{CLS:2010CMP,CS:2011CPAM,CS:2012ARMA} and Jang-Masmoudi \cite{JM:2009CPAM,JM:2015CPAM}. However, due to the strong degeneracy and singular behaviors near the vacuum boundary, it is a great challenge to extend the local-in-time existence theory to the global one of smooth solutions. In analyses, it is hard to establish the uniform-in-time higher-order a prior energy estimates to obtain the global-in-time regularity of solutions near vacuum boundaries. Huang-Marcati-Pan \cite{HMP:2005ARMA} and Huang-Pan-Wang \cite{HPW:2011ARMA} proved the $L^{p}$ convergence of $L^{\infty}$-weak solutions for the Cauchy problem of the one-dimensional compressible Euler equations with constant-coefficient damping to Barenblatt solutions of the porous media equations. They used entropy-type estimates for the solution itself without deriving estimates for derivatives. However, the interfaces separating gases and vacuum cannot be traced in the framework of $L^{\infty}$ -weak solutions.  In order to understand the behavior and long-time dynamics of physical vacuum boundaries, study on the global-in-time regularity of solutions is essential. To the best of our knowledge, the first global-in-time result of smooth solutions in Euler equations with constant coefficient damping comes from Luo-Zeng \cite{LZ:2016CPAM}, where the authors proved the global existence of smooth solutions and convergence to Barenblatt solutions for the physical vacuum free boundary problem. This result is somewhat surprising due to the difficulties mentioned above. In order to overcome difficulties in obtaining global-in-time regularities of solutions near vacuum boundaries, the authors in \cite{LZ:2016CPAM} constructed higher-order space and time weighted energy and performed higher-order nonlinear energy estimates and elliptic estimates. In the construction of higher-order weighted energy, the space weights are used to capture the behavior of solutions near vacuum states and the time weights detect the decay of solutions to Barenblatt solutions, respectively.

The a prior estimates for the weighted energy in the paper of Luo-Zeng \cite{LZ:2016CPAM} can be closed globally in time relies heavily on the constant-coefficient damping term $-\rho u$. When the damping vanishes, shock will form. For the mathematical analysis of finite-time formation of singularities, readers can see Alinhac \cite{As:1995BB}, Chemin \cite{Cjy:1990CMP}, Courant-Friedrichs \cite{CF:1948NY}, Christodoulou \cite{Cd:2007EMS}, Rammaha \cite{Rma:1989PROCEEDING} as well as Sideris \cite{St:1985CMP} and references therein for more detail.

It is natural to ask whether there are some global-in-time results for the Euler equations with decayed damping and vacuum. So here we consider the Euler equations with time-dependent damping and vacuum boundary. The damping term takes this form $-\f{\mu}{(1+t)^\la}\rho u$, which decays by order $-\la$ in time as $t$ goes to infinity. We think this issue is more challenging since now we not only have degenerate vacuum boundary but also have degenerate damping.

 From \cite{CLS:2010CMP,CS:2011CPAM,CS:2012ARMA,JM:2009CPAM,JM:2015CPAM}, a powerful tool in the study of physical vacuum free boundary problems of the Euler equations is the weighted energy estimate. By introducing the spatial weight to overcome the singularity at the vacuum boundary, the authors there establish the local-in-time well-posedness theory. Yet weighted estimates only involving spatial weights seem to be limited to proving local existence results. Later, Luo-Zeng \cite{LZ:2016CPAM,Zhh:2017ARMA} introduce time weights to quantify the large-time behavior of solutions for the Euler equations with constant coefficient damping in one dimension and three dimensions with spherically symmetric data. The choice of time weights is suggested by looking at the linearized problem to get hints on how the solution decays.

Inspired by their space-time weighted higher energy, we can construct a similar weighted energy to study the global existence of smooth solutions and convergence of the Euler equations with time-dependent damping \eqref{eeed}. In the case of $0<\la<1,\ \mu>0$, our time weight for the space-time mixed derivatives of the solution is a little weaker than that in \cite{LZ:2016CPAM}. See $\eqref{eenergy1}_2$ below and $(2.16)_2$ in \cite{LZ:2016CPAM}, which we think is reasonable due to the degeneration of the time-dependent damping. However, there is no difference for the time weight in the case $\la=1,\ \mu>2$, which seems to be a little strange.
  Also in our linearized equation, when $\la=1$, we need $\mu>2$ to ensure the closure of our time weighted energy despite in the lower and higher derivative estimates. This seems to be essential to prove the global existence of system \eqref{eeed} since in our previous papers \cite{Pan:2016NA,Pan:2016JMAA}, we have showed that when $\la=1$, $\mu=2$ is the threshold to separate the global existence and finite-time blow up of smooth solutions to system \eqref{eeed} when the initial data is a small perturbation of equilibrium $(\rho,u)=(1,0)$.


The strategy of our proof will follow the line with that in \cite{LZ:2016CPAM,Zhh:2017ARMA}. First to simplify the energy estimates, we will use elliptic estimates to show that the weighted space-derivative energy can be controlled by the time-derivative energy. In this process, we need to use the Hardy inequality repeatedly. Then we perform the time-derivative energy estimates in $L^2$ norms by the a prior assumption. one of our novelty is to perform the energy estimate for the linearized equations with degenerate time-dependent damping. See Lemma \ref{lbasice} below. To close the energy, the weighted $L^\i$ norms of the solutions are needed which can be achieved by Sobolev embedding and Hardy inequality. The advantages of this approach can prove the global existence and large-time convergence of solutions with the detailed convergence rates simultaneously.

Before ending this introduction, we review some prior results on vacuum free boundary problems for the compressible Euler equations and related modes besides the results mentioned above. Liu-Yang \cite{LY:1997JDE} proved the local existence theory when the singularity near the vacuum is mild in the sense that $c^{\alpha}\ (0<\al\leq 1)$ ($c$ denote the sound speed) is smooth across the vacuum boundary for the one-dimensional Euler equations with damping. Their method is based on the theory of symmetric hyperbolic systems which is not applicable to physical vacuum boundary problems since only $c^{2}$, instead of $c^{\alpha}$ is required to be smooth across the gas-vacuum interface (further development of this type of theory can be found in \cite{XY:2005JDE}). A nice review of singular behavior of solutions near vacuum boundaries for compressible fluids can be found in \cite{Yt:2006JCAM}.  An instability theory of stationary solutions to the physical vacuum free boundary problem for the spherically symmetric compressible Euler-Poisson equations of gaseous stars for $6 / 5<\gamma<4 / 3$ was established in Jang \cite{Jj:2014CPAM}. the local-in-time well-posedness of the physical vacuum free boundary problem was investigated in \cite{GL:2012JDE, GL:2016JMPA} for the one and three dimensional Euler-Poisson equations. See also some recent development in \cite{Zhh:2019ARXIV,Zhh:2020ARXIV} and references therein.

 Throughout the rest of paper, $C$ will denote a positive constant that only depends on the parameters of the problem $\la,\ \mu,\ \gamma$ and $C_{a,b,c,...}$ denotes a positive constant depending on $a,\,b,\, c,\,...$ which may be different from line to line.  We will employ the notation $a \lesssim b$ to denote $a \leq C b$ and $a \thickapprox b$ to denote $C^{-1} b \leq a \leq C b$.

\section{Reformulation of the problem and main results}

\subsection{Fix the domain and Lagrangian variables}
 We make the initial interval of the porous media solution \eqref{epm1}, $\left(\bar{x}_{-}(0), \bar{x}_{+}(0)\right)$, as the reference interval and define a diffeomorphism
$$
\eta_{0}:\left(\bar{x}_{-}(0), \bar{x}_{+}(0)\right) \rightarrow\left({x}_{-}(0), {x}_{+}(0)\right)
$$
by
\bes
\int^{\eta_{0}(x)}_{x_-(0)} \rho_{0}(y) d y=\int^{x}_{\bar{x}_-(0)} \bar{\rho}_{0}(y) d y\q  \text{for}\ x \in\left(0, +\i\right)
\ees
where
$\bar{\rho}_{0}(y):=\bar{\rho}(y, 0)$ is the initial density of the solution \eqref{epm1}. Differentiating the above equality by $x$ indicates
\be\label{elaginitial}
\rho_{0}\left(\eta_{0}(x)\right) \eta_{0}^{\prime}(x)=\bar{\rho}_{0}(x) \quad \text { for } x \in\left(\bar{x}_{-}(0), \bar{x}_{+}(0)\right).
\ee
To simplify the presentation, set
$$
\mathcal{I}:=\left(\bar{x}_{-}(0), \bar{x}_{+}(0)\right)=\left(-\s{AB^{-1}}, \s{AB^{-1}}\right).
$$
To fix the boundary, we transform system \eqref{eeed} into Lagrangian variables. For
$x \in \mathcal{I},$ we define the Lagrangian variable $\eta(x, t)$ by
\bes
\lt\{
\bali
&\eta_{t}(x, t)=u(\eta(x, t), t) \quad\text{ for}\ t>0,\\
&\eta(x, 0)=\eta_{0}(x),
\eali\rt.
\ees
and set the Lagrangian density and velocity by
\bes
f(x, t)=\rho(\eta(x, t), t) \quad \text {and } \quad v(x, t)=u(\eta(x, t), t).
\ees
Then the Lagrangian version of system \eqref{eeed} can be written on the reference domain $\mathcal{I}$ as
\be\label{elagrangian}
\lt\{
\begin{array}{ll}
f_{t}+f v_{x} / \eta_{x}=0 & \text{in}\ \mathcal{I} \times(0, \infty),\\
f v_{t}+\f{1}{\g}\left(f^{\gamma}\right)_{x} / \eta_{x}=-\f{\mu}{(1+t)^\la}f v \quad&\text{in}\ \mathcal{I} \times(0, \infty),\\
f>0\ \text{in}\ \mathcal{I} \times(0, \i), &f=0\ \text{on}\ \partial \mathcal{I} \times(0, \i),\\
(f, v)=\left(\rho_{0}\left(\eta_{0}\right), u_{0}\left(\eta_{0}\right)\right) \quad& \text{on}\ \mathcal{I} \times\{t=0\}.
\end{array}
\rt.
\ee
The map $\eta(\cdot, t)$ defined above can be extended to $\overline{\mathcal{I}}=\left[-\s{AB^{-1}}, \s{AB^{-1}}\right].$  In the setting, the vacuum free boundaries for problem \eqref{eeed} are given by
\bes
x_{\pm}(t)=\eta\left(\bar{x}_\pm(0), t\right)=\eta(\pm\s{AB^{-1}}, t)\q  \text{ for } t \geq 0.
\ees
It follows from solving $\eqref{elagrangian}_{1}$ and using \eqref{elaginitial} that
\be\label{emass}
 f(x, t) \eta_{x}(x, t)=\rho_{0}\left(\eta_{0}(x)\right) \eta_{0}^{\prime}(x)=\bar{\rho}_{0}(x),\q x \in \mathcal{I}.
\ee
It should be noticed that we need $\eta_{x}(x, t)>0$ for $x \in \mathcal{I}$ and $t \geq 0$ to make the Lagrangian transformation sensible, which will be verified later. So, the initial density, $\bar{\rho}_{0},$ can be regarded as a parameter, and system
\eqref{elagrangian} can be rewritten as
\be\label{elagrangian1}
\lt\{
\begin{array}{ll}
\bar{\rho}_{0} \eta_{t t}+\f{\mu}{(1+t)^\la}\bar{\rho}_{0} \eta_{t}+\f{1}{\g}\left(\bar{\rho}_{0}^{\gamma} / \eta_{x}^{\gamma}\right)_{x}=0  &\text{in}\ \mathcal{I} \times(0, \infty),\\
\left(\eta, \eta_{t}\right)=\left(\eta_{0}, u_{0}\left(\eta_{0}\right)\right) & \text{on}\ \mathcal{I} \times\{t=0\}.
\end{array}
\rt.
\ee
\subsection{Ansatz}
Define the Lagrangian variable $\bar{\eta}(x, t)$ for the modified Barenblatt flow in $\overline{\mathcal{I}}$ by
\[
\bar{\eta}_{t}(x, t)=\bar{u}(\bar{\eta}(x, t), t)=\frac{(\la+1)\bar{\eta}(x, t)}{(\gamma+1)(1+t)} \quad \text { for } t>0 \text { and } \bar{\eta}(x, 0)=x
\]
so that
\be\label{ean1}
\bar{\eta}(x, t)=x(1+t)^{\f{\la+1}{\g+1}} \quad \text { for }(x, t) \in \overline{\mathcal{I}} \times[0, \infty),
\ee
and
\bes
\f{\mu}{(1+t)^\la}\bar{\rho}_{0} \bar{\eta}_{t}+\f{1}{\g}\left(\bar{\rho}_{0}^{\gamma} / \bar{\eta}_{x}^{\gamma}\right)_{x}=0 \quad \text { in } \mathcal{I} \times(0, \infty).
\ees
Since $\bar{\eta}$ does not solve $\eqref{elagrangian1}_{1}$ exactly, we introduce a correction $h(t),$ which is the solution of the following initial value problem of ordinary differential equations:
\be\label{ecorf}
\lt\{
\bali
&h_{t t}+\f{\mu}{(1+t)^\la}h_{t}-\f{\mu(\la+1)}{\g+1}\left(\bar{\eta}_{x}+h\right)^{-\gamma} +\bar{\eta}_{x t t}+\f{\mu}{(1+t)^\la}\bar{\eta}_{x t}=0,\\
&h|_{t=0}=h_{t}|_{t=0}=0.
\eali
\rt.
\ee
(Notice that $\bar{\eta}_{x}, \bar{\eta}_{x t},$ and $\bar{\eta}_{x t t}$ are independent of $x$.) The new ansatz is then given by
\be\label{ecorf1}
\tilde{\eta}(x, t):=\bar{\eta}(x, t)+x h(t).
\ee
so that
\be\label{ecorf2}
\bar{\rho}_{0} \tilde{\eta}_{t t}+\f{\mu}{(1+t)^\la}\bar{\rho}_{0} \tilde{\eta}_{t}+\f{1}{\g}\left(\bar{\rho}_{0}^{\gamma} / \tilde{\eta}_{x}^{\gamma}\right)_{x}=0 \quad \text { in } \mathcal{I} \times(0, \infty).
\ee
It should be noticed that $\tilde{\eta}_{x}$ is independent of $x$. We will prove in the Appendix that $h$ is a positive bounded function and $\tilde{\eta}$ behaves similarly to $\bar{\eta}$. That is, there exist positive constants $K$ and $c_k$ independent of time $t$ such that for all $t \geq 0$,

\noindent {\bf If $\boldsymbol{0<\la<1}$}:
\be\label{ecor3}
\bali
&(1+t)^{\f{\la+1}{\gamma+1}} \leq \tilde{\eta}_{x}(t) \leq K(1+t)^{\f{\la+1}{\gamma+1}}, \quad \tilde{\eta}_{x t}(t) \geq 0,\\
&\lt|\f{d^k\t{\eta}_x(t)}{dt^k}\rt|\leq c_k(1+t)^{\f{\la+1}{\g+1}-k}\q \text{for}\q k\in\bN.
\eali
\ee

\noindent {\bf If $\boldsymbol{\la=1}$}:

\be\label{ecor4}
\bali
&(1+t)^{\f{2}{\g+1}} \leq \tilde{\eta}_{x}(t) \leq K(1+t)^{\f{2}{\g+1}}, \quad \tilde{\eta}_{x t}(t) \geq 0,\\
&\lt|\f{d^k\t{\eta}_x(t)}{dt^k}\rt|\leq
\lt\{
\bali
&c_k(1+t)^{\f{2}{\g+1}-k}\q \text{for}\q k<\mu+\f{2}{\g+1}\ \text{and}\ k\in\bN,\\
&c_k(1+t)^{-\mu}\ln(1+t) \q \text{for}\q k\geq\mu+\f{2}{\g+1}\ \text{and}\ k\in\bN.
\eali
\rt.
\eali
\ee

\subsection{Main results.}

Let
\be  \label{eerro}
 w(x,t)=\eta(x,t)-\t{\eta}(x,t).
\ee Then subtracting \eqref{ecorf2} from $\eqref{elagrangian1}_1$, we see that $w$ satisfies
\be\label{eerr}
\lt\{
\bali
&\bar{\rho}_{0} w_{t t}+\f{\mu}{(1+t)^\la}\bar{\rho}_{0} w_{t}+\f{1}{\g}\left[\bar{\rho}_{0}^{\gamma} \lt((\t{\eta}_x+w_x)^{-\g}-\t{\eta}^{-\g}_x\rt)\right]_{x}=0\\
 &\qq\qq\qq\qq\qq\qq\qq\qq\q\ \   \text{in}\ \mathcal{I} \times(0, \infty),\\
&\left(w, w_{t}\right)=\left(\eta_{0}-x, u_{0}\left(\eta_{0}\right)-\f{\la+1}{\g+1}x\right) \q \text{on}\ \mathcal{I} \times\{t=0\}.
\eali
\rt.
\ee

In the rest of the paper, we will use the notation
$$
\int:= \int_{\mathcal{I}}, \quad\|\cdot\|:=\|\cdot\|_{L^{2}(\mathcal{I})}, \quad \text { and } \quad\|\cdot\|_{L^{\infty}}:=\|\cdot\|_{L^{\infty}(\mathcal{I})}.
$$
Denote $\al=\f{1}{\g-1}$ and set
\bes
m=\lt\{
\begin{array}{ll}
4+[\al]  &\text{if}\ \la<1,\\
\min\lt\{4+[\al], [\mu+2/(\g+1)]\rt\} & \text{if}\ \la=1.
\end{array}
\rt.
\ees
 Let $\dl\in(0,\f{2(\la+1)}{\g+1})$.  For $j=0, \ldots, m$ and $i=0, \ldots, m-j,$ we set

\bes
\begin{aligned}
&\mathcal{E}_{j}(t):=(1+t)^{2 j-\dl{\bf 1_{\la<1}}} \int_{\mathcal{I}} \lt[\bar{\rho}_{0}(\partial_{t}^{j} w)^{2}+\bar{\rho}_{0}^{\gamma}(\partial_{t}^{j} w_{x})^{2}\rt.\\
                             &\qq\qq\qq\left.+(1+t)^{\la+1} \bar{\rho}_{0}(\partial_{t}^{j+1} w)^{2}\right](x, t) d x,\\
&\mathcal{E}_{j, i}(t):=(1+t)^{2 j-\dl{\bf 1_{\la<1}}} \int_{\mathcal{I}} \left[\bar{\rho}_{0}^{1+(i-1)(\gamma-1)}(\partial_{t}^{j} \partial_{x}^{i} w)^{2}\rt.\\
  &\qq\qq\qq\qq\qq+ \lt.\bar{\rho}^{1+(i+1)(\g-1)}_{0}(\partial_{t}^{j+1} w)^{2}\right](x, t) d x,
\end{aligned}
\ees
where ${\bf 1}_{\la<1}$ is the characteristic function on $\{\la<1\}$, which means
\bes
{\boldsymbol 1}_{\la<1}=\lt\{
\bali
& 1,\qq \text{if}\ \la<1,\\
& 0,\qq \text{if}\ \la=1.
\eali
\rt.
\ees
If we set
$$
\sigma(x):=\bar{\rho}_{0}^{\gamma-1}(x)=A-Bx^2, \quad x \in \mathcal{I},
$$
then $\mathcal{E}_{j}$ and $\mathcal{E}_{j, i}$ can be rewritten as
\be\label{eenergy1}
\begin{aligned}
&\mathcal{E}_{j}(t)=(1+t)^{2 j-\dl{\bf 1}_{\la<1}} \int_{\mathcal{I}}\left[\sigma^{\alpha}\left(\partial_{t}^{j} w\right)^{2}+\sigma^{\alpha+1}\left(\partial_{t}^{j} w_{x}\right)^{2}\right.\\
&\qq\qq\qq \left.+(1+t)^{\la+1} \sigma^{\alpha}\left(\partial_{t}^{j+1} w\right)^{2}\right](x, t) d x,\\
&\mathcal{E}_{j, i}(t)=(1+t)^{2 j-\dl{\bf 1}_{\la<1}} \int_{\mathcal{I}}\left[\sigma^{\alpha+i+1}\left(\partial_{t}^{j} \partial_{x}^{i+1} w\right)^{2}\rt.\\
&\qq\qq\qq\qq\qq\lt.+\sigma^{\alpha+i-1}\left(\partial_{t}^{j} \partial_{x}^{i} w\right)^{2}\right](x, t) d x.
\end{aligned}
\ee

\begin{remark}
Our weighted energy defined in \eqref{eenergy1} is similar with but different from that in Luo-Zeng \cite{LZ:2016CPAM}, especially for the time weights.
\end{remark}
The total energy is defined by
$$
\mathcal{E}(t):=\sum_{j=0}^{m}\left(\mathcal{E}_{j}(t)+\sum_{i=1}^{m-j} \mathcal{E}_{j, i}(t)\right).
$$

 Now, we are ready to state the main result.

\begin{theorem} \label{thmain}
Suppose that $\la=1,\mu>2$ or $0<\la<1,\mu>0$. There exists a constant $\e_0$ such that if $\mathcal{E}(0) \leq \e_0,$ then the problem \eqref{eerr} admits a global unique smooth solution in $\mathcal{I} \times[0, \infty)$ satisfying for all $t \geq 0$
$$
\mathcal{E}(t) \leq C \mathcal{E}(0)
$$
and
\be\label{elinfinity}
 \bali
&\sup _{x \in \mathcal{I}}\left\{\sum_{j=0}^{3}(1+t)^{2 j-\dl{\bf 1}_{\la<1}}\left|\partial_{t}^{j} w(x, t)\right|^{2}+\sum_{j=0}^{1}(1+t)^{2 j-\dl{\bf 1}_{\la<1}}\left|\partial_{t}^{j} w_{x}(x, t)\right|^{2}\right\}\\
&\q+\sup _{x \in \mathcal{I}} \sum_{i+j \leq m, 2 i+j \geq 4}(1+t)^{2 j-\dl{\bf 1}_{\la<1}}\left|\sigma^{\frac{2 i+j-3}{2}} \partial_{t}^{j} \partial_{x}^{i} w(x, t)\right|^{2} \leq C \mathcal{E}(0),
\eali
\ee
where $C$ is a positive constant independent of $t$.
\end{theorem}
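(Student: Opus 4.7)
The plan is to follow the framework developed by Luo-Zeng \cite{LZ:2016CPAM} for vacuum free boundary problems, adapted to accommodate the degenerate damping $\mu/(1+t)^\lambda$. Local-in-time existence is provided by the existing theory \cite{CS:2011CPAM,JM:2015CPAM} (the decaying damping is a bounded lower-order perturbation at any fixed $t$), so the core task is a closed a priori estimate $\mathcal{E}(t)\lesssim\mathcal{E}(0)$ under a small bootstrap assumption $\mathcal{E}(t)\leq\epsilon$ on $[0,T]$. Continuity in $t$, together with a choice of $\epsilon_0$ so that $C\epsilon_0<\epsilon/2$, then extends the solution to all $t\geq 0$ and yields the pointwise estimate \eqref{elinfinity} by weighted Sobolev embedding.

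The first step is to establish a weighted linear energy identity for equations of the form
\[
\bar\rho_0 Z_{tt} + \frac{\mu}{(1+t)^\lambda}\bar\rho_0 Z_t + \frac{1}{\gamma}\bigl(\bar\rho_0^\gamma\,\mathcal{L}(Z)\bigr)_x = \mathcal{R},
\]
where $\mathcal{L}(Z)$ is the linearization of $(\tilde\eta_x+w_x)^{-\gamma}-\tilde\eta_x^{-\gamma}$ around $\tilde\eta_x$ and $\mathcal{R}$ collects remainders. Multiplying by $Z_t$ and integrating with the weight $(1+t)^{2j-\delta\mathbf{1}_{\lambda<1}}$ produces differential inequalities for the basic energies $\mathcal{E}_j$. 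The crucial coercive structure is the competition between the damping contribution $\frac{\mu}{(1+t)^\lambda}(1+t)^{2j-\delta\mathbf{1}_{\lambda<1}}\int\bar\rho_0 Z_t^2$ and the growth term obtained when $\frac{d}{dt}$ lands on the time weight. When $\lambda<1$ the damping beats the weight derivative as $t\to\infty$ since $(1+t)^{1-\lambda}\to\infty$, and the $-\delta$ correction provides the extra margin needed at intermediate times; when $\lambda=1$ the two contributions scale identically in $(1+t)$, and bookkeeping of the effective coefficient forces the threshold $\mu>2$, consistent with the cutoff $m\leq [\mu+2/(\gamma+1)]$ built into the top order. Codifying this competition at the linear level is the content of the forthcoming Lemma \ref{lbasice}, and is the main new obstacle relative to the constant-damping case.

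With the model identity in hand, I would proceed by induction on $j=0,1,\ldots,m$. At step $j$, apply $\partial_t^j$ to \eqref{eerr}, set $Z=\partial_t^j w$ in the model lemma, and bound $\mathcal{R}_j$ using the bootstrap and Leibniz-type product estimates; the expansion $(\tilde\eta_x+w_x)^{-\gamma}-\tilde\eta_x^{-\gamma}=-\gamma\tilde\eta_x^{-\gamma-1}w_x+O(w_x^2)$ feeds the principal linear part into the elliptic coefficient and relegates the rest to lower-order contributions absorbed by smallness. Spatial regularity is then recovered by elliptic estimates: viewing the spatial piece as a degenerate second-order operator in $w_x$ with coefficient $\bar\rho_0^\gamma$, one trades one extra space derivative for one extra factor of $\sigma=\bar\rho_0^{\gamma-1}$ and two time derivatives on the source, producing the mixed energies $\mathcal{E}_{j,i}$. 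The weighted Hardy inequality, schematically $\int\sigma^a f^2\lesssim\int\sigma^{a+1}f_x^2+\text{lower order}$ for admissible $a$, is invoked repeatedly to reshuffle $\sigma$-weights between terms with different numbers of spatial derivatives, and is essential both in the energy step and in setting up the elliptic recursion.

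The final stage closes the bootstrap: the pointwise bounds \eqref{elinfinity} follow from weighted Sobolev embeddings on $\mathcal{I}$, which together with the energy bound convert each $\mathcal{E}_{j,i}$ into a $\sigma$-weighted pointwise estimate on $\partial_t^j\partial_x^i w$; these in turn feed back into the nonlinear remainder estimates, so smallness of $\epsilon_0$ closes the loop. The two main obstacles I anticipate are (i) the precise tracking of $(1+t)$-powers in the energy identity so that the degenerate damping remains strong enough to absorb the weight-derivative terms---the threshold $\mu>2$ at $\lambda=1$ is exactly where this competition becomes marginal, matching the blow-up results of \cite{Pan:2016NA} on the complementary side; and (ii) the interplay between the spatial $\sigma$-weights and the time weights in the mixed energies $\mathcal{E}_{j,i}$, since a clumsy division of derivatives between Hardy inequalities and elliptic estimates will forfeit a factor of $(1+t)$ that cannot be recovered.
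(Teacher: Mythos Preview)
Your proposal is correct and follows essentially the same route as the paper: bootstrap continuation from local existence, the basic weighted energy lemma for the linearized equation (Lemma \ref{lbasice}) capturing the damping--weight competition and the $\mu>2$ threshold at $\lambda=1$, induction on $j$ for the higher time-derivative energies, elliptic estimates plus Hardy to recover the mixed $\mathcal{E}_{j,i}$, and weighted Sobolev embedding to close the $L^\infty$ bounds. One small refinement to anticipate is that the basic energy identity requires a \emph{combined} multiplier---both $(1+t)^p Z_t$ and $\nu(1+t)^q Z$ with carefully tuned exponents and small $\nu$---rather than $Z_t$ alone, in order to control the $\int\sigma^\alpha Z^2$ piece of $\mathcal{E}_0$ and to make the damping--weight balance work out.
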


As a corollary of Theorem \ref{thmain}, we have the following theorem for solutions to the original vacuum free boundary problem \eqref{eeed}.

\begin{theorem}\label{thoriginal}
Suppose that $\la=1,\mu>2$ or $0<\la<1,\mu>0$. There exists a constant $\e_0>0$ such that if $\mathcal{E}(0) \leq \e_0,$ then the problem \eqref{eeed} admits a global unique smooth solution $(\rho, u, \mathrm{I}(t))$ for $t \in[0, \infty)$ satisfying
\be\label{eod}
|\rho(\eta(x, t), t)-\bar{\rho}(\bar{\eta}(x, t), t)| \leq C\left(A-B x^{2}\right)^{\frac{1}{\gamma-1}}(1+t)^{-\frac{2(\la+1)}{\gamma+1}+\f{\dl}{2}{\bf1}_{\la<1}},
\ee
\be\label{eov}
|u(\eta(x, t), t)-\bar{u}(\bar{\eta}(x, t), t)| \leq C(1+t)^{\f{\la-\g}{\g+1}},
\ee
\be\label{eob}
 x_{\pm}(t) \thickapprox \pm(1+t)^{\frac{\la+1}{\gamma+1}},
\ee
\be\label{eobd}
\left|\frac{d^{k} x_{\pm}(t)}{d t^{k}}\right| \leq C(1+t)^{\frac{\la+1}{\gamma+1}-k}, \quad k=1,2,3,
\ee
for all $x \in \mathcal{I}$ and $t \geq 0 .$ Here $C$ is a positive constant, depending on $\e_0$ and the upper bound of $h$ but independent of $t$.
\end{theorem}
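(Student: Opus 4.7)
The plan is to deduce Theorem \ref{thoriginal} directly from Theorem \ref{thmain} by unwinding the Lagrangian change of variables; no new analytic input is needed beyond the pointwise bound \eqref{elinfinity}, the ansatz estimates \eqref{ecor3}--\eqref{ecor4}, and the Appendix estimates on the corrector $h$. The key identities I will use are the mass relation \eqref{emass}, $f(x,t)\eta_x(x,t)=\bar\rho_0(x)$, and the analogous identity $\bar\rho(\bar\eta(x,t),t)\bar\eta_x(t)=\bar\rho_0(x)$ for the Barenblatt flow, together with the decomposition $\eta=\bar\eta+xh+w$. For the density bound \eqref{eod}, subtracting these two identities and using the fact that $h$ is independent of $x$ gives
\[
\rho(\eta(x,t),t)-\bar\rho(\bar\eta(x,t),t)=-\bar\rho_0(x)\,\f{h(t)+w_x(x,t)}{\eta_x(x,t)\,\bar\eta_x(t)}.
\]
The $L^\infty$ bound \eqref{elinfinity} with $j=0$ gives $|w_x|\ls\mathcal{E}(0)^{1/2}(1+t)^{(\dl/2){\bf 1}_{\la<1}}$, which is dominated by $\t\eta_x\gtrsim(1+t)^{(\la+1)/(\g+1)}$ once $\mathcal{E}(0)$ is small, because $\dl<2(\la+1)/(\g+1)$. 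Hence $\eta_x\approx\bar\eta_x\approx(1+t)^{(\la+1)/(\g+1)}$, and combining this with the Appendix bound on $|h|$ and the identification $\bar\rho_0(x)=\text{const}\cdot(A-Bx^2)^{1/(\g-1)}$ yields \eqref{eod} at the claimed rate.

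For \eqref{eov}, I would use the defining ODE of the Lagrangian map, $u(\eta(x,t),t)=\eta_t(x,t)$ and $\bar u(\bar\eta(x,t),t)=\bar\eta_t(x,t)$, to obtain $u-\bar u=xh_t(t)+w_t(x,t)$. The $w_t$ contribution is controlled by $(1+t)^{-1+(\dl/2){\bf 1}_{\la<1}}$ via \eqref{elinfinity} with $j=1$, and the Appendix gives $|h_t|\ls(1+t)^{(\la+1)/(\g+1)-1}=(1+t)^{(\la-\g)/(\g+1)}$, which matches the target rate; the restriction $\dl<2(\la+1)/(\g+1)$ lets the $w_t$ piece be absorbed into this rate. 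For \eqref{eob}, the representation
\[
x_\pm(t)=\pm\s{AB^{-1}}\lt[(1+t)^{(\la+1)/(\g+1)}+h(t)\rt]+w(\pm\s{AB^{-1}},t)
\]
exhibits the leading $\pm(1+t)^{(\la+1)/(\g+1)}$ explicitly, with the $h$ and $w$ terms being strictly lower order by the Appendix and \eqref{elinfinity} at $j=0$. The derivative bounds \eqref{eobd} for $k=1,2,3$ follow by differentiating this formula in $t$ and invoking the $\p_t^j w$ estimates from \eqref{elinfinity} together with the $d^k\t\eta_x/dt^k$ bounds in \eqref{ecor3}--\eqref{ecor4}.

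The only points requiring care are that the pointwise estimates hold up to the boundary $x=\pm\s{AB^{-1}}$, which is built into \eqref{elinfinity} since that bound is uniform on $\mathcal{I}$; that $\eta_x>0$ is preserved throughout so the Lagrangian map remains a diffeomorphism, which again uses $|w_x|\ll\t\eta_x$; and that the exponents in \eqref{eod}--\eqref{eobd} are indeed as claimed after the $\dl$-loss, which is immediate from $\dl\in(0,2(\la+1)/(\g+1))$. Accordingly, the main obstacle is not in this corollary itself but already lies in the proof of Theorem \ref{thmain} and in the Appendix work establishing sharp decay of $h$ and its derivatives; once those are in hand, Theorem \ref{thoriginal} is an essentially mechanical translation of the Lagrangian estimates into the Eulerian framework.
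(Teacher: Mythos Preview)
Your proposal is correct and follows essentially the same route as the paper: the same density and velocity identities derived from $\eta=\bar\eta+xh+w$ and \eqref{emass}, the same representation of $x_\pm(t)$, and the same appeal to \eqref{elinfinity}, \eqref{ecor3}--\eqref{ecor4}, and the Appendix bounds on $h$. Your handling of the exponents (in particular the observation that $|h_t|\lesssim(1+t)^{(\la+1)/(\g+1)-1}=(1+t)^{(\la-\g)/(\g+1)}$ dominates the $w_t$ contribution) matches the paper's argument exactly.
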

The pointwise behavior of the density and the convergence of the velocity for the vacuum free boundary problem \eqref{eeed} to that of the modified Barenblatt solution are given by \eqref{eod} and \eqref{eov}, respectively. \eqref{eob} gives the precise expanding rate of the vacuum boundaries, which is the same as that for the modified Barenblatt solution. It is also shown in \eqref{eod} that the difference of density to problem \eqref{eeed} and the corresponding Barenblatt density decays at the rate of $(1+t)^{-[2(\la+1) /(\gamma+1)]^-}$ in $L^{\infty},$ where $a^-$ denotes a constant which is smaller than but close to $a$, while the density of the modified Barenblatt solution, $\bar{\rho},$ decays at the rate of $(1+t)^{-(\la+1) /(\gamma+1)}$ in $L^{\infty}$.

Due to the the finite-time blow up of smooth solutions for \eqref{eerr} with $\la=1,0<\mu\leq2$ or $\la>1,\mu>0$ in \cite{Pan:2016NA,Pan:2016JMAA} under the assumption that the density and the velocity is a small perturbation of $(\bar{\rho},\bar{u})=(1,0)$, we give the following conjecture, which will be considered in our further work.

\begin{conjecture}
Suppose that $\la=1,0<\mu\leq2$ or $\la>1,\mu>0$. The smooth solution of \eqref{eerr} will blow up in finite time for a family of smooth initial data $(w,\p_t w)|_{t=0}$  even if $(w,\p_t w)|_{t=0}$ is sufficiently small.
\end{conjecture}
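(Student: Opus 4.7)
The strategy I would follow is to adapt the characteristic blow-up analysis of the author's earlier works \cite{Pan:2016NA,Pan:2016JMAA} to the present vacuum free-boundary setting, by working relative to the modified Barenblatt background $(\bar\rho,\bar u)$ rather than the constant state $(1,0)$. Passing back from \eqref{eerr} to the Eulerian formulation \eqref{eeed}, I would introduce perturbed Riemann invariants $R_\pm:=(u-\bar u)\pm\int_{\bar\rho}^{\rho}\frac{c(s)}{s}\,ds$, where $c(\rho)=\sqrt{p'(\rho)}$. Along the characteristics $\dot x=u\pm c$, the $R_\pm$ satisfy transport equations whose right-hand side is the sum of the time-decaying damping $-\frac{\mu}{(1+t)^\la}u$ and lower-order forcing coming from the $x$- and $t$-derivatives of the Barenblatt profile. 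Differentiating one more time in $x$ and setting $y:=\partial_x R_+$, one obtains along each characteristic a scalar inequality of Riccati type,
\[
\dot y + \frac{\mu}{(1+t)^\la}\,y \ \geq\ a(t)\,y^2 - b(t),
\]
with $a(t)\gtrsim(1+t)^{-\kappa}>0$ and $b(t)$ integrable on any compact interior subinterval; the exponents $\kappa,\kappa'$ are computed directly from \eqref{epm1} and \eqref{ebvelocity}.

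The second step is the sharp threshold analysis. Multiplying through by the integrating factor $\exp\int_0^t \mu(1+s)^{-\la}\,ds$, one sees that for $\la>1$ the integral is bounded so the damping is asymptotically inactive, while for $\la=1,\ \mu\leq 2$ the factor grows only like $(1+t)^\mu$, not fast enough to prevent the quadratic source $a(t)y^2$ from driving $y$ to $+\infty$ in finite time once $y(0)$ is suitably signed and sufficiently large. The family of blow-up initial data would be constructed as follows: fix a compact sub-interval $\mathcal I'\Subset\mathcal I$ and choose $(w,\partial_t w)|_{t=0}$ whose $\mathcal E(0)$-norm is arbitrarily small but which is shaped so that $\partial_x R_+(\cdot,0)$ is concentrated on $\mathcal I'$ with the sign and magnitude that puts $y(0)$ above the blow-up threshold. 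That such a construction is possible reflects the fact that $\mathcal E(0)$ controls $\partial_x w$ only in a weighted $L^2$ sense and is blind to pointwise derivatives along a single characteristic, so one has room to tune the higher derivatives independently.

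To reconcile the argument with the moving vacuum boundary, I would use finite speed of propagation: the characteristic speed near $\mathcal I'$ is $O(\bar c)=O\bigl((1+t)^{-(\la+1)(\g-1)/(2(\g+1))}\bigr)$, whereas the vacuum boundary recedes like $(1+t)^{(\la+1)/(\g+1)}$, so characteristics issuing from $\mathcal I'$ stay uniformly away from $\partial I(t)$ on any finite time interval, and the Riccati analysis is self-contained in the interior where $\bar\rho_0$ is bounded below. The main obstacle I anticipate is the competition between three scales: the damping decay $(1+t)^{-\la}$, the sound-speed decay inherited from the Barenblatt density, and the genuine nonlinearity. One must verify that the flat-background threshold $(\la,\mu)=(1,2)$ survives in this expanding, inhomogeneous background; a priori the Barenblatt scaling could shift the critical value, and ruling this out requires careful asymptotic bookkeeping of the Riccati ODE. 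A secondary difficulty is ensuring that the small-energy data selected above genuinely produce smooth local-in-time solutions of \eqref{eerr}, so that ``blow-up in finite time'' is meaningful; this should follow from a short-time existence theory parallel to Theorem \ref{thmain} with the global energy bound replaced by a local one.
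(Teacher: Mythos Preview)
The statement you are addressing is labeled a \emph{Conjecture} in the paper, and the paper contains no proof of it. Immediately before stating it, the author writes ``we give the following conjecture, which will be considered in our further work,'' and no argument is supplied thereafter. So there is no ``paper's own proof'' to compare your proposal against; you have written a research outline for an open problem rather than a reconstruction of an existing argument.

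As a research strategy your outline is plausible in broad shape---adapting the Riccati-type characteristic analysis of \cite{Pan:2016NA,Pan:2016JMAA} to a perturbation around the Barenblatt background, and using finite propagation speed to localize away from the vacuum boundary, is a natural line of attack. But several of the claims you make would need real justification before this becomes a proof. In particular: (i) the assertion that small $\mathcal{E}(0)$ leaves room to make $\partial_x R_+(\cdot,0)$ pointwise large is questionable, since $\mathcal{E}(0)$ controls $m\geq 4$ weighted derivatives and, via Lemma~\ref{laprioriinfty}, does give $L^\infty$ control of $w_x$ and $w_{xt}$ on $\mathcal I$; you cannot freely tune interior pointwise derivatives independently of $\mathcal{E}(0)$. (ii) Your claim that the threshold $(\lambda,\mu)=(1,2)$ survives unchanged in the expanding Barenblatt background is precisely the delicate point, and you flag it yourself; without that computation the proposal is a heuristic. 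None of this is a criticism of your reasoning per se---it simply reflects that the paper does not settle the question, and neither does your sketch.
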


\section{Proof of Theorem \ref{thmain}}

At the beginning, we give a weighted Sobolev $L^\i$ embedding Lemma for later use.
\begin{lemma} \label{laprioriinfty}
Suppose that $\mathcal{E}(t)$ is finite, then it holds that
\bes
\bali
&\sup _{x \in \mathcal{I}}\left\{\sum_{j=0}^{3}(1+t)^{2 j-\dl{\bf 1}_{\la<1}}\left|\partial_{t}^{j} w(x, t)\right|^{2}+\sum_{j=0}^{1}(1+t)^{2 j-\dl{\bf 1}_{\la<1}}\left|\partial_{t}^{j} w_{x}(x, t)\right|^{2}\right\}\\
&\q+\sup _{x \in \mathcal{I}} \sum_{i+j \leq m, 2 i+j \geq 4}(1+t)^{2 j-\dl{\bf 1}_{\la<1}}\left|\sigma^{\frac{2 i+j-3}{2}} \partial_{t}^{j} \partial_{x}^{i} w(x, t)\right|^{2} \leq C \mathcal{E}(t).
\eali
\ees
\end{lemma}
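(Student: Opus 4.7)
The plan is to prove this lemma as a weighted one-dimensional Sobolev embedding: I combine the basic inequality
\bes
\sup_{x\in\mathcal{I}}|f(x)|^2 \ls \|f\|^2 + \|f\|\,\|f_x\|
\ees
with a Hardy-type inequality adapted to the degenerate weight $\sigma(x)=A-Bx^2$, which vanishes linearly at both endpoints of $\mathcal{I}$. The time weights $(1+t)^{2j-\dl\mathbf{1}_{\la<1}}$ are multiplicative factors common to both sides of every estimate and carry through unchanged; the essential task is, at a fixed $t$, to bound the displayed pointwise quantities by the spatial part of $\mathcal{E}(t)$.

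For the first block $\sup|\p_t^j w|^2$, $j=0,\ldots,3$, and $\sup|\p_t^j w_x|^2$, $j=0,1$, I would take $f=\p_t^j w$ (respectively $f=\p_t^j w_x$) in the 1D embedding. The resulting unweighted $L^2$-norms are not directly covered by any single $\mathcal{E}_{j,i}$, since every piece of the energy carries a weight $\sigma^{\al+i-1}$ or $\sigma^{\al+i+1}$. I would close this gap by iterating a Hardy inequality of the form
\bes
\int_\mathcal{I}\sigma^{a} g^2\,dx \ls \int_\mathcal{I}\sigma^{a+2}g_x^2\,dx + \|g\|_{L^2(\mathcal{I}')}^2,\q \mathcal{I}'\Subset\mathcal{I},
\ees
which (for appropriate $a$) replaces a $\sigma^{a+2}$ weight on $g_x$ by a $\sigma^a$ weight on $g$. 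Applied with $g=\p_t^j\p_x^{k-1} w$ and chained $k$ times, this yields $\int(\p_t^j w)^2\ls \int\sigma^{2k}(\p_t^j\p_x^k w)^2+\ldots$; once $2k\ge\al+k-1$ the right-hand side is controlled by $\mathcal{E}_{j,k}$, which sits inside $\mathcal{E}(t)$ because $k\le m-j\le 4+[\al]$. The same procedure handles $\p_t^j w_x$ with one additional step. The interior error $\|g\|_{L^2(\mathcal{I}')}^2$ produced at each iteration is absorbed using $\sigma\ge c>0$ on $\mathcal{I}'$.

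For the weighted high-order bounds in the range $i+j\le m,\ 2i+j\ge 4$, I apply the 1D embedding to $F:=\sigma^{(2i+j-3)/2}\p_t^j\p_x^i w$. Direct computation gives
\bes
F_x=\f{2i+j-3}{2}\sigma^{(2i+j-5)/2}\sigma_x\p_t^j\p_x^i w+\sigma^{(2i+j-3)/2}\p_t^j\p_x^{i+1}w,
\ees
and the exponent $(2i+j-3)/2$ is engineered so that $\|F\|^2$ and the two pieces of $\|F_x\|^2$, once balanced against $\sigma\le A$ and, where needed, one additional Hardy step, are all bounded by sums of $\mathcal{E}_{j,i-1}$ and $\mathcal{E}_{j,i}$. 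Summing the resulting pointwise bounds over all admissible pairs $(i,j)$ and carrying the time weight $(1+t)^{2j-\dl\mathbf{1}_{\la<1}}$ yields the stated estimate.

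The main technical obstacle I anticipate is the careful bookkeeping of $\sigma$-exponents when $\al=1/(\g-1)$ is non-integer: the Hardy iteration produces a fractional gap that must be absorbed by interpolating between neighboring $\mathcal{E}_{j,i}$'s in a single step. A minor issue is that the classical Hardy inequality is one-sided, so I would split $\mathcal{I}$ symmetrically into neighborhoods of the two vacuum boundaries $x=\pm\s{A/B}$ and a compact interior, and sum; by the symmetry of $\sigma$ the two boundary halves give identical contributions.
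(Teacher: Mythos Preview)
Your approach is correct and is essentially the one the paper has in mind: the paper omits the proof entirely, pointing to Lemma~3.7 of Luo--Zeng \cite{LZ:2016CPAM}, and your combination of one-dimensional Sobolev embedding with a Hardy iteration in the degenerate weight $\sigma$ is exactly that argument. One stylistic difference: the paper's Preliminaries section records the abstract weighted embedding $H^{a,b}(\mathcal I)\hookrightarrow H^{b-a/2}(\mathcal I)$ (from \cite{KMP:2007}) and the Hardy inequality $\int\sigma^{\theta-2}F^2\lesssim\int\sigma^\theta(F^2+F_x^2)$ for $\theta>1$, and the intended proof simply invokes these ready-made tools rather than iterating your bespoke Hardy step. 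In particular, your version of Hardy with only an interior remainder $\|g\|_{L^2(\mathcal I')}^2$ on the right is slightly imprecise (for $a>-1$ one generally needs the lower-order term $\int\sigma^{a+2}g^2$ as well, since $g$ need not vanish at $\partial\mathcal I$); replacing it by the paper's form makes the chain clean, because each $\int\sigma^{2k}(\partial_t^j\partial_x^\ell w)^2$ with $0\le\ell\le k$ is directly dominated by $\mathcal E_{j,\ell}$ once $k$ exceeds $\alpha$, and $m=4+[\alpha]$ guarantees enough room. With that adjustment your sketch goes through verbatim.
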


The proof of Lemma \ref{laprioriinfty} follows by the same line as that in Lemma 3.7 of \cite{LZ:2016CPAM}. Here we omit the details.

The proof of Theorem \ref{thmain} is based on the local existence of smooth solutions (cf. \cite{CS:2011CPAM,JM:2009CPAM}) and continuation arguments. The uniqueness of smooth solutions can be obtained as in section 11 of \cite{LXZ:2014ARMA}. In order to prove the global existence of smooth solutions, we need to obtain the uniform-in-time a priori estimates on any given time interval $[0, T]$ satisfying $\sup _{t \in[0, T]} \mathcal{E}(t)<\infty .$ To this end, we use a bootstrap argument by making the following a priori assumption: there exists a suitably small fixed positive number $\epsilon_{0} \in(0,1)$ independent of $t$ such that
\be\label{eenergyass}
\sup_{0\leq t\leq T}\mathcal{E}(t)\leq M\e_0
\ee
for some constant $M$, independent of $\e_0$, to be determined later. Under this a priori assumption,
and by using Lemma \ref{laprioriinfty}, we see that
\be\label{esobolev}
\bali
&\sup _{x \in \mathcal{I}}\left\{\sum_{j=0}^{3}(1+t)^{2 j-\dl{\bf 1}_{\la<1}}\left|\partial_{t}^{j} w(x, t)\right|^{2}+\sum_{j=0}^{1}(1+t)^{2 j-\dl{\bf 1}_{\la<1}}\left|\partial_{t}^{j} w_{x}(x, t)\right|^{2}\right\}\\
&\q+\sup _{x \in \mathcal{I}} \sum_{i+j \leq m, 2 i+j \geq 4}(1+t)^{2 j-\dl{\bf 1}_{\la<1}}\left|\sigma^{\frac{2 i+j-3}{2}} \partial_{t}^{j} \partial_{x}^{i} w(x, t)\right|^{2} \leq C M\e_0.
\eali
\ee
Here we can assume that $M\e_0$ is sufficiently small such that $M\e_0\ll 1$. Then we show in subsection \ref{subelliptic} the following elliptic estimates:
\be\label{elleptic}
\mathcal{E}_{j, i}(t) \leq C\sum_{\ell=1}^{i+j} \mathcal{E}_{\ell}(t) \quad \text { when } i,\ j \geq 0,\ i+j \leq m,
\ee
where $C$ is a positive constant independent of $t$.

With \eqref{esobolev} and elliptic estimates \eqref{elleptic}, we show in subsection \ref{subweight} the following nonlinear weighted energy estimate: for some positive constant $C$ independent of $t$
\be\label{eweight}
\mathcal{E}_{j}(t) \leq C \sum_{\ell=0}^{j} \mathcal{E}_{\ell}(0), \quad j=0,1, \ldots, m.
\ee
Combining \eqref{elleptic} and \eqref{eweight}, we see that
\be\label{eweight1}
\mathcal{E}(t) \leq  C_\ast \mathcal{E}(0),
\ee
for some constant $C_\ast$ independent of $t$ and $M$.  By choosing $M=2C_\ast$, we see that
\bes
\mathcal{E}(t) \leq  \f{1}{2}M \e_0,
\ees
which closes energy estimates.
\subsection{Preliminaries}
In this subsection, we present some embedding estimates for weighted Sobolev spaces that will be used later.

 Set
\[
\begin{array}{c}
d(x):=\operatorname{dist}(x, \partial \mathcal{I})=\min \{x+\sqrt{AB^{-1}}, \sqrt{AB^{-1}}-x\}, \\
x \in \mathcal{I}=(-\sqrt{AB^{-1}}, \sqrt{AB^{-1}}).
\end{array}
\]
For any $a>0$ and nonnegative integer $b,$ the weighted Sobolev space $H^{a, b}(\mathcal{I})$ is given by
\[
H^{a, b}(\mathcal{I}):=\left\{d^{a / 2} F \in L^{2}(\mathcal{I}): \int_{\mathcal{I}} d^{a}\left|\partial_{x}^{k} F\right|^{2} d x<\infty, 0 \leq k \leq b\right\}
\]
with the norm
\[
\|F\|_{H^{a, b}(\mathcal{I})}^{2}:=\sum_{k=0}^{b} \int_{\mathcal{I}} d^{a}\left|\partial_{x}^{k} F\right|^{2} d x.
\]
Then for $b \geq a / 2,$ we have the following embedding of weighted Sobolev spaces
(cf. \cite{KMP:2007} ):
\[
H^{a, b}(\mathcal{I}) \hookrightarrow H^{b-a / 2}(\mathcal{I})
\]
with the estimate
\bes
\|F\|_{H^{b-a / 2}(\mathcal{I})} \leq C_{a,b}\|F\|_{H^{a, b}(\mathcal{I})}
\ees
for some positive constant $C_{a,b}$. Obviously, $\sigma(x)$ is equivalent to $d(x)$, that is,
\bes
\s{AB} d(x)\leq \sigma(x)\leq 2\s{AB} d(x).
\ees
So, we have
 \bes
\|F\|_{H^{b-a / 2}(\mathcal{I})} \leq C_{a,b}\sum_{k=0}^{b} \int_{\mathcal{I}} \sigma^{a}\left|\partial_{x}^{k} F\right|^{2} d x.
\ees
 The following general version of the Hardy inequality, whose proof can be found in \cite{KMP:2007}, will also be used frequently in this paper. Let $\th>1$ be a given real number and $F$ be a function satisfying
\[
\int_{0}^{L} x^{\th}\left(F^{2}+F_{x}^{2}\right) d x<\infty,
\]
where $L$ is a positive constant; then it holds that
\[
\int_{0}^{L} x^{\th-2} F^{2} d x \leq C_{\th,L} \int_{0}^{L} x^{\th}\left(F^{2}+F_{x}^{2}\right) d x.
\]
As a consequence, by making a simple variable change, we can show
\be\label{ehardy}
\bali
\int \sigma^{\th-2}F^2dx\thickapprox&\int d^{\th-2} F^2dx\\
                      \leq &\int d^{\th}(F^2+F^2_x)dx\\
                      \thickapprox&\int \sigma^{\th}(F^2+F^2_x) dx.
\eali
\ee

\subsection{Elliptic Estimates}\label{subelliptic}

We prove the following elliptic estimates in this subsection.
\begin{proposition}\label{pelliptic}
Under the assumption of \eqref{eenergyass} for suitably small positive number $\epsilon_0\in (0,1)$, then for $0 \leq t \leq T$, we have
\bes
 \mathcal{E}_{j, i}(t) \lesssim \sum_{\ell=0}^{i+j} \mathcal{E}_{\ell}(t) \quad \text { when } i,\ j \geq 0,\ i+j \leq m.
\ees
\end{proposition}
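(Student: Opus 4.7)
\bigskip

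\noindent\textbf{Proof plan for Proposition \ref{pelliptic}.}
My plan is to prove the estimate by induction on the spatial derivative order $i$, keeping $i+j\le m$ fixed, and reading \eqref{eerr} as a degenerate elliptic equation for $w$. The base case $i=0$ is essentially the definition of $\mathcal{E}_{j}$: after applying the Hardy inequality \eqref{ehardy} to the term $\sigma^{\alpha-1}(\partial_t^{j} w)^{2}$ (noting $\alpha+1>1$), the quantity $\mathcal{E}_{j,0}(t)$ is controlled by $\mathcal{E}_{j}(t)$ up to the extra $\sigma^{\alpha+1}(\partial_t^{j+1} w)^2$ time-weighted piece, which is already built into $\mathcal{E}_{j}$.

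For the inductive step, I would rewrite $\eqref{eerr}_1$ in the form
\bes
\frac{1}{\gamma}\bigl[\sigma^{\alpha+1}\bigl((\tilde{\eta}_x+w_x)^{-\gamma}-\tilde{\eta}_x^{-\gamma}\bigr)\bigr]_{x}
= -\sigma^{\alpha} w_{tt} - \frac{\mu}{(1+t)^\la}\sigma^{\alpha} w_{t},
\ees
then expand the bracket using $(\tilde{\eta}_x+w_x)^{-\gamma-1}$ as the leading Jacobian factor to isolate the principal part $-\sigma^{\alpha+1}(\tilde\eta_x+w_x)^{-\gamma-1} w_{xx}$. Applying $\partial_t^{j}\partial_x^{i-1}$ to this identity, invoking Leibniz on $\sigma^{\alpha+1}$ (whose derivatives behave like $\sigma^{\alpha}\sigma_x$ since $\sigma_x=-2Bx$ is smooth), and solving algebraically for $\partial_t^{j}\partial_x^{i+1}w$ produces a pointwise identity of the schematic form
\bes
\sigma^{\alpha+1}\,\partial_t^{j}\partial_x^{i+1}w = (\tilde{\eta}_x+w_x)^{\gamma+1}\Bigl[\sigma^{\alpha}\bigl(\partial_t^{j+2}w+\tfrac{\mu}{(1+t)^\la}\partial_t^{j+1}w\bigr) + \mathcal{R}_{j,i}\Bigr],
\ees
where $\mathcal{R}_{j,i}$ collects commutator and nonlinear terms involving $\partial_t^{a}\partial_x^{b}w$ with $a\le j$, $b\le i$ and spatial weight $\sigma^{\alpha+b}$ (with the weight power decreased by one each time a spatial derivative falls on $\sigma^{\alpha+1}$, exactly matching the structure of $\mathcal{E}_{j,b}$).

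Squaring, multiplying by the correct time weight $(1+t)^{2j-\delta{\bf 1}_{\la<1}}$ and integrating over $\mathcal{I}$, the leading term yields $\int \sigma^{\alpha+i+1}(\partial_t^{j}\partial_x^{i+1}w)^{2}\,dx$ bounded above by $\mathcal{E}_{j+1}(t)$ (from the $\partial_t^{j+2}w$ and $\partial_t^{j+1}w$ source; the time weights match because $\mathcal{E}_{j+1}$ carries weight $(1+t)^{2j+2-\delta{\bf 1}_{\la<1}}$ precisely compensating the loss of two powers of $(1+t)$ from the $(\tilde\eta_x+w_x)^{\gamma+1}\sim (1+t)^{(\la+1)}$ factor) plus nonlinear remainders. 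The $\sigma^{\alpha+i-1}(\partial_t^{j}\partial_x^{i}w)^{2}$ part of $\mathcal{E}_{j,i}$ is then recovered from the $\sigma^{\alpha+i+1}(\partial_t^{j}\partial_x^{i+1}w)^{2}$ bound by a single application of the Hardy inequality \eqref{ehardy} with $\theta=\alpha+i+1>1$. Finally, the nonlinear remainders in $\mathcal{R}_{j,i}$ are estimated by peeling off the highest-order factor in $L^{2}$ (absorbed into lower-index $\mathcal{E}_{j',i'}$ with $i'+j'\le i+j-1$, which is inductively controlled by $\sum_{\ell\le i+j-1}\mathcal{E}_{\ell}$) and the remaining factors in weighted $L^{\infty}$ via \eqref{esobolev}.

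The main obstacle I expect is bookkeeping in the nonlinear remainder $\mathcal{R}_{j,i}$: the Faà di Bruno expansion of $\partial_t^{j}\partial_x^{i-1}\!\bigl[(\tilde\eta_x+w_x)^{-\gamma}-\tilde\eta_x^{-\gamma}\bigr]$ produces many products of $\partial_t^{a}\partial_x^{b}w$, and one has to verify that every such product carries a spatial weight compatible with $\sigma^{\alpha+i+1}$ on the left and a time weight compatible with $(1+t)^{2j-\delta{\bf 1}_{\la<1}}$. The small parameter $M\epsilon_0\ll 1$ from \eqref{eenergyass}--\eqref{esobolev} is what ensures $(\tilde\eta_x+w_x)^{\gamma+1}$ remains comparable to $\tilde\eta_x^{\gamma+1}\sim (1+t)^{\la+1}$ (so the Lagrangian map is non-degenerate) and absorbs any nonlinear factor with one extra $w$-derivative into the constants. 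Once this accounting is done, summing the induction on $i$ from $0$ to $m-j$ and then over $j$ yields the claimed bound $\mathcal{E}_{j,i}(t)\lesssim \sum_{\ell=1}^{i+j}\mathcal{E}_\ell(t)$.
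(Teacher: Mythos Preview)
Your overall architecture (elliptic reading of \eqref{eerr}, differentiation by $\partial_t^j\partial_x^{i-1}$, induction, Hardy) matches the paper, but there is a genuine gap in the core step. After applying $\partial_t^j\partial_x^{i-1}$, the principal part of the operator is not $\sigma\,\partial_t^j\partial_x^{i+1}w$ alone but
\[
\tilde\eta_x^{-\gamma-1}\Bigl[\sigma\,\partial_t^j\partial_x^{i+1}w+(\alpha+i)\,\sigma_x\,\partial_t^j\partial_x^{i}w\Bigr].
\]
If you move the first-order piece $(\alpha+i)\sigma_x\partial_t^j\partial_x^{i}w$ to the right and square, its weighted $L^2$ norm is $\int\sigma^{\alpha+i-1}(\partial_t^j\partial_x^{i}w)^2$, which is precisely the second half of $\mathcal{E}_{j,i}$ that you then want to recover via Hardy. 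But Hardy bounds $\int\sigma^{\alpha+i-1}(\partial_t^j\partial_x^{i}w)^2\lesssim \int\sigma^{\alpha+i+1}(\partial_t^j\partial_x^{i+1}w)^2+\text{(lower)}$, so you end up with $\|\sigma^{(\alpha+i+1)/2}\partial_t^j\partial_x^{i+1}w\|^2$ on both sides with an $O(1)$ constant and no smallness to absorb. The paper avoids this by \emph{not} separating the two pieces: it computes $\bigl\|\sigma^{(\alpha+i+1)/2}\partial_t^j\partial_x^{i+1}w+(\alpha+i)\sigma^{(\alpha+i-1)/2}\sigma_x\partial_t^j\partial_x^{i}w\bigr\|^2$ directly, integrates the cross term by parts, and uses the crucial sign $\sigma_{xx}=-2B<0$ to obtain a coercivity identity equivalent to $\mathcal{E}_{j,i}$ (see \eqref{eelliptic2}). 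This algebraic cancellation is the missing idea in your plan.

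There is also a bookkeeping error in the source: for $i\ge 2$ the right-hand side after differentiating is $\partial_t^{j+2}\partial_x^{i-1}w$, not $\partial_t^{j+2}w$, and with weight $\sigma^{(\alpha+i-1)/2}$ it sits in $\mathcal{E}_{j+2,i-2}$, not in $\mathcal{E}_{j+1}$. Since $(j+2)+(i-2)=i+j$, this source is at the \emph{same} total order you are trying to control, so a plain induction on $i$ does not suffice. The paper inducts on $i+j$ and, within the level $i+j=k+1$, runs $i$ from $1$ upward so that the source $\mathcal{E}_{j+2,i-2}$ has just been bounded at the previous step; see \eqref{ehelliptic4} and the paragraph that follows it.
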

The proof of this proposition consists of Lemma \ref{llowee} and Lemma \ref{lhighe} below.

\noindent\textbf{Lower-Order Elliptic Estimates}

 Equation $\eqref{eerr}_1$ can be rewritten as
\bes
\bali
\t{\eta}^{-\g-1}_x\left(\bar{\rho}_{0}^{\gamma} w_{x}\right)_{x}&=\bar{\rho}_{0} w_{t t}+\f{\mu}{(1+t)^\la}\bar{\rho}_{0} w_{t}\\
&+\f{1}{\g}\left[\bar{\rho}_{0}^{\gamma}\left(\left(\t{\eta}_x+w_{x}\right)^{-\gamma}-\t{\eta}^{-\g}_x+\gamma \t{\eta}^{-\g-1}_xw_{x}\right)\right]_{x}.
\eali
\ees
Divide the equation above by $\bar{\rho}_{0}$ and expand the resulting equation to obtain
\be\label{eelliptic}
\begin{aligned}
& \t{\eta}^{-\g-1}_x\lt[\sigma w_{x x}+(1+\al) \sigma_x w_{x}\rt] \\
=&w_{t t}+\f{\mu}{(1+t)^\la}w_{t}-\sigma\left[\left(\t{\eta}_x+w_{x}\right)^{-\gamma-1}-\t{\eta}^{-\g-1}_x\right] w_{x x}\\
&+\al\sigma_x \left[\left(\t{\eta}_x+w_{x}\right)^{-\gamma}-\t{\eta}^{-\g}_x+\gamma  \t{\eta}^{-\g-1}_x w_{x}\right].
\end{aligned}
\ee
\begin{lemma} \label{llowee}
Under the assumption of \eqref{eenergyass} for suitably small positive number $\epsilon_0\in (0,1)$. Then
\bes
\mathcal{E}_{0,0}(t)+\mathcal{E}_{1,0}(t)+\mathcal{E}_{0,1}(t) \lesssim \mathcal{E}_{0}(t)+\mathcal{E}_{1}(t),\ 0 \leq t \leq T.
\ees
\end{lemma}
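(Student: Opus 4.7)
The plan is to prove the three bounds $\mathcal{E}_{0,0}\lesssim\mathcal{E}_0$, $\mathcal{E}_{1,0}\lesssim\mathcal{E}_1$, and $\mathcal{E}_{0,1}\lesssim\mathcal{E}_0+\mathcal{E}_1$ separately. The first two are essentially Hardy. For $\mathcal{E}_{j,0}$ ($j=0,1$), the integrand $\sigma^{\alpha+1}(\partial_t^j w_x)^2$ is already a piece of $\mathcal{E}_j$, while the $\sigma^{\alpha-1}(\partial_t^j w)^2$ piece is controlled via \eqref{ehardy} with $\theta=\alpha+1>1$ applied to $F=\partial_t^j w$: this yields $\int\sigma^{\alpha-1}(\partial_t^j w)^2\,dx\lesssim\int\sigma^{\alpha+1}[(\partial_t^j w)^2+(\partial_t^j w_x)^2]\,dx$, the $(\partial_t^j w)^2$ part being further bounded using $\sigma\le\sigma_{\max}$. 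The time weights $(1+t)^{2j-\delta\mathbf{1}_{\lambda<1}}$ match on both sides, so these two estimates are immediate.

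The main work is in the $\mathcal{E}_{0,1}$ bound. The crucial step is the integral identity
$$\int\sigma^\alpha\bigl[\sigma w_{xx}+(\alpha+1)\sigma_x w_x\bigr]^2 dx=\int\sigma^{\alpha+2}w_{xx}^2\,dx+2B(\alpha+1)\int\sigma^{\alpha+1}w_x^2\,dx,$$
which I will derive by expanding the square, integrating the cross term $2(\alpha+1)\int\sigma^{\alpha+1}\sigma_x w_x w_{xx}\,dx=(\alpha+1)\int\sigma^{\alpha+1}\sigma_x(w_x^2)_x\,dx$ by parts (boundary terms vanish because $\sigma|_{\partial\mathcal{I}}=0$), and exploiting the exact cancellation of the two $(\alpha+1)^2\int\sigma^\alpha\sigma_x^2 w_x^2\,dx$ contributions---one from the square, one from the IBP through $(\sigma^{\alpha+1}\sigma_x)_x=(\alpha+1)\sigma^\alpha\sigma_x^2+\sigma^{\alpha+1}\sigma_{xx}$ with $\sigma_{xx}=-2B$. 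Plugging in $\sigma w_{xx}+(\alpha+1)\sigma_x w_x=\tilde\eta_x^{\gamma+1}[w_{tt}+\mu(1+t)^{-\lambda}w_t+\mathcal{N}]$ from \eqref{eelliptic}, and bounding the nonlinear residue $\mathcal{N}$ by Taylor expansion and \eqref{esobolev} as $|\tilde\eta_x^{\gamma+1}\mathcal{N}|\lesssim(M\epsilon_0)^{1/2}\tilde\eta_x^{-1}(\sigma|w_{xx}|+|\sigma_x||w_x|)$, after absorbing the small $w_{xx}^2$ contribution one obtains
$$\int\sigma^{\alpha+2}w_{xx}^2\,dx\lesssim\tilde\eta_x^{2(\gamma+1)}\int\sigma^\alpha\bigl[w_{tt}^2+(1+t)^{-2\lambda}w_t^2\bigr]dx+\int\sigma^{\alpha+1}w_x^2\,dx.$$
The $\sigma^\alpha w_x^2$ piece of $\mathcal{E}_{0,1}$ is then captured by Hardy \eqref{ehardy} with $\theta=\alpha+2$: $\int\sigma^\alpha w_x^2\,dx\lesssim\int\sigma^{\alpha+2}w_{xx}^2\,dx+\int\sigma^{\alpha+1}w_x^2\,dx$.

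Multiplying through by the time weight $(1+t)^{-\delta\mathbf{1}_{\lambda<1}}$ and using $\tilde\eta_x^{2(\gamma+1)}\approx(1+t)^{2(\lambda+1)}$ from \eqref{ecor3}--\eqref{ecor4}, I will match powers as follows: the $w_{tt}^2$ contribution scales as $(1+t)^{2(\lambda+1)-\delta\mathbf{1}_{\lambda<1}}\int\sigma^\alpha w_{tt}^2\,dx$, which fits into $\mathcal{E}_1$'s $(1+t)^{\lambda+3-\delta\mathbf{1}_{\lambda<1}}\int\sigma^\alpha w_{tt}^2\,dx$ precisely because $0<\lambda\le1$ forces $2(\lambda+1)\le\lambda+3$; the $w_t^2$ contribution gives exactly $(1+t)^{2-\delta\mathbf{1}_{\lambda<1}}\int\sigma^\alpha w_t^2\,dx\le\mathcal{E}_1$; and the leftover $\sigma^{\alpha+1}w_x^2$ piece belongs to $\mathcal{E}_0$.

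The main obstacle is establishing the integral identity in the second paragraph: a naive approach (squaring $\sigma w_{xx}=\tilde\eta_x^{\gamma+1}[\mathrm{RHS}]-(\alpha+1)\sigma_x w_x$ directly) produces $\int\sigma^{\alpha+2}w_{xx}^2\lesssim\int\sigma^\alpha\sigma_x^2 w_x^2+\cdots\lesssim\int\sigma^\alpha w_x^2+\cdots$, which combined with the reverse Hardy bound yields a circular inequality whose constants are not small and cannot be unwound. The IBP cancellation eliminates the offending $\sigma^\alpha\sigma_x^2 w_x^2$ term entirely, and that is what allows the estimate to close.
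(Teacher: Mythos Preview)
Your proposal is correct and follows essentially the same route as the paper: Hardy \eqref{ehardy} for $\mathcal{E}_{0,0}$ and $\mathcal{E}_{1,0}$, and for $\mathcal{E}_{0,1}$ the same integration-by-parts identity on $\int\sigma^\alpha[\sigma w_{xx}+(\alpha+1)\sigma_x w_x]^2\,dx$ (exploiting $\sigma_{xx}=-2B$ and the exact cancellation of the $\sigma^\alpha\sigma_x^2 w_x^2$ terms), combined with Taylor expansion and the smallness from \eqref{esobolev} to absorb the nonlinear remainder. The only cosmetic difference is that you recover the $\int\sigma^\alpha w_x^2$ piece of $\mathcal{E}_{0,1}$ via an extra Hardy step, whereas the paper reads it off directly from the identity; your time-weight matching (using $2(\lambda+1)\le\lambda+3$ for $\lambda\le 1$) is exactly what the paper encodes in the line ``$\leq C(1+t)^{\delta\mathbf{1}_{\lambda<1}}\mathcal{E}_1$''.
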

\begin{proof}
When $i=0$, we using  \eqref{ehardy} to see that
\bes
\bali
\mathcal{E}_{j,0}(t)&:= (1+t)^{2j-\dl{\bf 1}_{\la<1}}\int\left[\sigma^{\alpha+1}(\partial_{t}^{j} w_x)^{2}+\sigma^{\alpha-1}(\partial_{t}^{j} w)^{2}\right](x, t) d x\\
            &\ls (1+t)^{2j-\dl{\bf 1}_{\la<1}}\int\sigma^{\alpha+1}\lt[(\partial_{t}^{j} w_x)^{2}+(\partial_{t}^{j} w)^{2}\rt]dx\\
            &\ls (1+t)^{2j-\dl{\bf 1}_{\la<1}}\int\left[\sigma^{\alpha+1}(\partial_{t}^{j} w_x)^{2}+\sigma^\al(\partial_{t}^{j} w)^{2}\rt]dx\\
            &\ls \mathcal{E}_{j}(t),
\eali
\ees
where at the last but second line, we have used the fact that $|\sigma|\leq A$. This implies that $\mathcal{E}_{0,0}(t)+\mathcal{E}_{1,0}(t) \lesssim \mathcal{E}_{0}(t)+\mathcal{E}_{1}(t)$. We mainly focus on the the proof of $\mathcal{E}_{0,1}(t) \lesssim \mathcal{E}_{0}(t)+\mathcal{E}_{1}(t).$

Remembering that $\t{\eta}_x\thickapprox (1+t)^{\f{\la+1}{\g+1}}$, multiply equation \eqref{eelliptic} by $\t{\eta}^{\g+1}_x\sigma^{\alpha / 2}$ and perform the spatial $L^2$-norm to obtain
\be\label{eelliptic1}
\bali
&\left\|\sigma^{1+\frac{\alpha}{2}} w_{x x}+(1+\al) \sigma^{\frac{\alpha}{2}}\sigma_x w_{x}\right\|^{2} \\
&\leq C \left((1+t)^{2(\la+1)}\left\|{\sigma}^{\frac{\alpha}{2}} w_{t t}\right\|^{2}+(1+t)^{2}\left\|{\sigma}^{\frac{\alpha}{2}} w_{t}\right\|^{2}\right)\\
&+C(1+t)^{2(\la+1)}\lt(\t{\eta}^{-2}_x\left\|{\sigma}^{1+\frac{\alpha}{2}} w_{x} w_{x x}\right\|^{2}+\t{\eta}^{-2}_x\left\|{\sigma}^{\frac{\alpha}{2}} \sigma_x w_{x}^{2}\right\|^{2}\rt)\\
&\leq C (1+t)^{\dl{\bf1}_{\la<1}}\mathcal{E}_{1}\\
&+C\left\|w_{x}\right\|_{L^{\infty}}^{2}\t{\eta}^{-2}_x\left(\left\|{\sigma}^{1+\frac{\alpha}{2}} w_{x x}\right\|^{2}+\left\|\sigma^{\frac{\alpha}{2}} \sigma_x w_{x}\right\|^{2}\right),
\eali
\ee
where we have used the Taylor expansion, the smallness of $\t{\eta}^{-1}_xw_{x}$ (which is the consequence of \eqref{esobolev}) to derive the first inequality and the definition of $\mathcal{E}_{1}$ to the second. Note that by integration by parts, the left-hand side of \eqref{eelliptic1} can be expanded as
\be\label{eelliptic2}
\bali
&\left\|\sigma^{1+\frac{\alpha}{2}} w_{x x}+(1+\al) \sigma^{\frac{\alpha}{2}}\sigma_x w_{x}\right\|^{2} \\
=&\left\|{\sigma}^{1+\frac{\alpha}{2}} w_{x x}\right\|^{2}+(1+\al)^{2}\left\|{\sigma^{\f{\al}{2}}}\sigma_x w_{x}\right\|^{2}+(1+\al) \int \sigma^{1+\alpha} \sigma_x\left(w_{x}^{2}\right)_{x} d x \\
=&\left\|{\sigma}^{1+\frac{\alpha}{2}} w_{x x}\right\|^{2}-(1+\al) \int \sigma^\alpha\sigma_{xx} w_{x}^{2} d x\\
=&\left\|{\sigma}^{1+\frac{\alpha}{2}} w_{x x}\right\|^{2}+2(1+\al)B \int \sigma^\alpha w_{x}^{2} d x\\
\thickapprox& (1+t)^{\dl{\bf1}_{\la<1}}\mathcal{E}_{0,1}.
\eali
\ee
At the fourth line of the above inequality, we have used the fact that $\sigma_{xx}=-2B$.

By combining \eqref{eelliptic1} and \eqref{eelliptic2}, we get
\be\label{eelliptic3}
\bali
&\mathcal{E}_{0,1}\leq C\mathcal{E}_{1}+C\e_0\t{\eta}^{-2}_x\left(\left\|{\sigma}^{1+\frac{\alpha}{2}} w_{x x}\right\|^{2}+\left\|\sigma^{\frac{\alpha}{2}} \sigma_x w_{x}\right\|^{2}\right),
\eali
\ee
where we have used \eqref{esobolev} to estimate that $\|w_{x}\|_{L^{\infty}}^{2}\ls \e_0(1+t)^{\dl{\bf1}_{\la<1}}$.
On the other hand, by using \eqref{eelliptic2}
\bes
\begin{aligned}
&\left\|(\al+1)\sigma^{\f{\al}{2}}\sigma_x w_{x}\right\|^{2} =\left\|\sigma^{1+\frac{\alpha}{2}} w_{x x}+(\al+1) \sigma_x\sigma^{\frac{\alpha}{2}} w_{x}-\sigma^{1+\frac{\alpha}{2}} w_{x x}\right\|^{2} \\
\leq& 2\left\|\sigma^{1+\frac{\alpha}{2}} w_{x x}+(\al+1)\sigma_x \sigma^{\frac{\alpha}{2}} w_{x}\right\|^{2}+2\left\|\sigma^{1+\frac{\alpha}{2}}  w_{x x}\right\|^{2} \\
\ls& (1+t)^{\dl{\bf1}_{\la<1}}\mathcal{E}_{0,1}.
\end{aligned}
\ees
This, together with \eqref{eelliptic3}, gives
\bes
\bali
\mathcal{E}_{0,1}\ls& \mathcal{E}_{1}+\e_0(1+t)^{\dl{\bf1}_{\la<1}}\t{\eta}^{-2}_x\mathcal{E}_{0,1}\\
                    \ls& \mathcal{E}_{1}+\e_0(1+t)^{(1+t)^{\dl{\bf1}_{\la<1}-\f{2(\la+1)}{\g+1}}}\mathcal{E}_{0,1}\\
                    \ls& \mathcal{E}_{1}+\e_0\mathcal{E}_{0,1},
\eali
\ees
where we have choose $\dl\in (0,\f{2(\la+1)}{\g+1})$.
This implies, with the aid of the smallness of $\e_0$, that
\bes
\mathcal{E}_{0,1} \leq C \mathcal{E}_{1}(t).
\ees
\end{proof}
\noindent {\bf Higher-Order Elliptic Estimates}

For $i\geq 1$ and $j\geq 0,$ applying $\p^j_t\p^{i-1}_x$ to \eqref{eelliptic} yields that
\be\label{ehelliptic}
\bali
&\t{\eta}^{-\g-1}_x\lt[\sigma\p^j_t\p^{i+1}_x w+\lt(\al+i\rt)\sigma_x\p^j_t\p^{i}_x w\rt]\\
=&\p^{j+2}_t\p^{i-1}_x w+\f{\mu}{(1+t)^\la}\p^{j+1}_t\p^{i-1}_x w + Q_1+Q_2+Q_3,
\eali
\ee

where
\be\label{ehelliptic5}
\begin{aligned}
&Q_{1}:=- \sum_{\ell=1}^{j}\left[\partial_{t}^{\ell}\left(\tilde{\eta}_{x}^{-\gamma-1}\right)\right] \partial_{t}^{j-\ell}\left[\sigma \partial_{x}^{i+1} w+(\alpha+i)\sigma_x \partial_{x}^{i} w\right]\\
&\quad- \partial_{t}^{j}\left\{\tilde{\eta}_{x}^{-\gamma-1}\left[\sum_{\ell=2}^{i-1} C_{i-1}^{\ell}\left(\partial_{x}^{\ell} \sigma\right)\left(\partial_{x}^{i+1-\ell} w\right)\rt.\rt.\\
&\qq\qq\lt.\lt.+(\alpha+1) \sum_{\ell=1}^{i-1} C_{i-1}^{\ell}\left(\partial_{x}^{\ell+1} \sigma\right)\left(\partial_{x}^{i-\ell} w\right)\right]\right\}, \\
&Q_{2}:=- \partial_{t}^{j} \partial_{x}^{i-1}\left\{\sigma\left[\left(\tilde{\eta}_{x}+w_{x}\right)^{-\gamma-1}-\tilde{\eta}_{x}^{\gamma-1}\right] w_{x x}\right\} \\
&\qq +\alpha \partial_{t}^{j} \partial_{x}^{i-1}\left\{\sigma_{x}\left[\left(\tilde{\eta}_{x}+w_{x}\right)^{-\gamma}-\tilde{\eta}_{x}^{-\gamma}+\gamma \tilde{\eta}_{x}^{-\gamma-1} w_{x}\right]\right\},\\
&Q_3:=\mu \sum_{\ell=1}^j C_j^\ell \p^\ell_t(1+t)^{-\la}\p^{j+1-\ell}_t w.
\end{aligned}
\ee
 Summations $\sum_{\ell=1}^{i-1}$ and $\sum_{\ell=2}^{i-1}$ should be understood to be 0 when $i=1$ and $i=1,2,$ respectively. Multiply equation \eqref{ehelliptic} by $\tilde{\eta}_{x}^{\gamma+1} \sigma^{(\alpha+i-1) / 2}$ and perform the spatial $L^{2}$ -norm of the product to give
\bes
\bali
&\left\|\sigma^{\frac{\alpha+i+1}{2}} \partial_{t}^{j} \partial_{x}^{i+1} w+(\alpha+i) \sigma^{\frac{\alpha+i-1}{2}} \sigma_x \partial_{t}^{j} \partial_{x}^{i} w\right\|^{2} \\
\leq&(1+t)^{2(\la+1)}\left\|\sigma^{\frac{\alpha+i-1}{2}} \partial_{t}^{j+2} \partial_{x}^{i-1} w\right\|^{2}+(1+t)^{2}\left\|\sigma^{\frac{\alpha+i-1}{2}} \partial_{t}^{j+1} \partial_{x}^{i-1} w\right\|^{2} \\
&+(1+t)^{2(\la+1)}\left\|\sigma^{\frac{\alpha+i-1}{2}} (Q_{1},Q_{2},Q_{3}) \right\|^{2}.
\eali
\ees
Similar to the derivation of \eqref{eelliptic2}, we can then obtain
\be\label{ehelliptic7}
\bali
&(1+t)^{-2 j+\dl{\bf1}_{\la<1}}\mathcal{E}_{j,i} \\
\leq&(1+t)^{2(\la+1)}\left\|\sigma^{\frac{\alpha+i-1}{2}} \partial_{t}^{j+2} \partial_{x}^{i-1} w\right\|^{2}+(1+t)^{2}\left\|\sigma^{\frac{\alpha+i-1}{2}} \partial_{t}^{j+1} \partial_{x}^{i-1} w\right\|^{2} \\
&+(1+t)^{2(\la+1)}\left\|\sigma^{\frac{\alpha+i-1}{2}} (Q_{1},Q_{2},Q_{3}) \right\|^{2}.
\eali
\ee

We will use this estimate to prove the following lemma by mathematical induction.


\begin{lemma}\label{lhighe}
Under the assumption of \eqref{eenergyass} for suitably small positive number $\epsilon_0\in (0,1)$. Then for $j \geq 0, i \geq 1,$ and $0 \leq i+j \leq m$
\be\label{ehelliptic2}
\mathcal{E}_{j, i}(t) \lesssim \sum_{\ell=0}^{i+j} \mathcal{E}_{\ell}(t), \quad t \in[0, T].
\ee
\end{lemma}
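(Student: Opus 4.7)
The plan is a double induction built on top of the differentiated identity \eqref{ehelliptic7} and the base cases already available. The outer induction is on the spatial order $i$, and the inner induction is on the temporal order $j$. The case $i=0$ is actually a consequence of the Hardy-type argument used at the start of the proof of Lemma \ref{llowee}: that argument is $j$-independent and so yields $\mathcal{E}_{j,0}\lesssim \mathcal{E}_{j}$ for every admissible $j$; the case $(i,j)=(1,0)$ is exactly Lemma \ref{llowee}. Thereafter, to treat $(j,i)$ with $i\ge 1$ and $i+j\le m$, I would assume the estimate for all $(j',i')$ with $i'<i$ and for all $(j',i)$ with $j'<j$.

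Multiplying \eqref{ehelliptic7} by $(1+t)^{2j-\dl{\bf 1}_{\la<1}}$ turns the left-hand side into (something equivalent to) $\mathcal{E}_{j,i}$. The two linear remainders carry one fewer spatial derivative than appears on the left, and so they fall either under the outer induction (when $i\ge 2$) or directly into the pure-time energies (when $i=1$). More precisely, $\|\sigma^{(\al+i-1)/2}\p_{t}^{j+2}\p_{x}^{i-1}w\|^{2}$ and $\|\sigma^{(\al+i-1)/2}\p_{t}^{j+1}\p_{x}^{i-1}w\|^{2}$ are recognized, for $i\ge 2$, as the highest-order spatial components of $\mathcal{E}_{j+2,i-2}$ and $\mathcal{E}_{j+1,i-2}$, which sit at spatial level $i-2$ and are already known by the outer hypothesis; for $i=1$ they are controlled by the third and first pieces of $\mathcal{E}_{j+1}$. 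A direct exponent count shows that the outer factors $(1+t)^{2(\la+1)}$ and $(1+t)^{2}$ are compensated by the time weights already sitting inside the $\mathcal{E}$'s, leaving residuals $(1+t)^{2\la-2}$, $(1+t)^{\la-1}$ or $(1+t)^{0}$, each of which is harmless under $\la\le 1$.

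The commutator remainders $Q_{1}$ and $Q_{3}$ are handled after distributing derivatives: each time derivative landing on $\t\eta_{x}^{-\g-1}$ or on $(1+t)^{-\la}$ produces, via \eqref{ecor3}-\eqref{ecor4}, a negative power of $(1+t)$ which exactly compensates the outer $(1+t)^{2(\la+1)}$, so the surviving derivative on $w$ carries strictly fewer than $j+2$ time derivatives and at most $i+1$ spatial derivatives. Each resulting contribution then lies either in the inner induction (same $i$, strictly smaller $j$) or in the outer induction (strictly smaller $i$), and the bound by $\sum_{\ell\le i+j}\mathcal{E}_{\ell}$ follows. At $\la=1$ several exponents become critical, but the elliptic step does not use the lower bound $\mu>2$, which enters only later in subsection \ref{subweight}.

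The one essentially nonlinear piece is $Q_{2}$. Expanding $(\t\eta_{x}+w_{x})^{-\g-1}-\t\eta_{x}^{-\g-1}$ and its $-\g$ analogue by Taylor's formula and then distributing $\p_{t}^{j}\p_{x}^{i-1}$ turns $Q_{2}$ into a finite sum of multilinear products of derivatives of $w$ of total time order $j$ and total spatial order at most $i+1$. In each product I would place one factor in $L^{\infty}$ using the weighted Sobolev/Hardy bound \eqref{esobolev}, producing a factor $\sqrt{M\e_{0}}$ with the correct time decay, and estimate the remaining factor in $L^{2}$ with the leftover $\sigma$-weight. That $L^{2}$ factor either matches $\mathcal{E}_{j,i}$ itself (absorbed into the left-hand side thanks to $M\e_{0}\ll 1$) or is strictly below the induction threshold (handled by hypothesis). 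The main obstacle of the whole proof is precisely this combinatorial bookkeeping: one must check, for every distribution of the $j$ time and $i-1$ spatial derivatives across the product, that the $\sigma$-weights and $(1+t)$-weights balance so that no term grows in $t$ and every nonlinear piece is genuinely $\e_{0}$-small. The point of the definition of $\mathcal{E}(t)$ and of \eqref{esobolev} is that this bookkeeping always works out with room to spare.
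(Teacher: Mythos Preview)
Your overall strategy is sound and matches the paper in spirit: one differentiates the elliptic identity, uses \eqref{ehelliptic7}, bounds the linear remainders by energies at lower spatial order, and controls $Q_1,Q_2,Q_3$ by putting one factor in $L^\infty$ via \eqref{esobolev}. The difficulty is that your induction scheme does not close.

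You run an outer induction on the spatial order $i$ and an inner induction on $j$; that is, when treating $(j,i)$ you assume the bound for all $(j',i')$ with $i'<i$ and for all $(j',i)$ with $j'<j$. The linear remainders indeed fall in this range: $\mathcal{E}_{j+2,i-2}$ and $\mathcal{E}_{j+1,i-2}$ have strictly smaller spatial index. The commutators $Q_1,Q_3$ likewise land either at the same $i$ with smaller $j$ or at smaller $i$. But $Q_2$ does not. Consider the piece $Q_{220}$ coming from two time derivatives landing on the coefficient $(\tilde\eta_x+w_x)^{-\gamma-1}$: its main part is $K_{20}\,\sigma\,\partial_t^{j-2}\partial_x^{i+1}w$ with $K_{20}\sim (1+t)^{-(\lambda+1)-\frac{\lambda+1}{\gamma+1}}|\partial_t^2\partial_x w|$. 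By \eqref{esobolev} only $\sigma^{1/2}\partial_t^2\partial_x w$ is in $L^\infty$, not $\partial_t^2\partial_x w$ itself, so after borrowing $\sigma^{1/2}$ you are left with $\|\sigma^{(\alpha+i)/2}\partial_t^{j-2}\partial_x^{i+1}w\|^2$. This is the low-weight component of $\mathcal{E}_{j-2,\,i+1}$, which has spatial index $i+1>i$ and hence lies outside both your outer and inner hypotheses. Putting the $L^\infty$ norm on the other factor does not help: one then needs $\int\sigma^{\alpha-i-j+4}(\partial_t^2\partial_x w)^2$, which after repeated Hardy applications lands in $\mathcal{E}_{2,r}$ with $r$ of size $i+j-3$, again possibly exceeding $i$.

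The paper avoids this by inducting on the \emph{total} order $k=i+j$ (so that all nonlinear contributions, which have total order at most $i+j-1$, are automatically in the hypothesis), and then, within the fixed level $k+1$, bounding $\mathcal{E}_{k,1},\mathcal{E}_{k-1,2},\ldots$ in that order, because the linear remainder $\mathcal{E}_{j+2,i-2}$ sits at the same total level but strictly smaller spatial index. Your argument becomes correct once you replace the outer-on-$i$ scheme by this induction on $i+j$ together with the inner sweep in increasing $i$; everything else you wrote carries over unchanged.
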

\begin{proof}
We use induction on $i+j$ to prove this lemma. As shown in Lemma \ref{llowee}, we know that \eqref{ehelliptic2} holds for $i+j\leq1 .$ For $1 \leq k \leq m-1,$ we make the induction hypothesis that \eqref{ehelliptic2} holds for all $i\geq 1,j \geq 0,$ and $i+j \leq k,$ that is,
\be\label{ehelliptica}
 \mathcal{E}_{j, i}(t) \lesssim \sum_{\ell=0}^{i+j} \mathcal{E}_{\ell}(t), \quad i\geq 1,j \geq 0, i+j \leq k,
\ee
it then suffices to prove \eqref{ehelliptic2} for $i\geq1, j \geq 0,$ and $i+j=k+1 .$ We will bound $\mathcal{E}_{k+1-\ell, \ell}$ from $\ell=1$ to $k+1$ step by step.

We estimate $Q_{1}$ and $Q_{3}$ given by \eqref{ehelliptic5} as follows. For $Q_{1}$, it follows from \eqref{ecor3} and \eqref{ecor4} that
\bes
\begin{aligned}
\left|Q_{1}\right| \lesssim & \sum_{\ell=1}^{j}(1+t)^{-(\la+1)-\ell}\left(\sigma\left|\partial_{t}^{j-\ell} \partial_{x}^{i+1} w\right|+\left|\partial_{t}^{j-\ell} \partial_{x}^{i} w\right|\right) \\
&+\sum_{\ell=0}^{j} \sum_{r=1}^{i-1}(1+t)^{-(\la+1)-\ell}\left|\partial_{t}^{j-\ell} \partial_{x}^{r} w\right|,
\end{aligned}
\ees
and
\bes
\begin{aligned}
\left|Q_{3}\right| \lesssim & \sum_{\ell=1}^{j}(1+t)^{-\la-\ell}\left|\partial_{t}^{j+1-\ell} \partial_{x}^{i-1} w\right|.
\end{aligned}
\ees

So that we can get
\bes
\bali
&\left\|\sigma^{\frac{\alpha+i-1}{2}} Q_{1}\right\|^{2} \\
 \lesssim& \sum_{\ell=1}^{j}(1+t)^{-2(\la+1)-2\ell}\left(\left\|\sigma^{\frac{\alpha+i+1}{2}} \partial_{t}^{j-\ell} \partial_{x}^{i+1} w\right\|^{2}+\left\|\sigma^{\frac{\alpha+i-1}{2}} \partial_{t}^{j-\ell} \partial_{x}^{i} w\right\|^{2}\right)\\
&\quad+\sum_{\ell=0}^{j} \sum_{r=1}^{i-1}(1+t)^{-2(\la+1)-2 \ell}\left\|\sigma^{\frac{\alpha+i-1}{2}} \partial_{t}^{j-\ell} \partial_{x}^{r} w\right\|^{2} \\
\lesssim&(1+t)^{-2 j-2(\la+1)+\dl{\bf1}_{\la<1}}\left(\sum_{\ell=1}^{j} \mathcal{E}_{j-\ell, i}+\sum_{\ell=0}^{j} \sum_{r=1}^{i-1} \mathcal{E}_{j-\ell, r}\right).
\eali
\ees
Here $\sum_{r=1}^{i-1}$ is understood to be $0$ if $i=1$.
And
\bes
\bali
&\left\|\sigma^{\frac{\alpha+i-1}{2}} Q_{3}\right\|^{2} \\
 \lesssim& \sum_{\ell=1}^{j}(1+t)^{-2\la-2\ell}\left\|\sigma^{\frac{\alpha+i-1}{2}} \partial_{t}^{j+1-\ell} \partial_{x}^{i-1} w\right\|^{2}\\
\lesssim&(1+t)^{-2 j-2(\la+1)+\dl{\bf1}_{\la<1}}\sum_{\ell=1}^{j}\left( \mathcal{E}_{j+1-\ell}{\bf 1}_{i=1}+ \mathcal{E}_{j+1-\ell,i-2}{\bf 1}_{i\geq 2}\right).
\eali
\ees
For $Q_{2},$ it follows from \eqref{ecor3}, \eqref{ecor4}, and \eqref{esobolev} that
\[
\begin{aligned}
&\left|Q_{2}\right|\\
  \lesssim& \sum_{n=0}^{j} \sum_{\ell=0}^{i-1} K_{n \ell}\left(\left|\partial_{t}^{j-n} \partial_{x}^{i-1-\ell}\left(\sigma w_{x x}\right)\right|+\left|\partial_{t}^{j-n} \partial_{x}^{i-1-\ell}\left(\sigma_x w_{x}\right)\right|\right) \\
 \lesssim& \sum_{n=0}^{j} \sum_{\ell=0}^{i-1} K_{n \ell}\left(\left|\sigma \partial_{t}^{j-n} \partial_{x}^{i-\ell+1} w\right|+\left|\sigma_x \partial_{t}^{j-n} \partial_{x}^{i-\ell} w\right|+\left| \partial_{t}^{j-n} \partial_{x}^{i-\ell-1} w\right|\right)\\
:= &\sum_{n=0}^{j} \sum_{\ell=0}^{i-1} Q_{2 n \ell}.
\end{aligned}
\]

Here the main term of $K_{n\ell}$ is $\p^n_t\p^\ell_x\lt(\t{\eta}^{-\g-2}_x w_x\rt)$. Then, again by using \eqref{ecor3}, \eqref{ecor4}, and \eqref{esobolev}, we have
\be\label{ehelliptic6}
\begin{array}{c}
K_{00}=\s{\epsilon_{0}}(1+t)^{-(\la+1)-\frac{\la+1}{\gamma+1}+\f{\dl}{2}{\bf1}_{\la<1}}, \\
K_{10}=\s{\epsilon_{0}}(1+t)^{-(\la+1)-\frac{\la+1}{\gamma+1}-1+\f{\dl}{2}{\bf1}_{\la<1}},\\
K_{01}=(1+t)^{-(\la+1)-\frac{\la+1}{\gamma+1}}\left|\partial_{x}^{2} w\right|, \\
K_{20}=\s{\epsilon_{0}}(1+t)^{-(\la+1)-\frac{\la+1}{\gamma+1}-2+\f{\dl}{2}{\bf1}_{\la<1}}+(1+t)^{-(\la+1)-\frac{\la+1}{\gamma+1}}\left|\partial_{t}^{2} \partial_{x} w\right|, \\
K_{11}=(1+t)^{-(\la+1)-\frac{\la+1}{\gamma+1}-1}\left|\partial_{x}^{2} w\right|+(1+t)^{-(\la+1)-\frac{\la+1}{\gamma+1}}\left|\partial_{t} \partial_{x}^{2} w\right|, \\
K_{02}=(1+t)^{-(\la+1)-\frac{\la+1}{\gamma+1}}\left|\partial_{x}^{3} w\right|.
\end{array}
\ee

We do not list here $K_{n \ell}$ for $n+\ell \geq 3$ since we can use the same method to estimate $Q_{2 n \ell}$ for $n+\ell\geq 3$ as that for $n+\ell \leq 2 .$

First, for $n=\ell=0$,
\bes
\bali
&\left\|{\sigma}^{\frac{\alpha+i-1}{2}} Q_{200}\right\|^{2}\\
\ls& \epsilon_{0}(1+t)^{-2(\la+1)-\frac{2(\la+1)}{\gamma+1}+\dl{\bf1}_{\la<1}}\lt(\left\|{\sigma}^{\frac{\alpha+i+1}{2}}\p^{j}_t\p^{i+1}_x w\right\|^{2}\rt.\\
       &\lt.+\left\|{\sigma}^{\frac{\alpha+i-1}{2}}\sigma_x\p^{j}_t\p^{i}_x w\right\|^{2}+\left\|{\sigma}^{\frac{\alpha+i-1}{2}}\p^{j}_t\p^{i-1}_x w\right\|^{2}\rt)\\
& \ls\e_0(1+t)^{-2 j-2(\la+1)-\frac{2(\la+1)}{\gamma+1}+2\dl{\bf1}_{\la<1}}\lt(\mathcal{E}_{j,i}+\mathcal{E}_{j,i-2}{\bf1}_{i\geq2}+\mathcal{E}_{j}{\bf1}_{i=1}\rt),\\
&\ls \e_0(1+t)^{-2 j-2(\la+1)+\dl{\bf1}_{\la<1}}\lt(\mathcal{E}_{j,i}+\mathcal{E}_{j,i-2}{\bf1}_{i\geq2}+\mathcal{E}_{j}{\bf1}_{i=1}\rt),
\eali
\ees
where at the last but second line we have used the fact $\sigma_x$ is bounded and at last line, used that $\dl\in(0,\f{2(\la+1)}{\g+1})$.

Also, case $n=1,\ell=0$ can be estimated the same as that for $n=0,\ell=0$,
\bes
\bali
&\left\|{\sigma}^{\frac{\alpha+i-1}{2}} Q_{210}\right\|^{2}\\
\ls& \epsilon_{0}(1+t)^{-2(\la+1)-\frac{2(\la+1)}{\gamma+1}-2+\dl{\bf1}_{\la<1}}\lt(\left\|{\sigma}^{\frac{\alpha+i+1}{2}}\p^{j-1}_t\p^{i+1}_x w\right\|^{2}\rt.\\
       &\lt.+\left\|{\sigma}^{\frac{\alpha+i-1}{2}}\sigma_x\p^{j-1}_t\p^{i}_x w\right\|^{2}+\left\|{\sigma}^{\frac{\alpha+i-1}{2}}\p^{j-1}_t\p^{i-1}_x w\right\|^{2}\rt)\\
 \ls&\e_0(1+t)^{-2 j-2(\la+1)+\dl{\bf1}_{\la<1}}\lt(\mathcal{E}_{j-1,i}+\mathcal{E}_{j-1,i-2}{\bf1}_{i\geq2}+\mathcal{E}_{j-1}{\bf1}_{i=1}\rt).
\eali
\ees

For $n=0,\ell=1$, remember here that $i\geq2$, then we have
\bes
\bali
&\left\|{\sigma}^{\frac{\alpha+i-1}{2}} Q_{201}\right\|^{2}\\
\ls& (1+t)^{-2(\la+1)-\frac{2(\la+1)}{\gamma+1}}\| \sigma^{1/2}w_{xx} \|^{2}_{L^\i}\lt(\left\|{\sigma}^{\frac{\alpha+i}{2}}\p^{j}_t\p^{i}_x w\right\|^{2}\rt.\\
       &\lt.+\left\|{\sigma}^{\frac{\alpha+i-2}{2}}\sigma_x\p^{j}_t\p^{i-1}_x w\right\|^{2}+\left\|{\sigma}^{\frac{\alpha+i-2}{2}}\p^{j}_t\p^{i-2}_x w\right\|^{2}\rt)\\
& \ls\e_0(1+t)^{-2 j-2(\la+1)+\dl{\bf1}_{\la<1}}\lt(\mathcal{E}_{j,i-1}+\mathcal{E}_{j,i-3}{\bf1}_{i\geq3}+\mathcal{E}_{j}{\bf1}_{i=2}\rt).
\eali
\ees
Next we go to estimate the terms with $n+\ell=2$. For $n=2,\ell=0$, substitute \eqref{ehelliptic6} into $Q_{220}$ to obtain that
\be\label{ehe0}
\bali
&\left\|{\sigma}^{\frac{\alpha+i-1}{2}} Q_{220}\right\|^{2}\\
\ls&(1+t)^{-2(\la+1)-\frac{2(\la+1)}{\gamma+1}}\| \sigma^{1/2} w_{xtt} \|^{2}_{L^\i}\lt(\left\|{\sigma}^{\frac{\alpha+i}{2}}\p^{j-2}_t\p^{i+1}_x w\right\|^{2}\rt.\\
       &\lt.+\left\|{\sigma}^{\frac{\alpha+i-2}{2}}\sigma_x\p^{j-2}_t\p^{i}_x w\right\|^{2}+\left\|{\sigma}^{\frac{\alpha+i-2}{2}}\p^{j-2}_t\p^{i-1}_x w\right\|^{2}\rt)\\
&+\epsilon_{0}(1+t)^{-4-\frac{2(\la+1)}{\gamma+1}-2(\la+1)+\dl{\bf1}_{\la<1}}\lt(\left\|{\sigma}^{\frac{\alpha+i+1}{2}}\p^{j-2}_t\p^{i+1}_x w\right\|^{2}\rt.\\
       &\lt.+\left\|{\sigma}^{\frac{\alpha+i-1}{2}}\sigma_x\p^{j-2}_t\p^{i}_x w\right\|^{2}+\left\|{\sigma}^{\frac{\alpha+i-1}{2}}\p^{j-2}_t\p^{i-1}_x w\right\|^{2}\rt).
\eali
\ee
 Remember the Hardy inequality \eqref{ehardy}, then we can obtain
\be\label{ehe1}
\bali
&\left\|{\sigma}^{\frac{\alpha+i-2}{2}}\sigma_x\p^{j-2}_t\p^{i}_x w\right\|^{2}\\
\ls& \left\|{\sigma}^{\frac{\alpha+i}{2}}\p^{j-2}_t\p^{i}_x w\right\|^{2}+ \left\|{\sigma}^{\frac{\alpha+i}{2}}\p^{j-2}_t\p^{i+1}_x w\right\|^{2} .
\eali
\ee
So, Inserting \eqref{ehe1} into \eqref{ehe0}, we get
\bes
\bali
&\left\|{\sigma}^{\frac{\alpha+i-1}{2}} Q_{220}\right\|^{2}\\
\ls&\e_0(1+t)^{-4-\frac{2(\la+1)}{\gamma+1}-2(\la+1)+\dl{\bf1}_{\la<1}}\lt(\left\|{\sigma}^{\frac{\alpha+i}{2}}\p^{j-2}_t\p^{i+1}_x w\right\|^{2}\rt.\\
       &\lt.+\left\|{\sigma}^{\frac{\alpha+i}{2}}\p^{j-2}_t\p^{i}_x w\right\|^{2}+\left\|{\sigma}^{\frac{\alpha+i-2}{2}}\p^{j-2}_t\p^{i-1}_x w\right\|^{2}\rt)\\
&+\epsilon^2_{0}(1+t)^{-4-\frac{2(\la+1)}{\gamma+1}-2(\la+1)+\dl{\bf1}_{\la<1}}\lt(\left\|{\sigma}^{\frac{\alpha+i+1}{2}}\p^{j-2}_t\p^{i+1}_x w\right\|^{2}\rt.\\
       &\lt.+\left\|{\sigma}^{\frac{\alpha+i-1}{2}}\sigma_x\p^{j-2}_t\p^{i}_x w\right\|^{2}+\left\|{\sigma}^{\frac{\alpha+i-1}{2}}\p^{j-2}_t\p^{i-1}_x w\right\|^{2}\rt)\\
 \ls&\e_0(1+t)^{-2 j-2(\la+1)-\frac{2(\la+1)}{\gamma+1}+2\dl{\bf1}_{\la<1}}\sum^{i+1}_{\ell=i-2}\mathcal{E}_{j-2,\ell}\\
\ls&\e_0(1+t)^{-2 j-2(\la+1)+\dl{\bf1}_{\la<1}}\sum^{i+1}_{\ell=i-2}\mathcal{E}_{j-2,\ell}.
\eali
\ees

In the case $n=\ell=1$, we have $i\geq 2$.
\bes
\bali
&\left\|{\sigma}^{\frac{\alpha+i-1}{2}} Q_{211}\right\|^{2}\\
\ls&(1+t)^{-2(\la+1)-\frac{2(\la+1)}{\gamma+1}}\|\sigma w_{xxt} \|^{2}_{L^\i}\lt(\left\|{\sigma}^{\frac{\alpha+i-1}{2}}\p^{j-1}_t\p^{i}_x w\right\|^{2}\rt.\\
       &\lt.+\left\|{\sigma}^{\frac{\alpha+i-3}{2}}\sigma_x\p^{j-1}_t\p^{i-1}_x w\right\|^{2}+\left\|{\sigma}^{\frac{\alpha+i-3}{2}}\p^{j-1}_t\p^{i-2}_x w\right\|^{2}\rt)\\
&+(1+t)^{-2(\la+1)-2-\frac{2(\la+1)}{\gamma+1}}\|\sigma^{1/2} w_{xx} \|^{2}_{L^\i}\lt(\left\|{\sigma}^{\frac{\alpha+i}{2}}\p^{j-1}_t\p^{i}_x w\right\|^{2}\rt.\\
       &\lt.+\left\|{\sigma}^{\frac{\alpha+i-2}{2}}\sigma_x\p^{j-1}_t\p^{i-1}_x w\right\|^{2}+\left\|{\sigma}^{\frac{\alpha+i-2}{2}}\p^{j-2}_t\p^{i-2}_x w\right\|^{2}\rt)\\
\ls&\e_0(1+t)^{-2j-2(\la+1)+\dl{\bf1}_{\la<1}}\lt(\mathcal{E}_{j-1,i}+\mathcal{E}_{j-1,i-2}\rt)\\
 &+\e_0(1+t)^{-2 j-2(\la+1)+\dl{\bf1}_{\la<1}}\lt(\mathcal{E}_{j-1,i-1}+\mathcal{E}_{j-1,i-2}\rt)\\
\ls&\e_0(1+t)^{-2 j-2(\la+1)+\dl{\bf1}_{\la<1}}\sum^{i}_{\ell=i-2}\mathcal{E}_{j-1,\ell}.
\eali
\ees
Here due to $i\geq 2$ and \eqref{ehardy}, we have used
\bes
\left\|{\sigma}^{\frac{\alpha+i-3}{2}}\p^{j-1}_t\p^{i-1}_x w\right\|^{2}\les  \left\|{\sigma}^{\frac{\alpha+i-1}{2}}\p^{j-1}_t\p^{i}_x w\right\|^{2}\left\|{\sigma}^{\frac{\alpha+i-1}{2}}\p^{j-1}_t\p^{i-1}_x w\right\|^{2}.
\ees
In the case $n=0,\ \ell=2$, we have $i\geq 3$.

\bes
\bali
&\left\|{\sigma}^{\frac{\alpha+i-1}{2}} Q_{202}\right\|^{2}\\
\ls&(1+t)^{-2(\la+1)-\frac{2(\la+1)}{\gamma+1}}\|\sigma^{3/2} w_{xxx} \|^{2}_{L^\i}\lt(\left\|{\sigma}^{\frac{\alpha+i-2}{2}}\p^{j}_t\p^{i-1}_x w\right\|^{2}\rt.\\
       &\lt.+\left\|{\sigma}^{\frac{\alpha+i-4}{2}}\sigma_x\p^{j}_t\p^{i-2}_x w\right\|^{2}+\left\|{\sigma}^{\frac{\alpha+i-4}{2}}\p^{j}_t\p^{i-3}_x w\right\|^{2}\rt)\\
\ls&\e_0(1+t)^{-2j-2(\la+1)+\dl{\bf1}_{\la<1}}\lt(\mathcal{E}_{j,i-2}+\mathcal{E}_{j,i-3}\rt).
\eali
\ees
Since the leading term of $K_{n\ell}$ is
\bes
\sum^n_{\iota=0}(1+t)^{-(\la+1)-\f{\la+1}{\g+1}-\iota}|\p^{n-\iota}_t\p^{\ell+1}_x w|,
\ees
other terms for $n+\ell\geq 3$ can be handled with the same line.

Now combining all the above estimates for $Q_1,\ Q_2$ and $Q_3$, we get
\bes
\bali
&\left\|{\sigma}^{\frac{\alpha+i-1}{2}} (Q_1,Q_2,Q_3)\right\|^{2}\\
\ls& (1+t)^{-2 j-2(\la+1)+\dl{\bf1}_{\la<1}}\lt(\e_0\mathcal{E}_{j,i}+\sum^j_{\ell=0} \mathcal{E}_{\ell}+\sum_{0\leq \ell\leq j\atop \ell+r\leq i+j-1} \mathcal{E}_{\ell,r}\rt).
\eali
\ees
Substituting this into \eqref{ehelliptic7}, we get
\be\label{ehelliptic3}
\bali
\mathcal{E}_{j,i} \les& (1+t)^{2 j+2(\la+1)-\dl{\bf1}_{\la<1}}\left\|\sigma^{\frac{\alpha+i-1}{2}}\p^{j+2}_t\p^{i-1}_x w\right\|^{2}\\
&+(1+t)^{2 (j+1)-\dl{\bf1}_{\la<1}}\left\|{\sigma}^{\frac{\alpha+i-1}{2}} \p^{j+1}_t\p^{i-1}_x w\right\|^{2}\\
&+\e_0\mathcal{E}_{j,i}+\sum_{0\leq \ell\leq j\atop \ell+r\leq i+j-1}\mathcal{E}_{\ell,r}+\sum_{\ell=0}^{j} \mathcal{E}_{\ell}.
\eali
\ee
In particularly, when $i\geq 2$, we have
\be\label{ehelliptic4}
\bali
\mathcal{E}_{j,i} \les&\mathcal{E}_{j+2,i-2}+\mathcal{E}_{j+1,i-2}+\sum_{0\leq \ell\leq j\atop \ell+r\leq i+j-1}\mathcal{E}_{\ell,r}+\sum_{\ell=0}^{j} \mathcal{E}_{\ell}.
\eali
\ee
In what follows, we use \eqref{ehelliptic4} and the induction hypothesis \eqref{ehelliptica} to show that \eqref{ehelliptic2} holds for $i+j=k+1 .$ First, choosing $j=k$ and $i=1$ in \eqref{ehelliptic3} gives
$$
\mathcal{E}_{k, 1}(t) \lesssim \mathcal{E}_{k+1}(t)+\sum_{0\leq \ell\leq k\atop \ell+r\leq k}\mathcal{E}_{\ell,r}+\sum_{\ell=0}^{k} \mathcal{E}_{\ell},
$$
which, together with \eqref{ehelliptica} implies
\bes
 \mathcal{E}_{k, 1}(t) \lesssim \sum_{\ell=0}^{k+1} \mathcal{E}_{\ell}(t).
\ees
Similarly, using \eqref{ehelliptic4}, we have
$$
\begin{aligned}
\mathcal{E}_{k-1,2}(t) \lesssim & \mathcal{E}_{k+1}(t)+\mathcal{E}_{k}(t)+\sum_{0\leq \ell\leq k-1\atop \ell+r\leq k} \mathcal{E}_{\ell,r}+\sum_{\ell=0}^{k-1} \mathcal{E}_{\ell}\lesssim \sum_{\ell=0}^{k+1} \mathcal{E}_{\ell}(t).
\end{aligned}
$$
For $\mathcal{E}_{k-2,3},$ it follows from \eqref{ehelliptic4} and \eqref{ehelliptica} that
\bes
\bali
\mathcal{E}_{k-2,3}(t) \lesssim& \mathcal{E}_{k, 1}(t)+\mathcal{E}_{k-1,1}(t)+\sum_{0\leq \ell\leq k-2\atop \ell+r\leq k}+ \mathcal{E}_{\ell,r}
+\sum_{\ell=0}^{k-2} \mathcal{E}_{\ell}\lesssim& \sum_{\ell=0}^{k+1} \mathcal{E}_{\ell}(t).
\eali
\ees
The other cases can be handled similarly. So we have proved \eqref{ehelliptic2} when $i+j=k+1 .$ This finishes the proof of Lemma \ref{lhighe}.
\end{proof}

\subsection{Nonlinear Weighted Energy Estimates}\label{subweight}

In this subsection, we prove that the weighted energy $\mathcal{E}_{j}(t)$ can be bounded by the initial data for $t \in[0, T]$.
 \begin{proposition}\label{proweightedenergy}
 Suppose that \eqref{eenergyass} holds for a suitably small positive number $\epsilon_{0} \in(0,1) .$ Then for $t \in[0, T]$
\bes
 \mathcal{E}_{j}(t) \lesssim \sum_{\ell=0}^{j} \mathcal{E}_{\ell}(0), \quad j=0,1, \ldots, m.
\ees
\end{proposition}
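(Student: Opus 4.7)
The plan is to prove the estimate by induction on $j$. For each fixed $j$, the strategy is to apply $\partial_t^j$ to the evolution equation $\eqref{eerr}_1$, multiply by the right weighted test function, integrate over $\mathcal I$, and derive a differential inequality whose right-hand side is controlled by lower-order energies (via the induction hypothesis) and by $\epsilon_0$-small self-absorbing terms (via \eqref{esobolev} and Proposition \ref{pelliptic}).

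For the base case $j=0$ I would multiply $\eqref{eerr}_1$ by $(1+t)^{-\delta{\bf 1}_{\lambda<1}} w_t$ and integrate over $\mathcal I$. The boundary contribution vanishes because of the degeneracy of $\bar\rho_0$ at $\partial\mathcal I$. Taylor-expanding
\[
(\tilde\eta_x+w_x)^{-\gamma}-\tilde\eta_x^{-\gamma}=-\gamma\tilde\eta_x^{-\gamma-1}w_x+O(w_x^2),
\]
and using $\tilde\eta_x\thickapprox(1+t)^{(\lambda+1)/(\gamma+1)}$ from \eqref{ecor3}--\eqref{ecor4}, the linear part of the pressure term integrates by parts into a good positive quantity that reconstructs $\int\sigma^{\alpha+1}w_x^2\,dx$ up to a time factor, while the damping term produces a favorable dissipation $\frac{\mu}{(1+t)^\lambda}(1+t)^{-\delta{\bf 1}_{\lambda<1}}\int\bar\rho_0 w_t^2\,dx$. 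The correction coming from the time-derivative of the weight is reabsorbed by the damping thanks to the condition $0<\lambda<1$ or $\lambda=1,\mu>2$. The quadratic remainder is handled by $\|w_x\|_{L^\infty}^2\lesssim\epsilon_0(1+t)^{\delta{\bf 1}_{\lambda<1}}$ from \eqref{esobolev} and is therefore $\epsilon_0$-small, hence absorbable.

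For the inductive step, apply $\partial_t^j$ to $\eqref{eerr}_1$ and test against $(1+t)^{2j+\lambda+1-\delta{\bf 1}_{\lambda<1}}\partial_t^{j+1}w$. The principal linear part of the equation
\[
\bar\rho_0\partial_t^{j+2}w+\tfrac{\mu}{(1+t)^\lambda}\bar\rho_0\partial_t^{j+1}w
+\tfrac{1}{\gamma}\bigl[\bar\rho_0^\gamma(-\gamma\tilde\eta_x^{-\gamma-1})\partial_t^j w_x\bigr]_x
\]
generates, after integration by parts in $x$ and in $t$, the full $\mathcal E_j$ plus a coercive dissipation of the form $\mu(1+t)^{2j+1-\delta{\bf 1}_{\lambda<1}}\int\bar\rho_0(\partial_t^{j+1}w)^2\,dx$. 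The growth exponents produced by differentiating the time weights are exactly
$(2j+\lambda+1-\delta{\bf 1}_{\lambda<1})(1+t)^{-1}\cdot(1+t)^{\lambda+1}$ in the kinetic part and must be strictly dominated by the damping; this is precisely where the critical constraint $\mu>2$ when $\lambda=1$ enters (for every $j\le m$ the inequality $2\mu>2j+\lambda+1-\delta$ needs to be verified, and the cut-off $m=\min\{4+[\alpha],[\mu+2/(\gamma+1)]\}$ when $\lambda=1$ is exactly what makes this possible). The commutators from $\partial_t^j$ hitting $(1+t)^{-\lambda}$ produce terms of the form $(1+t)^{-\lambda-\ell}\bar\rho_0\partial_t^{j+1-\ell}w$ which, after the $L^2$-pairing, become lower-order time-derivative energies $\mathcal E_\ell$, $\ell<j$, already controlled by induction.

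The main obstacle, and where I would spend most of the work, is the nonlinear commutator coming from $\partial_t^j[\bar\rho_0^\gamma((\tilde\eta_x+w_x)^{-\gamma}-\tilde\eta_x^{-\gamma})]_x$ once one removes the linear piece used above. Expanding via Fa\`a di Bruno this yields a sum of products
\[
\bar\rho_0^\gamma\,\mathcal P(\tilde\eta_x,\partial_t^{k_1}w_x,\ldots,\partial_t^{k_r}w_x),\q k_1+\cdots+k_r\le j,\ r\ge 2,
\]
differentiated once more in $x$. After the $L^2$-pairing with $(1+t)^{2j+\lambda+1-\delta{\bf 1}_{\lambda<1}}\partial_t^{j+1}w$, I would integrate by parts in $x$ to move the $\partial_x$ onto $\partial_t^{j+1}w$; this produces $\sigma^{\alpha+1}\partial_t^{j+1}w_x$, which is controlled by $\mathcal E_{j+1,0}$, hence ultimately by $\sum_{\ell\le j+1}\mathcal E_\ell$ through the elliptic estimates of Proposition \ref{pelliptic}. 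The remaining $w_x$-factors are put in $L^\infty$ using \eqref{esobolev}, so that each such term picks up a factor $\sqrt{\epsilon_0}$ and, combined with the powers of $\tilde\eta_x\thickapprox(1+t)^{(\lambda+1)/(\gamma+1)}$, comes with enough time decay (here the choice $\delta\in(0,2(\lambda+1)/(\gamma+1))$ is essential) to be absorbed into the left-hand side by smallness. Hardy's inequality \eqref{ehardy} is used repeatedly to convert boundary-weighted norms into interior ones whenever the degenerate weight $\sigma$ must be moved around.

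Combining these steps produces a differential inequality of the form
\[
\tfrac{d}{dt}\mathcal E_j(t)+c\,(1+t)^{2j+1-\delta{\bf 1}_{\lambda<1}}\int\bar\rho_0(\partial_t^{j+1}w)^2\,dx
\le C\sqrt{\epsilon_0}\,\mathcal E_j(t)\cdot\text{(decaying factor)}+C\sum_{\ell=0}^{j-1}\mathcal E_\ell(t),
\]
where the decaying factor is integrable in $t$ because of $\delta<2(\lambda+1)/(\gamma+1)$. Grönwall's inequality, together with the inductive bound $\mathcal E_\ell(t)\lesssim\sum_{r\le\ell}\mathcal E_r(0)$ for $\ell<j$, then yields $\mathcal E_j(t)\lesssim\sum_{\ell=0}^j\mathcal E_\ell(0)$, closing the induction.
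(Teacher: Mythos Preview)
Your plan contains a genuine gap in the inductive step. Testing directly with the single multiplier $(1+t)^{2j+\lambda+1-\delta{\bf 1}_{\lambda<1}}\partial_t^{j+1}w$ forces, after differentiating the time weight, the coefficient inequality $2\mu>2j+\lambda+1-\delta$ (for $\lambda=1$: $\mu>j+1$). You claim the cutoff $m=\min\{4+[\alpha],[\mu+2/(\gamma+1)]\}$ guarantees this, but it does not: take $\lambda=1$, $\mu=2.5$, $\gamma$ large; then $m=[\mu+2/(\gamma+1)]=2$, yet for $j=2$ you would need $\mu>3$. The role of the cutoff on $m$ in the paper is entirely different: it is there because the derivative bounds \eqref{ecor4} on $\tilde\eta_x$ deteriorate once $k\ge\mu+2/(\gamma+1)$, not to balance damping against weight growth.

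The paper avoids this obstruction by a two-step structure. First it applies the base-case multiplier combination (both $\partial_t^{k+1}w$ \emph{and} $\partial_t^{k}w$, with carefully tuned coefficients as in Lemma~\ref{lbasice}) to $\partial_t^k w$; this step only uses $\mu>2$ and yields an inequality for a functional $\mathfrak E_k$ with a fixed positive dissipation. Only \emph{afterwards} is the inequality multiplied by $(1+t)^{2k}$; the resulting bad term $2k(1+t)^{2k-1}\mathfrak E_k$ is then split, and its $(\partial_t^k w)^2$--piece is pushed to the induction hypothesis, which in the paper includes the \emph{space--time integrals} of the dissipation (see \eqref{ehenergy2}), not merely the pointwise bound $\mathcal E_\ell(t)\lesssim\sum\mathcal E_r(0)$. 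Your final Gr\"onwall inequality with a bare $C\sum_{\ell<j}\mathcal E_\ell(t)$ on the right would produce linear growth in $t$ upon integration.

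A second gap is your treatment of the nonlinear commutator. After integration by parts in $x$ the top-order product $\sigma^{\alpha+1}J\,\partial_t^{j+1}w_x$ appears; you propose to bound it via $\mathcal E_{j+1,0}\lesssim\sum_{\ell\le j+1}\mathcal E_\ell$, but $\mathcal E_{j+1}$ is one order above what you are estimating and the $\sqrt{\epsilon_0}$ factor does not help because that term is not present on the left. The paper instead integrates this term by parts \emph{in time} (see \eqref{ehweighted3}), converting $J\,\partial_t^{k+1}w_x$ into $\frac{d}{dt}(J\,\partial_t^k w_x)+J_t\,\partial_t^k w_x$, which lowers the order back to $k$ and closes within the induction.
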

The proof of Proposition \ref{proweightedenergy} contains Lemma \ref{lbasice} and Lemma \ref{ehweightede} below.

\noindent {\bf Basic Energy Estimates}
\begin{lemma}\label{lbasice}
 Suppose that \eqref{eenergyass} holds for a suitably small positive number $\epsilon_{0} \in$ $(0,1) .$ Then
\be\label{ebasicenergy}
\bali
 &\mathcal{E}_{0}(t)+\int_{0}^{t} \int\lt[(1+\tau)^{1-\dl{\bf 1}_{\la<1}} \sigma^\al w_{\tau}^{2}+ (1+\tau)^{-1-\dl{\bf 1}_{\la<1}}\sigma^{\al+1} w_{x}^{2}\rt] d x d \tau\\
 \lesssim& \mathcal{E}_{0}(0), \quad t \in[0, T].
\eali
\ee
\end{lemma}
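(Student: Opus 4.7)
The plan is a weighted-multiplier argument tailored to the degenerate time-dependent damping. I will multiply equation $\eqref{eerr}_1$ by the test function $(1+t)^{\la+1-\dl{\bf 1}_{\la<1}}w_t$ and integrate over $\mathcal{I}$. The inertial term yields $\tfrac12\p_t\big[(1+t)^{\la+1-\dl{\bf 1}_{\la<1}}\int\sigma^\al w_t^2\,dx\big]$ minus a piece of order $(1+t)^{\la-\dl{\bf 1}_{\la<1}}\int\sigma^\al w_t^2$; the damping yields $\mu(1+t)^{1-\dl{\bf 1}_{\la<1}}\int\sigma^\al w_t^2$; and, after integrating by parts in $x$ on the pressure term (boundary contributions vanish since $\bar\rho_0|_{\p\mathcal{I}}=0$), introducing the elastic potential $\Psi(y;t):=-\int_0^y\big[(\t\eta_x(t)+s)^{-\g}-\t\eta_x(t)^{-\g}\big]\,ds\ge 0$ with quadratic part $\tfrac{\g}{2}\t\eta_x^{-\g-1}y^2$, the pressure contribution splits as $\tfrac1\g\p_t\big[(1+t)^{\la+1-\dl{\bf 1}_{\la<1}}\int\sigma^{\al+1}\Psi(w_x;t)\,dx\big]$ plus two corrections: one from $\p_t$ of the scalar weight (negative, of size $-\tfrac{\la+1-\dl}{2}(1+t)^{-1-\dl}\int\sigma^{\al+1}w_x^2$ at leading quadratic order) and one from $\p_t\Psi$ with $w_x$ fixed, which carries $\t\eta_{xt}$ (positive, of size $+\tfrac{\la+1}{2}(1+t)^{-1-\dl}\int\sigma^{\al+1}w_x^2$, by \eqref{ecor3}--\eqref{ecor4}). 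Their net contribution is the key $w_x^2$ dissipation $\tfrac{\dl}{2}(1+t)^{-1-\dl}\int\sigma^{\al+1}w_x^2\,dx$ when $\la<1$.

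When $\la=1$ the slack parameter $\dl$ is zero and the above $w_x^2$ dissipation collapses, so I add the auxiliary multiplier $(1+t)w$. Integration by parts in $t$ on $\int(1+t)\sigma^\al w\,w_{tt}\,dx$ produces $-(1+t)\int\sigma^\al w_t^2$; the damping cross-term becomes $\tfrac{\mu}{2}\p_t\int\sigma^\al w^2$ plus lower order; and the pressure term, after IBP in $x$ and Taylor expansion of $\chi(w_x)=(\t\eta_x+w_x)^{-\g}-\t\eta_x^{-\g}$, yields $(1+t)^{-1}\int\sigma^{\al+1}w_x^2$ at quadratic order. Combining with the first multiplier, the net coefficient of the $(1+t)\int\sigma^\al w_t^2$ dissipation is $\mu-2$, so strict positivity forces $\mu>2$; this is the sharp threshold consistent with the blowup results of \cite{Pan:2016NA,Pan:2016JMAA}.

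The cubic remainder $\Psi(y;t)-\tfrac{\g}{2}\t\eta_x^{-\g-1}y^2=O(\t\eta_x^{-\g-2}|y|^3)$ and the cross-term $\int\sigma^\al w\,w_t$ from the auxiliary multiplier will be estimated by Cauchy--Schwarz and the $L^\i$ bound \eqref{esobolev}, each picking up a factor $\s{M\e_0}$, so they are absorbed into the principal dissipation once $\e_0$ is small. The $(1+t)^{-\dl{\bf 1}_{\la<1}}\int\sigma^\al w^2$ piece of $\mathcal{E}_0$ is not produced directly by the multipliers; I recover it from $w(x,t)=w(x,0)+\int_0^t w_\tau(x,\tau)\,d\tau$ via Cauchy--Schwarz with weight $(1+\tau)^{1-\dl{\bf 1}_{\la<1}}$, using $\int_0^t(1+\tau)^{-1+\dl{\bf 1}_{\la<1}}d\tau\lesssim(1+t)^{\dl{\bf 1}_{\la<1}}$, which converts the time-integrated $w_\tau^2$ dissipation into the $w^2$ bound. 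Integrating the combined identity from $0$ to $t$, positivity of the modified energy and of the net dissipation then yields \eqref{ebasicenergy}. The main obstacle is the $\la=1$ case: without the slack $\dl>0$, positive $w_x^2$ dissipation only arises through the delicate two-multiplier cancellation whose positivity is precisely equivalent to $\mu>2$; verifying that the modified energy remains equivalent to $\mathcal{E}_0(t)$ throughout (via $\Psi\geq 0$, $\t\eta_{xt}\geq 0$, and Cauchy--Schwarz to absorb cross terms) is the most delicate bookkeeping step.
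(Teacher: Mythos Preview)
Your strategy is close in spirit to the paper's, but there is a genuine gap in how you extract positive $w_x^2$-dissipation. For $\la<1$ you use only the single multiplier $(1+t)^{\la+1-\dl}w_t$ and claim the $\t\eta_{xt}$-correction has size \emph{exactly} $\tfrac{\la+1}{2}(1+t)^{-1-\dl}\int\sigma^{\al+1}w_x^2$, so that it beats the weight-derivative term and leaves $\tfrac{\dl}{2}(1+t)^{-1-\dl}\int\sigma^{\al+1}w_x^2$. This cancellation is equivalent to $(\g+1)(1+t)\t\eta_{xt}/\t\eta_x=\la+1$, which holds for $\bar\eta_x$ but \emph{not} for the corrected $\t\eta_x=\bar\eta_x+h$; the estimates \eqref{ecor3}--\eqref{ecor4} give only $\t\eta_{xt}\ge 0$ and an \emph{upper} bound $|\t\eta_{xt}|\lesssim(1+t)^{\frac{\la+1}{\g+1}-1}$, never the lower bound you need. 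The same issue recurs at $\la=1$: saying the first multiplier's $w_x^2$-contribution ``collapses'' to zero again presumes the exact relation; with only $\t\eta_{xt}\ge 0$ it is actually $-(1+t)\t\eta_x^{-\g-1}\int\sigma^{\al+1}w_x^2$, which exactly cancels the $+(1+t)\t\eta_x^{-\g-1}\int\sigma^{\al+1}w_x^2$ from your $(1+t)w$ multiplier, leaving zero net $w_x^2$-dissipation.

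A second, smaller gap for $\la<1$: your $w_t^2$-dissipation coefficient is $\mu(1+t)^{1-\la}-\tfrac{\la+1-\dl}{2}$, which at $t=0$ reads $\mu-\tfrac{\la+1-\dl}{2}$ and is negative once $\mu$ is small, whereas the theorem allows any $\mu>0$. The paper fixes both problems by using \emph{two} multipliers even when $\la<1$: a $(K+t)^\la w_t$ multiplier (with $K$ large to handle small $\mu$) together with a $\nu w$ multiplier; it \emph{discards} the $\t\eta_{xt}$-term via $\t\eta_{xt}\ge 0$ rather than exploiting its precise size, and draws the $w_x^2$-dissipation entirely from the $w$-multiplier. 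For $\la=1$ the paper takes the second multiplier with coefficient $\nu=1+\kappa\in(1,\mu-1)$ (where $\mu=2+2\kappa$) so that both $w_t^2$ and $w_x^2$ dissipations come out with the strictly positive coefficient $\kappa$; your implicit choice $\nu=1$ is borderline and does not close.
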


\begin{proof}
 In order to simplify the presentation, by using Taylor expansion and smallness of $\t{\eta}_xw_x$, we rewrite  $\eqref{eerr}_1$ as follows

\be\label{ebasice1}
\sigma^\al w_{tt}+\f{\mu}{(1+t)^\la}\sigma^{\al} w_t-[\sigma^{\al+1}\t{\eta}^{-\g-1}_x(1+o(1))w_x]_x=0,
\ee
 where $o(1)$ means $o(1)\ls \s{\e_0}$.

 The proof will be divided into two parts. One is for $0<\la<1,\mu>0$ and the other is for $\la=1,\mu>2$.

\noindent{\bf Case 1: $\boldsymbol{0<\la<1,\mu>0}$}

 Multiplying $\eqref{ebasice1}$ by $(K+t)^\la w_{t}$, where $K>1$ is a suitably large constant, to be determined later, and integrating the product with respect to the spatial variable, then we can get
\bes
\bali
&\frac{1}{2}\frac{d}{d t} \int  \sigma^\al (K+t)^\la w_{t}^{2} d x- \f{\la}{2}(K+t)^{\la-1}\int \sigma^\al w_{t}^{2} d x +\mu\lt(\f{K+t}{1+t}\rt)^\la\int \sigma^\al w_{t}^{2} d x\\
&+(K+t)^\la\t{\eta}^{-\g-1}_x\int\sigma^{\al+1}\left[(1+o(1))w_x\right] w_{x t} d x=0.
\eali
\ees
We then have
\be\label{ebasic1l}
\bali
&\frac{1}{2}\frac{d}{d t} \int (K+t)^\la \lt[ \sigma^\al w_{t}^{2}+ \t{\eta}^{-\g-1}_x(1+o(1))\sigma^{\al+1} w_{t}^{2}\rt] d x\\
&+\lt[\mu\lt(\f{K+t}{1+t}\rt)^\la- \f{\la}{2}(K+t)^{\la-1} \rt]\int \sigma^\al w_{t}^{2} d x\\
&-(1+o(1))\f{1}{2}\p_t\lt((K+t)^\la\t{\eta}^{-\g-1}_x\rt)\int \sigma^{\al+1} w_{x}^{2} d x=0.
\eali
\ee
Using the fact that $\t{\eta}_{xt}\geq 0$, we simplify \eqref{ebasic1l} as
\be\label{ebasic1c}
\bali
&\frac{1}{2}\frac{d}{d t} \int (K+t)^\la \lt[ \sigma^\al w_{t}^{2}+ \t{\eta}^{-\g-1}_x(1+o(1))\sigma^{\al+1} w_{t}^{2}\rt] d x\\
&+\lt[\mu- \f{\la}{2}(K+t)^{\la-1} \rt]\int \sigma^\al w_{t}^{2} d x\\
&-(1+o(1))\f{\la}{2}(K+t)^{\la-1}\t{\eta}^{-\g-1}_x\int \sigma^{\al+1} w_{x}^{2} d x\leq0.
\eali
\ee
Now multiplying $\eqref{ebasice1}$ by $\nu w$ for some small $\nu>0$, to be determined later, and integrating the product with respect to the spatial variable, then we can get
\be\label{ebasic2l}
\bali
&\nu \frac{d}{d t} \int  \sigma^{\al}w_{t}w d x- \nu\int \sigma^\al w_{t}^{2} d x +\f{\nu\mu}{2}\p_t\int  \f{1}{(1+t)^\la}\sigma^\al w^{2} d x\\
&+\f{\nu\mu\la}{2(1+t)^{\la+1}}\int  \sigma^\al w^{2} d x+\nu \t{\eta}^{-\g-1}_x(1+o(1))\int \sigma^{\al+1}w^2_x d x=0.
\eali
\ee
Adding \eqref{ebasic1c} and \eqref{ebasic2l}, we have
\be\label{ebasic4}
\bali
&\frac{d}{d t} \int \t{\mathfrak{E}}_0(x,t)d x+\f{\nu\mu\la}{2(1+t)^{\la+1}}\int  \sigma^\al w^{2} d x\\
&+\lt[\mu- \f{\la}{2}(K+t)^{\la-1}-\nu \rt]\int \sigma^\al w_{t}^{2} d x\\
&+(1+o(1))\t{\eta}^{-\g-1}_x \lt(\nu-\f{\la}{2}(K+t)^{\la-1}\rt)\int \sigma^{\al+1} w_{x}^{2} d x\leq0.
\eali
\ee Here
\bes
\bali
\t{\mathfrak{E}}_0(x,t):=&  \f{(K+t)^\la}{2} \lt[ \sigma^\al w_{t}^{2}+ (1+o(1))\t{\eta}^{-\g-1}_x\sigma^{\al+1} w_{x}^{2}\rt] \\
&+  \nu\sigma^{\al}w_{t}w+\f{\nu\mu}{2(1+t)^\la} \sigma^\al w^{2}.
\eali
\ees
By using Cauchy-Schwartz inequality, we have
\be\label{ebasic3r}
\bali
\f{(K+t)^\la}{4} &\lt[ \sigma^\al w_{t}^{2}+\t{\eta}^{-\g-1}_x\sigma^{\al+1} w_{x}^{2}\rt]+\lt(\f{\nu\mu}{2}-\nu^2\rt)\f{1}{(1+t)^\la} \sigma^\al w^{2} \\
                                              &\qq\qq\qq \leq \t{\mathfrak{E}}_0\leq \\
\f{3(K+t)^\la}{4} &\lt[ \sigma^\al w_{t}^{2}+\t{\eta}^{-\g-1}_x\sigma^{\al+1} w_{x}^{2}\rt]+\lt(\f{\nu\mu}{2}+\nu^2\rt)\f{1}{(1+t)^\la} \sigma^\al w^{2}. \\
\eali
\ee
Since $\la<1$, by first choosing small $\nu$ and then large $K$, we can get
\be\label{ebasic4r}
\bali
&\frac{d}{d t} \int \t{\mathfrak{E}}_0(x,t)d x+\f{\nu\mu\la}{2(1+t)^{\la+1}}\int  \sigma^\al w^{2} d x\\
&+\f{\nu}{2}\int \sigma^\al w_{t}^{2} d x+\f{\nu}{2}\t{\eta}^{-\g-1}_x\int \sigma^{\al+1} w_{x}^{2} d x\leq 0.
\eali
\ee
Now multiplying \eqref{ebasic4r} by $(K+t)^{\la-\dl}$, we can achieve
\be\label{ebasic4r1}
\bali
&\frac{d}{d t} \int (K+t)^{\la-\dl}\t{\mathfrak{E}}_0(x,t)d x-(\la-\dl)(K+t)^{\la-1-\dl}\t{\mathfrak{E}}_0(x,t)\\
&+\f{\nu\mu\la(K+t)^{\la-\dl}}{2(1+t)^{\la+1}}\int  \sigma^\al w^{2} d x\\
&+\f{\nu(K+t)^{\la-\dl}}{2}\lt\{\int \sigma^\al w_{t}^{2} d x+\t{\eta}^{-\g-1}_x\int \sigma^{\al+1} w_{x}^{2} d x\rt\}\leq 0.
\eali
\ee
By inserting \eqref{ebasic3r} into \eqref{ebasic4r1}, we have
\bes
\bali
&\frac{d}{d t} \int (K+t)^{\la-\dl}\t{\mathfrak{E}}_0(x,t)d x\\
&+\f{\nu\mu(K+t)^{\la-\dl}}{2(1+t)^{\la+1}}\underbrace{\lt(\la-(\la-\dl)(1+\f{2\nu}{\mu})\rt)}_{L_1}\int  \sigma^\al w^{2} d x\\
&+(K+t)^{\la-\dl}\underbrace{\lt(\f{\nu}{2}-\f{3}{4}(K+t)^{\la-1}\rt)}_{L_2}\lt\{\int \sigma^\al w_{t}^{2} d x+\t{\eta}^{-\g-1}_x\int \sigma^{\al+1} w_{x}^{2} d x\rt\}\leq 0.
\eali
\ees
Again, by choosing small $\nu$ and large $K$, for any $\dl>0$, we can assure that $L_1$ and $L_2$ are positive. Then we have for some constant $c_{\la,\mu}$

\be\label{ebasic4r3}
\bali
&\frac{d}{d t} \int (K+t)^{\la-\dl}\t{\mathfrak{E}}_0(x,t)d x\\
&+c_{\la,\mu}(K+t)^{\la-\dl}\lt\{\int \sigma^\al w_{t}^{2} d x+\t{\eta}^{-\g-1}_x\int \sigma^{\al+1} w_{x}^{2} d x\rt\}\leq 0.
\eali
\ee
Now we multiply \eqref{ebasic1c} by $(K+t)^{1-\dl}$ to achieve that
\be\label{ebasic4r4}
\bali
&\frac{1}{2}\frac{d}{d t} \int (K+t)^{1+\la-\dl} \lt[ \sigma^\al w_{t}^{2}+ (1+o(1))\t{\eta}^{-\g-1}_x\sigma^{\al+1} w_{t}^{2}\rt] d x\\
&+c_{\la,\mu}  (K+t)^{1-\dl}\int \sigma^\al w_{t}^{2} d x\\
&-c_{\la,\mu}  (K+t)^{\la-\dl}\t{\eta}^{-\g-1}_x\int \lt(\sigma^{\al+1} w_{x}^{2}+\sigma^{\al+1} w_{x}^{2}\rt)d x\leq 0.
\eali
\ee
Multiplying a small number $\nu_1$ to \eqref{ebasic4r4} and then adding the resulting equations to \eqref{ebasic4r3}, we can get
\be\label{ebasic4r5}
\bali
&\frac{d}{d t} \int {\mathfrak{E}}_0(x,t)d x\\
&\q+c_{\la,\mu}(1+t)^{1-\dl}\int \sigma^\al w_{t}^{2} d x+c_{\la,\mu}(1+t)^{-1-\dl}\int \sigma^{\al+1} w_{x}^{2} d x\leq 0, \eali
\ee
where
\bes
\bali
{\mathfrak{E}}_0(x,t)&:=(K+t)^{\la-\dl}\t{\mathfrak{E}}_0(x,t)\\
 &\q +\nu_1(K+t)^{1+\la-\dl} \lt[ \sigma^\al w_{t}^{2}+ (1+o(1))\t{\eta}^{-\g-1}_x\sigma^{\al+1} w_{x}^{2}\rt]\\
 & \thickapprox  (1+t)^{1+\la-\dl }\sigma^\al w_{t}^{2}+(1+t)^{-\dl }\sigma^{\al+1} w_{x}^{2}+ (1+t)^{-\dl }\sigma^\al w^2,
\eali
\ees
\bes
\int{\mathfrak{E}}_0(x,t)dx\thickapprox \mathcal{E}_0(t).
\ees Here we have used the fact that $\t{\eta}^{-\g-1}_x\thickapprox (1+t)^{-(\la+1)}$.

Now integrating \eqref{ebasic4r5} with respect to time variable from $0$ to $t$. we get \eqref{ebasicenergy} in the case of $0<\la<1,\mu>0$.

\noindent{\bf Case 2: $\boldsymbol{\la=1,\mu>2}$}

Multiplying $\eqref{ebasice1}$ by $(1+t)^2w_{t}$ and integrating the product with respect to the spatial variable, then we can get
\bes
\bali
&\frac{1}{2}\frac{d}{d t} \int  \sigma^\al (1+t)^2w_{t}^{2} d x- (1+t)\int \sigma^\al w_{t}^{2} d x +\mu(1+t)\int \sigma^\al w_{t}^{2} d x\\
&+(1+t)^2\t{\eta}^{-\g-1}_x \int\sigma^{\al+1}\left[(1+o(1))w_x\right] w_{x t} d x=0.
\eali
\ees

By using that $\t{\eta}_{xt}\geq 0$, we then have
\be\label{ebasic1}
\bali
&\frac{1}{2}\frac{d}{d t} \int (1+t)^2 \sigma^\al w_{t}^{2}+ (1+t)^2\t{\eta}^{-\g-1}_x(1+o(1))\sigma^{\al+1} w_{t}^{2}d x\\
&+(\mu-1) (1+t)\int \sigma^\al w_{t}^{2} d x\\
&-(1+o(1))(1+t) \t{\eta}^{-\g-1}_x\int \sigma^{\al+1} w_{x}^{2} d x=0.
\eali
\ee

Now multiplying \eqref{ebasice1} by $\nu(1+t)w$ for some positive $\nu$ to be determined later, and integrating the product with respect to the spatial variable, then we can get
\be\label{ebasic2}
\bali
&\nu \frac{d}{d t} \int  \sigma^{\al}(1+t)w_{t}w d x- \nu (1+t)\int \sigma^\al w_{t}^{2} d x +\f{\kappa(\mu-1)}{2}\p_t\int  \sigma^\al w^{2} d x\\
&+\nu(1+t)\t{\eta}^{-\g-1}_x\int \sigma^{\al+1}(1+o(1))w^2_x d x=0. \eali
\ee
Adding \eqref{ebasic1} and \eqref{ebasic2}, we have

\be\label{ebasic4}
\bali
&\f{d}{dt}\int{\mathfrak{E}}_0(t)dx+(\mu-1-\nu) (1+t)\int \sigma^\al w_{t}^{2} d x\\
&+(\nu-1)(1+t)\t{\eta}^{-\g-1}_x(1+o(1))\int \sigma^{\al+1}w^2_x d x\leq 0.
\eali
\ee
Here
\bes
\bali
{\mathfrak{E}}_0(x,t):=&  \f{(1+t)^2}{2}\lt[ \sigma^\al w_{t}^{2}+\t{\eta}^{-\g-1}_x(1+o(1))\sigma^{\al+1} w^2_x\rt] \\
&+ \nu (1+t)  \sigma^\al w_{t}w +  \f{\nu(\mu-1)}{2}\sigma w^2.
\eali
\ees
Now, since $\mu>2$, we assume $\mu=2+2\kappa$ for some positive $\kappa$. Choosing $\nu=1+\kappa$, we can achieve 

\bes
\bali
{\mathfrak{E}}_0(x,t):=&  \f{(1+t)^2}{2}\lt[ \sigma^\al w_{t}^{2}+\t{\eta}^{-\g-1}_x(1+o(1))\sigma^{\al+1} w^2_x\rt] \\
&+ (1+\kappa) (1+t)  \sigma^\al w_{t}w +  \f{(1+\kappa)(1+2\kappa)}{2}\sigma^\al w^2.
\eali
\ees
By using Cauchy-Shwartz inequality to absorb the term involving $w_{t}w$ and remembering that $\t{\eta}^{-\g-1}_{x}\thickapprox (1+t)^{-2}$, it is not hard to deduce that
\bes
{\mathfrak{E}}_0(x,t)\thickapprox  (1+t)^2\sigma^\al w_{t}^{2}+\sigma^{\al+1} w_{x}^{2}+ \sigma^\al w^2,\q \int{\mathfrak{E}}_0(x,t)dx\thickapprox \mathcal{E}_0(t).
\ees
Then \eqref{ebasic4} becomes
\be\label{ebasic3}
\bali
&\f{d}{dt}\int{\mathfrak{E}}_0(t)dx+\kappa (1+t)\int \sigma^\al w_{t}^{2} d x\\
&+\kappa(1+t)\t{\eta}^{-\g-1}_x(1+o(1))\int \sigma^{\al+1}w^2_x d x\leq 0.
\eali
\ee
Now integrating \eqref{ebasic3} with respect to time variable from $0$ to $t$. we get \eqref{ebasicenergy} in the case of $\la=1,\ \mu>2$.

\end{proof}

\noindent {\bf Higher-Order Energy Estimates}

 For  $k \geq 1$, $\partial_{t}^{k}\eqref{eerr}_1$  yields that
\be\label{ehweighted0}
\begin{aligned}
&\sigma^\al \partial_{t}^{k+2} w+\f{\mu}{(1+t)^\la}\sigma^\al \partial_{t}^{k+1} w+\mu\sigma^\al\sum^k_{\ell=1}C^\ell_k\p^\ell_t(1+t)^{-\la}\p^{k+1-\ell}_t w\\
&\q -\left[\sigma^{\al+1}\left(\t{\eta}_x+w_{x}\right)^{-\gamma-1} \partial_{t}^{k} w_{x}+\sigma^{\al+1} J\right]_{x}=0,
\end{aligned}
\ee
where

\[
\begin{aligned}
J: &=\partial_{t}^{k-1}\left\{\tilde{\eta}_{x t}\left[\left(\tilde{\eta}_{x}+w_{x}\right)^{-\gamma-1}-\tilde{\eta}_{x}^{-\gamma-1}\right]\right\} \\
&+\left\{\partial_{t}^{k-1}\left[\left(\tilde{\eta}_{x}+w_{x}\right)^{-\gamma-1} w_{x t}\right]-\left(\tilde{\eta}_{x}+w_{x}\right)^{-\gamma-1} \partial_{t}^{k} w_{x}\right\}. \end{aligned}
\]
To obtain the leading terms of $J$, we single out the terms involving $\partial_{t}^{k-1} w_{x}$. To this end, we rewrite $J$ as
\be\label{ej}
\begin{aligned}
J=& \tilde{\eta}_{x t} \partial_{t}^{k-1}\left[\left(\tilde{\eta}_{x}+w_{x}\right)^{-\gamma-1}-\tilde{\eta}_{x}^{-\gamma-1}\right] \\
&+(k-1)\left[\left(\tilde{\eta}_{x}+w_{x}\right)^{-\gamma-1}\right]_{t} \partial_{t}^{k-1} w_{x} \\
&+w_{x t} \partial_{t}^{k-1}\left[\left(\tilde{\eta}_{x}+w_{x}\right)^{-\gamma-1}\right]\\
&+\sum_{\ell=1}^{k-1} C_{k-1}^{\ell}\left(\partial_{t}^{\ell} \tilde{\eta}_{x t}\right) \partial_{t}^{k-1-\ell}\left[\left(\widetilde{\eta}_{x}+w_{x}\right)^{-\gamma-1}-\widetilde{\eta}_{x}^{-\gamma-1}\right] \\
&+\sum_{\ell=2}^{k-2} C_{k-1}^{\ell}\left(\partial_{t}^{k-\ell} w_{x}\right) \partial_{t}^{\ell}\left[\left(\widetilde{\eta}_{x}+w_{x}\right)^{-\gamma-1}\right] \\
&=k\left[\lt(\tilde{\eta}_{x}+w_{x}\right)^{-\gamma-1}\right]_{t} \partial_{t}^{k-1} w_{x}+\widetilde{J},
\eali
\ee
where
\be\label{ej1}
\bali
\tilde{J}:=&-(\gamma+1)\left(\tilde{\eta}_{x}+w_{x}\right)_{t} \sum_{\ell=1}^{k-2} C_{k-2}^{\ell}\left(\partial_{t}^{k-1-\ell} w_{x}\right) \partial_{t}^{\ell}\left[\left(\tilde{\eta}_{x}+w_{x}\right)^{-\gamma-2}\right]\\
&-(\gamma+1)\left\{\tilde{\eta}_{x t} \partial_{t}^{k-2}\left\{\widetilde{\eta}_{x t}\left[\left(\widetilde{\eta}_{x}+w_{x}\right)^{-\gamma-2}-\tilde{\eta}_{x}^{-\gamma-2}\right]\right\}\right. \\
&\qq\qq\q \left.+w_{x t} \partial_{t}^{k-2}\left[\left(\widetilde{\eta}_{x}+w_{x}\right)^{-\gamma-2} \tilde{\eta}_{x t}\right]\right\} \\
&+\sum_{\ell=1}^{k-1} C_{k-1}^{\ell}\left(\partial_{t}^{\ell} \tilde{\eta}_{x t}\right) \partial_{t}^{k-1-\ell}\left[\left(\widetilde{\eta}_{x}+w_{x}\right)^{-\gamma-1}-\widetilde{\eta}_{x}^{-\gamma-1}\right] \\
&+\sum_{\ell=2}^{k-2} C_{k-1}^{\ell}\left(\partial_{t}^{k-\ell} w_{x}\right) \partial_{t}^{\ell}\left[\left(\widetilde{\eta}_{x}+w_{x}\right)^{-\gamma-1}\right].
\eali
\ee
Here summations $\sum_{\ell=1}^{k-2}$ and $\sum_{\ell=2}^{k-2}$ are understood to be 0 when $k=1,2$ and $k=1,2,3,$ respectively. It should be noted that only the terms of lower-order derivatives, $w_{x}, \ldots, \partial_{t}^{k-2} w_{x},$ are contained in $\tilde{J} .$ In particular, $\tilde{J}=0$ when $k=1$.
\begin{lemma}\label{ehweightede}
 Suppose that \eqref{eenergyass} holds for some small positive number $\epsilon_{0} \in(0,1)$. Then for all $j=1, \ldots, m$
\be\label{ehenergy1}
\begin{aligned}
&\mathcal{E}_{j}(t)+\int_{0}^{t} \int\left[(1+\tau)^{2 j+1-\dl{\bf1}_{\la<1}} \sigma^\al\left(\partial_{\tau}^{j+1} w\right)^{2}\rt.\\
&\qq\qq+\lt. (1+\tau)^{2 j-1-\dl{\bf1}_{\la<1}}\sigma^{\al+1}\left(\partial_{\tau}^{j} w_{x}\right)^{2}\rt] d x d\tau\\
\lesssim& \sum_{\ell=0}^{j} \mathcal{E}_{\ell}(0), \quad t \in[0, T].\\
\end{aligned}
\ee
\end{lemma}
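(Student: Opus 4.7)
The plan is to mimic the structure of Lemma~\ref{lbasice} but applied to the equation \eqref{ehweighted0} for $\p_t^j w$, combined with induction on $j$ and the elliptic estimates from Proposition~\ref{pelliptic}. I would take \eqref{ehweighted0} with $k=j$ as the starting equation and, in the regime $0<\la<1$, multiply by $(K+t)^{\la+2j-\dl}\p_t^{j+1} w$ for a sufficiently large $K$, then separately by a small $\nu(1+t)^{2j-\dl}\p_t^j w$, and finally incorporate a further time-weighted multiplier, exactly paralleling \eqref{ebasic1l}–\eqref{ebasic4r5}. In the regime $\la=1,\mu>2$, I would instead use $(1+t)^{2j+2}\p_t^{j+1} w$ and $\nu(1+t)^{2j+1}\p_t^j w$ with $\nu=1+\kappa$ where $\mu=2+2\kappa$, mirroring \eqref{ebasic1}–\eqref{ebasic3}; the positivity of the resulting damping coefficients relies essentially on $\mu>2$.

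After integration by parts in $x$, the linear principal part produces an energy functional equivalent to $\mathcal{E}_j(t)$ together with the expected bulk dissipation $(1+\tau)^{2j+1-\dl{\bf 1}_{\la<1}}\sigma^{\al}(\p_\tau^{j+1}w)^2+(1+\tau)^{2j-1-\dl{\bf 1}_{\la<1}}\sigma^{\al+1}(\p_\tau^j w_x)^2$. The remaining right-hand side consists of three error types: (a) the commutator $\mu\sigma^\al\sum_{\ell=1}^j C_j^\ell \p_t^\ell(1+t)^{-\la}\p_t^{j+1-\ell}w$ coming from the time-dependent damping; (b) the principal nonlinear contribution from \eqref{ej}, namely $j[(\t\eta_x+w_x)^{-\g-1}]_t \p_t^{j-1} w_x$ paired against $\p_t^{j+1}w$; and (c) the lower-order nonlinear remainder $\t J$ of \eqref{ej1} along with the integration-by-parts spatial boundary terms (which vanish because of the weight $\sigma^{\al+1}$).

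These errors should be controlled by a combination of three tools: the weighted $L^\infty$ bounds of Lemma~\ref{laprioriinfty} applied under the bootstrap \eqref{esobolev}, the elliptic estimates of Proposition~\ref{pelliptic} (to convert spatial derivatives $\p_x^i$ in the lower-order pieces of $\t J$ into a sum $\sum_\ell\mathcal{E}_\ell$), and the induction hypothesis for $\mathcal{E}_\ell$ with $\ell<j$. The smallness factor $\sqrt{\e_0}$ together with the decay $\t\eta_x\thickapprox (1+t)^{(\la+1)/(\g+1)}$ ensures that the genuinely nonlinear contributions are absorbed into the dissipation on the left. The commutator~(a) is handled via $|\p_t^\ell(1+t)^{-\la}|\lesssim (1+t)^{-\la-\ell}$, which supplies an extra factor $(1+t)^{-\ell}$ that renders it lower-order in the time-weighted energy and is therefore dominated by $\mathcal{E}_{j-\ell}$. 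Once all errors are absorbed, integrating from $0$ to $t$ yields $\mathcal{E}_j(t)+\int_0^t(\text{dissipation})\,d\tau\lesssim \sum_{\ell=0}^j\mathcal{E}_\ell(0)$.

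The main obstacle I anticipate is two-fold. First, in the critical case $\la=1,\mu>2$, the coefficient of the leading damping degenerates as $\mu\downarrow 2$; the parameter $\kappa=(\mu-2)/2>0$ has to be tracked carefully at every level $j=1,\ldots,m$ to keep the dissipation strictly positive, and this is precisely why $m$ is capped by $[\mu+2/(\g+1)]$ in this regime (higher derivatives would eventually require unavailable positivity). Second, the principal nonlinear term $[(\t\eta_x+w_x)^{-\g-1}]_t \p_t^{j-1}w_x$ involves a top-order time derivative of an essentially constant-in-$x$ quantity multiplied by a middle-order spatial derivative; the factor $\t\eta_{xt}\sim (1+t)^{(\la+1)/(\g+1)-1}$ together with the a priori bound on $(\t\eta_x+w_x)_t$ via \eqref{ecor3}–\eqref{ecor4} and \eqref{esobolev} is exactly strong enough to absorb this term into the $\sigma^{\al+1}(\p_\tau^j w_x)^2$ dissipation after one integration by parts in $t$, provided $\dl\in(0,2(\la+1)/(\g+1))$ is chosen precisely as in Lemma~\ref{llowee}.
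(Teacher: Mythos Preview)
Your proposal is essentially the paper's own proof: induction on $j$, the same multiplier scheme as in Lemma~\ref{lbasice} applied to \eqref{ehweighted0}, integration by parts in $t$ to handle the top-order piece $\int\sigma^{\alpha+1}J\,\partial_t^{j+1}w_x$ (the paper's \eqref{ehweighted3}), and control of the remainder $\tilde J$ via \eqref{esobolev}, Hardy's inequality, and Proposition~\ref{pelliptic}. One correction to your side commentary: the cap $m\le[\mu+2/(\gamma+1)]$ when $\la=1$ is \emph{not} due to a loss of damping positivity at higher $j$ (the multiplier structure from Lemma~\ref{lbasice} works verbatim at every level), but rather because the ansatz bounds \eqref{ecor4} on $d^k\tilde\eta_x/dt^k$ degrade from $(1+t)^{2/(\gamma+1)-k}$ to $(1+t)^{-\mu}\ln(1+t)$ once $k\ge\mu+2/(\gamma+1)$, which would spoil the estimates of $J$, $\tilde J$, and the $Q_i$ in the elliptic step.
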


\begin{proof} We use induction to prove \eqref{ehenergy1}. As shown in Lemma \ref{lbasice}, we know that \eqref{ehenergy1} holds for $j=0 .$ For $1 \leq k \leq m,$ we make the induction hypothesis that \eqref{ehenergy1} holds for all $j=0,1, \ldots, k-1,$ i.e.,
\be\label{ehenergy2}
\begin{aligned}
&\mathcal{E}_{j}(t)+\int_{0}^{t} \int\left[(1+\tau)^{2 j+1-\dl{\bf1}_{\la<1}} \sigma^\al\left(\partial_{\tau}^{j+1} w\right)^{2}\rt.\\
&\qq\qq+\lt. (1+\tau)^{2 j-1-\dl{\bf1}_{\la<1}}\sigma^{\al+1}\left(\partial_{\tau}^{j} w_{x}\right)^{2}\rt] d x d\tau\\
\lesssim& \sum_{\ell=0}^{j} \mathcal{E}_{\ell}(0), \quad t \in[0, T], \quad 0\leq j\leq k-1.\\
\end{aligned}
\ee

It suffices to prove \eqref{ehenergy1} holds for $j=k$ under the induction hypothesis \eqref{ehenergy2}.
We divide the proof into three steps.

\noindent{\bf Step one: Setup of the linearized main term}

We begin by rewriting \eqref{ehweighted0} as follows

\be\label{ehweighted0r}
\begin{aligned}
&\sigma^\al \partial_{t}^{k+2} w-\left[\sigma^{\al+1}\t{\eta}^{-\g-1}_x(1+o(1)) \partial_{t}^{k} w_{x}\rt]_x+ \f{\mu}{(1+t)^\la}\sigma^\al \partial_{t}^{k+1} w\\
=& -\mu\sigma^\al\sum^k_{\ell=1}C^\ell_k\p^\ell_t(1+t)^{-\la}\p^{k+1-\ell}_t w+\lt[\sigma^{\al+1} J\right]_{x}\\
:=& P(x,t).
\end{aligned}
\ee
If we view $\p^k_t w$ as $w$ in the proof of Lemma \ref{lbasice}, we can get a similar formula with \eqref{ebasic4r5} and \eqref{ebasic3} as follows
\bes
\bali
&\f{d}{dt}\int{\mathfrak{E}}_k(t)dx+(1+t)^{1-\dl{\bf1}_{\la<1}}\int \sigma^\al (\p^{k+1}_tw)^{2} d x\\
&+(1+t)^{-1-\dl{\bf1}_{\la<1}}\int \sigma^{\al+1}(\p^{k}_tw_x)^{2} d x\\
&\ls (1+t)^{1+\la-\dl{\bf1}_{\la<1}}\int P(x,t) \p^{k+1}_t wdx\\
&+ (1+t)^{\la-\dl{\bf1}_{\la<1}}\int P(x,t) \p^{k}_t wdx,
\eali
\ees
where
\bes
\bali
{\mathfrak{E}}_k(x,t)\thickapprox&  (1+t)^{1+\la-\dl{\bf1}_{\la<1}}\sigma^\al (\p^{k+1}_tw)^{2}\\
&+(1+t)^{-\dl{\bf1}_{\la<1}}\lt[\sigma^{\al+1}(\p^{k}_tw_x)^2 + \sigma^\al (\p^{k}_tw)^2\rt],
\eali
\ees
and
\bes
(1+t)^{2 k}\int{\mathfrak{E}}_k(x,t)dx\thickapprox \mathcal{E}_k(t).
\ees

Then by using integration by parts, and \eqref{ehweighted0r}, we can get \be\label{ehweighted6r}
\bali
&\f{d}{dt}\int{\mathfrak{E}}_k(t)dx\\
&+ \int \lt[(1+t)^{1-\dl{\bf1}_{\la<1}} \sigma^\al (\p^{k+1}_tw)^{2} +(1+t)^{-1-\dl{\bf1}_{\la<1}}\sigma^{\al+1} (\p^{k}_tw_x)^{2}\rt] d x\\
&\ls - (1+t)^{1+\la-\dl{\bf1}_{\la<1}}\int \sigma^{\al+1}J \p^{k+1}_t w_xd x\\
&- (1+t)^{\la-\dl{\bf1}_{\la<1}}\int \sigma^{\al+1}J \p^{k}_t w_xd x\\
&-\sum^k_{\ell=1}\int(1+t)^{1-\ell-\dl{\bf1}_{\la<1}}\sigma^\al\p^{k+1-\ell}_t w \partial_{t}^{k+1} wdx\\
&-\sum^k_{\ell=1}\int(1+t)^{-\ell-\dl{\bf1}_{\la<1}}\sigma^\al\p^{k+1-\ell}_t w \partial_{t}^{k} wdx.
\eali
\ee

Since the derivative of the term containing $\p^{j+1}_tw_x$ on the right hand of \eqref{ehweighted6r} exceeds the highest order derivative on the left side of \eqref{ehweighted6r}, we use \eqref{ej} and integration by parts on time to estimate the first term on the right-hand side of \eqref{ehweighted6r} as follows:

\be\label{ehweighted3}
\bali
&-(1+t)^{1+\la-\dl{\bf1}_{\la<1}}\int \sigma^\al J \partial_{t}^{k+1} w_{x} d x \\
=&\frac{d}{d t} \int (1+t)^{1+\la-\dl{\bf1}_{\la<1}}\sigma^{\al+1} J \partial_{t}^{k} w_{x} d x\\
&+(1+\la-\dl{\bf1}_{\la<1})(1+t)^{\la-\dl{\bf1}_{\la<1}}\int\sigma^{\al+1}J \partial_{t}^{k} w_{x} d x\\
 &+(1+t)^{1+\la-\dl{\bf1}_{\la<1}}\int \sigma^{\al+1}J_{t} \partial_{t}^{k} w_{x} d x\\
\quad=&-\frac{d}{d t} \int \sigma^{\al+1}(1+t)^{1+\la-\dl{\bf1}_{\la<1}}J \partial_{t}^{k} w_{x} d x\\
&+(1+\la-\dl{\bf1}_{\la<1})(1+t)^{\la-\dl{\bf1}_{\la<1}}\int \sigma^{\al+1} J \partial_{t}^{k} w_{x} d x\\
 &+k (1+t)^{1+\la-\dl{\bf1}_{\la<1}}\int \sigma^{\al+1}\left[\left(\t{\eta}_x+w_{x}\right)^{-\gamma-1}\right]_{t}\left(\partial_{t}^{k} w_{x}\right)^{2} d x \\
&+k(1+t)^{1+\la-\dl{\bf1}_{\la<1}}\int \sigma^{\al+1}\left[\left(\t{\eta}_x+w_{x}\right)^{-\gamma-1}\right]_{t t} \partial_{t}^{k-1} w_{x} \partial_{t}^{k} w_{x} d x\\
&+(1+t)^{1+\la-\dl{\bf1}_{\la<1}}\int \sigma^{\al+1}\widetilde{J}_{t} \partial_{t}^{k} w_{x} d x.
\eali
\ee
Inserting \eqref{ehweighted3} into \eqref{ehweighted6r}, we can get

\be\label{ehweighted6}
\bali
&\f{d}{dt}\int \lt[{\mathfrak{E}}_k(t)+\sigma^{\al+1} (1+t)^{1+\la-\dl{\bf1}_{\la<1}}J \partial_{t}^{k} w_{x}\rt]dx\\
 &\q +\int\lt[  (1+t)^{1-\dl{\bf1}_{\la<1}}\sigma^\al (\p^{k+1}_tw)^{2} + (1+t)^{-1-\dl{\bf1}_{\la<1}}\sigma^{\al+1} (\p^{k}_tw_x)^{2}\rt] d x\\
\ls& \sum^k_{\ell=1}(1+t)^{1-\ell-\dl{\bf1}_{\la<1}}\lt|\int\sigma^\al\p^{k+1-\ell}_t w \partial_{t}^{k+1} wdx\rt|\\
&+\sum^k_{\ell=1}(1+t)^{-\ell-\dl{\bf1}_{\la<1}}\lt|\int\sigma^\al\p^{k+1-\ell}_t w \partial_{t}^{k} wdx\rt|\\
 &+(1+t)^{1+\la-\dl{\bf1}_{\la<1}}\int \sigma^{\al+1}\left[\left(\t{\eta}_x+w_{x}\right)^{-\gamma-1}\right]_{t}\left(\partial_{t}^{k} w_{x}\right)^{2} d x \\
&+(1+t)^{1+\la-\dl{\bf1}_{\la<1}}\lt|\int \sigma^{\al+1}\left[\left(\t{\eta}_x+w_{x}\right)^{-\gamma-1}\right]_{t t} \partial_{t}^{k-1} w_{x} \partial_{t}^{k} w_{x} d x\rt|\\
&+(1+t)^{\la-\dl{\bf1}_{\la<1}}\lt|\int \sigma^{\al+1} J \partial_{t}^{k} w_{x} d x\rt|\\
&+(1+t)^{1+\la-\dl{\bf1}_{\la<1}}\lt|\int \sigma^{\al+1}\widetilde{J}_{t} \partial_{t}^{k} w_{x} d x\rt|\\
&:=\sum^6_{i=1} I_{i}.
\eali
\ee

Here, we don't take absolute value of term $I_3$ since we have the highest order derivative $\partial_{t}^{k} w_{x}$ and there is no smallness on $\t{\eta}_{x}$. We need to estimate it by combining the smallness of $w_{xt}$ and the sign of $\t{\eta}_{xt}$. Now we estimate $I_i\ (1\leq i\leq 6)$ term by term by using \eqref{ehenergy2} and the left of \eqref{ehweighted6}. Next we only consider the case for $\la<1$ and the proof below is still valid for $\la=1$ by simply replacing $\la=1$ and $\dl=0$. Also for simplicity of presentation and notations, sometimes we denote $\beta:=\la+1$.

\noindent {\bf Step two: Estimates of the low order and nonlinear terms}

For $I_1$, by using Cauchy-Schwartz inequality, we have, for a small constant $\nu$,
\bes
\bali
I_1\ls& \nu (1+t)^{1-\dl}\int\sigma^\al (\p^{k+1}_tw)^2dx\\
&+C_{\nu}\sum^{k}_{\ell=1} (1+t)^{1-2\ell-\dl}\int\sigma^\al (\p^{k+1-\ell}_tw)^2dx\\
\ls& \nu (1+t)^{1-\dl}\int\sigma^\al (\p^{k+1}_tw)^2dx\\
& +(1+t)^{-2k}C_{\nu}\sum^{k}_{\ell=1} (1+t)^{2(k+1-\ell)-1-\dl}\int\sigma^\al (\p^{k+1-\ell}_tw)^2dx\\
\ls& \nu (1+t)^{1-\dl}\int\sigma^\al (\p^{k+1}_tw)^2dx\\
& +(1+t)^{-2k}C_{\nu}\sum^{k}_{\ell=1} (1+t)^{2\ell-1-\dl}\int\sigma^\al (\p^{\ell}_tw)^2dx.
\eali
\ees
The first term of the above inequality can be absorbed by the left hand of \eqref{ehweighted6} and the time integral of the second term can be bounded by the initial data from the induction assumption \eqref{ehenergy2}.

For $I_2$, we can estimate it similarly with $I_1$ as follows
\bes
\bali
|I_2|\ls& (1+t)^{-1-\dl}\int\sigma^\al (\p^{k}_tw)^2dx\\
&+\sum^{k}_{\ell=1} (1+t)^{1-2\ell-\dl}\int\sigma^\al (\p^{k+1-\ell}_tw)^2dx\\
\ls&(1+t)^{-2k}\sum^{k}_{\ell=1} (1+t)^{2\ell-1-\dl}\int\sigma^\al (\p^{\ell}_tw)^2dx.
\eali
\ees
For term $I_3$, we first have
\be
\bali
I_3&:=(1+t)^{1+\la-\dl}\int\sigma^{\al+1}\left[\left(\t{\eta}_x+w_{x}\right)^{-\gamma-1}\right]_{t}\left(\partial_{t}^{k} w_{x}\right)^{2} d x\\
&=(-\g-1) (1+t)^{1+\la-\dl}\int\sigma^{\al+1}\left(\t{\eta}_x+w_{x}\right)^{-\gamma-2}(\t{\eta}_{xt}+w_{xt})\left(\partial_{t}^{k} w_{x}\right)^{2} d x\\
&\ls  (1+t)^{1+\la-\dl}\lt|\int\sigma^{\al+1}\left(\t{\eta}_x+w_{x}\right)^{-\gamma-2}w_{xt}\left(\partial_{t}^{k} w_{x}\right)^{2} d x\rt|.
\eali
\ee
Here we just throw away the term containing $\t{\eta}_{xt}$ due to its nonnegative property. Then we use \eqref{esobolev}, \eqref{ecor3}, and \eqref{ecor4} to obtain that

\be
\bali
I_3&\ls \|(1+t)^{2+\la}\left(\t{\eta}_x+w_{x}\right)^{-\gamma-2}\t{w}_{xt}\|_{L^\i} (1+t)^{-1-\dl}\lt| \int \sigma^{\al+1}\left(\partial_{t}^{k} w_{x}\right)^{2} d x\rt|\\
&\ls  \s{\e_0}(1+t)^{\f{\dl}{2}-\f{\la+1}{\g+1}} (1+t)^{-1-\dl}\lt|\int  \sigma^{\al+1}\left(\partial_{t}^{k} w_{x}\right)^{2} d x\rt|\\
&\ls \s{\e_0} (1+t)^{-1-\dl}\int  \sigma^{\al+1}\left(\partial_{t}^{k} w_{x}\right)^{2} d x,
\eali
\ee
which can be absorbed by the positive term on the lefthand of \eqref{ehweighted6} if $\e_0$ is small enough.

For term $I_4$, using \eqref{ecor3}, \eqref{ecor4} and \eqref{esobolev}, it is easy to deduce that
\bes
\left[\left(\t{\eta}_x+w_{x}\right)^{-\gamma-1}\right]_{t t}\ls (1+t)^{-2-\beta}+(1+t)^{-\beta-\f{\beta}{\g+1}}|w_{xtt}|.
\ees
Then using Cauchy-Schwartz inequality, we have
\bes
\bali
|I_4|&:=(1+t)^{1+\la-\dl}\lt|\int  \sigma^{\al+1}\left[\left(\t{\eta}_x+w_{x}\right)^{-\gamma-1}\right]_{t t} \partial_{t}^{k-1} w_{x} \partial_{t}^{k} w_{x} d x\rt|\\
&\ls (1+t)^{1+\la-\dl}\int \sigma^{\al+1}\left((1+t)^{-2-\beta}\rt.\\
&\qq\qq\qq\qq\lt.+(1+t)^{-\beta-\f{\beta}{\g+1}}|w_{xtt}|\right) |\partial_{t}^{k-1} w_{x}| | \partial_{t}^{k} w_{x}| d x\\
&\ls \nu(1+t)^{-1-\dl} \int  \sigma^{\al+1} \lt(\partial_{t}^{k} w_{x}\rt)^2 d x\\
&+C_\nu (1+t)^{-3-\dl} \int  \sigma^{\al+1} \lt(\partial_{t}^{k-1} w_{x}\rt)^2 d x \\
 &+ C_\nu(1+t)^{1-\f{2\beta}{\g+1}-\dl} \|\sigma^{1/2}w_{xtt}\|^2_{L^\i}\int\sigma^{\al} |\partial_{t}^{k-1} w_{x}| ^2 d x\\
 &\ls \nu(1+t)^{-1-\dl} \int  \sigma^{\al+1} \lt(\partial_{t}^{k} w_{x}\rt)^2 d x\\
 &+C_\nu (1+t)^{-2k} (1+t)^{2(k-1)-1-\dl}\int  \sigma^{\al+1} \lt(\partial_{t}^{k-1} w_{x}\rt)^2 d x\\
 &+C_\nu (1+t)^{-2k-1-\f{2\beta}{\g+1}+\dl}\int\sigma^{\al-1} |\partial_{t}^{k-1} w_{x}| ^2 d x,
 \eali
 \ees
 where at the last line we have used the boundedness of $\sigma$. From the definition of $\mathcal{E}_{j,i}$, we get
 \bes
 \bali
 |I_4|&\ls \nu(1+t)^{-1-\dl} \int  \sigma^{\al+1} \lt(\partial_{t}^{k} w_{x}\rt)^2 d x\\
 &+C_\nu (1+t)^{-2k} (1+t)^{2(k-1)-1-\dl}\int  \sigma^{\al+1} \lt(\partial_{t}^{k-1} w_{x}\rt)^2 d x\\
 &+ C_\nu (1+t)^{-2k-1^+}\mathcal{E}_{k-1,0}.
\eali
\ees
where $1^+$ is a constant bigger than $1$.

Now we come to estimate the terms involving $J$ and $\t{J}$, which is a little complicated.

It needs to bound $J,$ which contains lower-order terms involving $w_{x}, \ldots, \partial_{t}^{k-1} w_{x}$. Following from \eqref{ecor3}, \eqref{ecor4}, \eqref{esobolev} and \eqref{ej}, one has
\be\label{ehweighted3rr}
\begin{aligned}
J^{2} & \lesssim\left|\left[\left(\tilde{\eta}_{x}+w_{x}\right)^{-\gamma-1}\right]_{t}\right|^{2}\left(\partial_{t}^{k-1} w_{x}\right)^{2}+\tilde{J}^{2} \\
& \lesssim(1+t)^{-2\beta-2}\left(\partial_{t}^{k-1} w_{x}\right)^{2}+\widetilde{J}^{2}.
\end{aligned}
\ee

In view of \eqref{ecor3}, \eqref{ecor4}, \eqref{esobolev} and \eqref{ej1}, we have
\be\label{ehweighted2r}
\begin{aligned}
|\tilde{J}| \lesssim & \sum_{\ell=1}^{k-2}(1+t)^{-\beta-1-\ell}\left|\partial_{t}^{k-1-\ell} w_{x}\right|+(1+t)^{-\beta-\frac{\beta}{\gamma+1}}\sum^{k-2}_{\ell=2}\left|\partial_{t}^{\ell} w_{x}\right|\left|\partial_{t}^{k-\ell} w_{x}\right| \\
&+(1+t)^{-\beta-1-\frac{\beta}{\gamma+1}}\sum^{k-3}_{\ell=2}\left|\partial_{t}^{\ell} w_{x}\right|\left|\partial_{t}^{k-1-\ell} w_{x}\right| \\
&+(1+t)^{-\beta-2-\frac{\beta}{\gamma+1}}\sum^{k-4}_{\ell=2}\left|\partial_{t}^{\ell} w_{x}\right|\left|\partial_{t}^{k-2-\ell} w_{x}\right|+l.o.t..
\end{aligned}
\ee
Here and thereafter the notation $l .o . t.$ is used to represent the lower-order terms involving $\partial^{\ell}_{t} w_{x}$ with $\ell=2, \ldots, k-2 .$ It should be noticed that the second term on the right-hand side of \eqref{ehweighted2r} only appears as $k-2 \geq 2,$ the third term as $k-3 \geq 2$ and the fourth term as $k-4\geq 2$.


From \eqref{esobolev}, we have

\be\label{eetj}
\begin{aligned}
|\tilde{J}| \lesssim & \sum_{\ell=1}^{k-2}(1+t)^{-\beta-1-\ell}\left|\partial_{t}^{k-1-\ell} w_{x}\right|\\
 &+\s{\e_0}(1+t)^{-\beta-\frac{\beta}{\gamma+1}}\sum^{[\f{k-2}{2}]}_{\ell=2}\sigma^{-\f{\ell-1}{2}}(1+t)^{-\ell}\left|\partial_{t}^{k-\ell} w_{x}\right| \\
&+\s{\e_0}(1+t)^{-\beta-1-\frac{\beta}{\gamma+1}}\sum^{[\f{k-3}{2}]}_{\ell=2}\sigma^{-\f{\ell-1}{2}}(1+t)^{-\ell}\left|\partial_{t}^{k-1-\ell} w_{x}\right| \\
&+\s{\e_0}(1+t)^{-\beta-2-\frac{\beta}{\gamma+1}}\sum^{[\f{k-4}{2}]}_{\ell=2}\sigma^{-\f{\ell-1}{2}}(1+t)^{-\ell}\left|\partial_{t}^{k-2-\ell} w_{x}\right|.
\eali
\ee
So, using Cauchy-Schwartz inequality, we estimate $I_5$ as follows
\bes
\bali
|I_5|&\ls\lt|(1+t)^{\la-\dl}\int \sigma^{\al+1} J \partial_{t}^{k} w_{x} d x\rt|\\
&\ls \nu(1+t)^{-1-\dl}\int \sigma^{\al+1}  \lt(\partial_{t}^{k} w_{x}\rt)^2 d x+C_\nu (1+t)^{2\la+1-\dl}\int \sigma^{\al+1} J^2 d x.
\eali
\ees
While, from \eqref{ehweighted3rr},
\be\label{ej2}
\bali
&(1+t)^{2\la+1-\dl}\int \sigma^{\al+1} J^2 d x\\
\ls& (1+t)^{2\la+1-\dl}\int \sigma^{\al+1} \lt[(1+t)^{-2\beta-2}\left(\partial_{t}^{k-1} w_{x}\right)^{2}+\widetilde{J}^{2}\rt] d x\\
\ls&(1+t)^{-3-\dl}\int \sigma^{\al+1} \left(\partial_{t}^{k-1} w_{x}\right)^{2}dx+(1+t)^{2\la+1-\dl}\int \sigma^{\al+1} \widetilde{J}^{2} d x.
\eali
\ee
This implies that
\be\label{ehweighted7}
\bali
|I_5|\ls& \nu(1+t)^{-1-\dl}\int \sigma^{\al+1}  \lt(\partial_{t}^{k} w_{x}\rt)^2 d x\\
&+C_\nu (1+t)^{-3-\dl}\int \sigma^{\al+1}  \lt(\partial_{t}^{k-1} w_{x}\rt)^2 d x\\
  & +(1+t)^{2\la+1-\dl}\int \sigma^{\al+1} \t{J}^{2} d x.
\eali
\ee
Next we estimate the term involving $\t{J}$ as follows by using \eqref{eetj}
\be
\bali
 &(1+t)^{2\la+1-\dl}\int \sigma^{\al+1} \t{J}^{2} d x\\
\ls & \sum_{\ell=1}^{k-2}(1+t)^{-3-2\ell-\dl}\int \sigma^{\al+1}\left|\partial_{t}^{k-1-\ell} w_{x}\right|dx\\
 &+\e_0(1+t)^{-1-\frac{2\beta}{\gamma+1}-\dl}\sum^{[\f{k-2}{2}]}_{\ell=2}\int\sigma^{\al-\ell+2}(1+t)^{-2\ell}\left(\partial_{t}^{k-\ell} w_{x}\right)^2dx \\
&+\e_0(1+t)^{-3-\frac{2\beta}{\gamma+1}-\dl}\sum^{[\f{k-3}{2}]}_{\ell=2}\int\sigma^{\al-\ell+2}(1+t)^{-2\ell}\left(\partial_{t}^{k-1-\ell} w_{x}\right)^2dx \\
&+\e_0(1+t)^{-5-\frac{2\beta}{\gamma+1}-\dl}\sum^{[\f{k-4}{2}]}_{\ell=2}\int\sigma^{\al-\ell+2}(1+t)^{-2\ell}\left(\partial_{t}^{k-2-\ell} w_{x}\right)dx\\
\ls &(1+t)^{-2k}\sum_{\ell=1}^{k-2}(1+t)^{2(k-1-\ell)-1-\dl}\int \sigma^{\al+1}\left|\partial_{t}^{k-1-\ell} w_{x}\right|dx\\
 &+\e_0(1+t)^{-1-\frac{2\beta}{\gamma+1}-\dl}\sum^{[\f{k}{2}]}_{\ell=2}\int\sigma^{\al-\ell+2}(1+t)^{-2\ell}\left(\partial_{t}^{k-\ell} w_{x}\right)^2dx, \\
\eali
\ee
where at the fourth and fifth line of the above inequality, we have viewed $\ell+1$ and $\ell+2$ to be the new $\ell$ and used the fact that $|\sigma|\leq A$.
In view of the Hardy inequality \eqref{ehardy}, we see that for $\ell=2, \ldots,[k / 2]$, $\al+2-\ell>-1$, then we have
\bes
\begin{aligned}
\int \sigma^{\alpha+2-\ell}\left|\partial_{t}^{k-\ell} w_{x}\right|^{2} d x \lesssim &\int \sigma^{\alpha+2-\ell+2}\sum^{2}_{n=1}\left(\partial_{t}^{k-\ell} \p^{n}_x w\right)^{2} d x \\
 &\underbrace{\cdots\cdots}_{\ell-2\ times}\\
 \lesssim&  \int \sigma^{\alpha+\ell}\sum^{\ell}_{n=1}\left(\partial_{t}^{k-\ell} \p^{n}_x w\right)^{2} d x \\
 \lesssim&  \sum^{\ell}_{n=1}\int \sigma^{\alpha+n}\left(\partial_{t}^{k-\ell} \p^{n}_x w\right)^{2} d x\\
 \lesssim & \sum^{\ell}_{n=1}(1+t)^{-2 (k-\ell)+\dl} \mathcal{E}_{k-\ell, n-1}.
\end{aligned}
\ees
The above two inequalities indicate that
\bes
\bali
&(1+t)^{2\la+1-\dl}\int \sigma^{\al+1} \t{J}^2 d x\\
\ls&  (1+t)^{-2k}\sum_{\ell=1}^{k-2}(1+t)^{2\ell-1-\dl}\int \sigma^{\al+1}\left|\partial_{t}^{\ell} w_{x}\right|dx\\
 &+\e_0(1+t)^{-2k-1-\frac{2\beta}{\gamma+1}}\sum^{[\f{k}{2}]}_{\ell=2} \sum^{\ell}_{n=1} \mathcal{E}_{k-\ell, n-1}\\
\ls& (1+t)^{-2k}\sum_{\ell=1}^{k-2}(1+t)^{2\ell-1-\dl}\int \sigma^{\al+1}\left(\partial_{t}^{\ell} w_{x}\right)^2dx\\
 &+\e_0(1+t)^{-2k-1-\frac{2\beta}{\gamma+1}}\sum^{k-1}_{\ell=0} \mathcal{E}_{\ell},
\eali
\ees
where we have used Proposition \ref{pelliptic}.
Combining the above inequality and \eqref{ehweighted7}, we can get
\bes
\bali
|I_5|&\ls \nu(1+t)^{-1-\dl}\int\sigma^{\al+1}  \lt(\partial_{t}^{k} w_{x}\rt)^2 d x\\
&+(1+t)^{-2k}\sum_{\ell=1}^{k-1}(1+t)^{2\ell-1-\dl}\int \sigma^{\al+1}\left(\partial_{t}^{\ell} w_{x}\right)^2dx\\
 &+\e_0(1+t)^{-2k-1-\frac{2\beta}{\gamma+1}} \sum^{k-1}_{\ell=0}\mathcal{E}_{\ell}.
\eali
\ees
Next we come to deal with the term $I_6$ involving $\t{J}_t$. The same estimate with that for $\t{J}$, We can have
\bes
\begin{aligned}
|\tilde{J}_t| \lesssim & \sum_{\ell=1}^{k}(1+t)^{-\beta-1-\ell}\left|\partial_{t}^{k-\ell} w_{x}\right|+(1+t)^{-\beta-\frac{\beta}{\gamma+1}}\sum^{k-1}_{\ell=2}\left|\partial_{t}^{\ell} w_{x}\right|\left|\partial_{t}^{k+1-\ell} w_{x}\right| \\
&+(1+t)^{-\beta-1-\frac{\beta}{\gamma+1}}\sum^{k-2}_{\ell=2}\left|\partial_{t}^{\ell} w_{x}\right|\left|\partial_{t}^{k-\ell} w_{x}\right| \\
&+(1+t)^{-\beta-2-\frac{\beta}{\gamma+1}}\sum^{k-3}_{\ell=2}\left|\partial_{t}^{\ell} w_{x}\right|\left|\partial_{t}^{k-1-\ell} w_{x}\right|+l.o.t.,
\end{aligned}
\ees
which implies, by using \eqref{esobolev}, that
\bes
\begin{aligned}
|\tilde{J}_t| \lesssim & \sum_{\ell=1}^{k}(1+t)^{-\beta-1-\ell}\left|\partial_{t}^{k-\ell} w_{x}\right|\\
 &+\s{\e_0}(1+t)^{-\beta-\frac{\beta}{\gamma+1}}\sum^{[\f{k-1}{2}]}_{\ell=2}\sigma^{-\f{\ell-1}{2}}(1+t)^{-\ell}\left|\partial_{t}^{k+1-\ell} w_{x}\right| \\
&+\s{\e_0}(1+t)^{-\beta-1-\frac{\beta}{\gamma+1}}\sum^{[\f{k-2}{2}]}_{\ell=2}\sigma^{-\f{\ell-1}{2}}(1+t)^{-\ell}\left|\partial_{t}^{k-\ell} w_{x}\right| \\
&+\s{\e_0}(1+t)^{-\beta-2-\frac{\beta}{\gamma+1}}\sum^{[\f{k-3}{2}]}_{\ell=2}\sigma^{-\f{\ell-1}{2}}(1+t)^{-\ell}\left|\partial_{t}^{k-1-\ell} w_{x}\right|\\
&\ls\sum_{\ell=1}^{k}(1+t)^{-\beta-1-\ell}\left|\partial_{t}^{k-\ell} w_{x}\right|\\
 &+\s{\e_0}(1+t)^{-\beta-\frac{\beta}{\gamma+1}}\sum^{[\f{k-1}{2}]}_{\ell=2}\sigma^{-\f{\ell-1}{2}}(1+t)^{-\ell}\left|\partial_{t}^{k+1-\ell} w_{x}\right| \\
 &+\s{\e_0}(1+t)^{-\beta-1-\frac{\beta}{\gamma+1}}\sum^{[\f{k-1}{2}]}_{\ell=2}\sigma^{-\f{\ell-1}{2}}(1+t)^{-\ell}\left|\partial_{t}^{k-\ell} w_{x}\right|.
\end{aligned}
\ees
At the fourth line of the above inequality, we have viewed $\ell+1$ to be the new $\ell$. Then we estimates $I_6$ as follows,
\be\label{etj3}
\bali
|I_6|&:=\lt|\int \sigma^{\al+1}(1+t)^{1+\la-\dl}\t{J}_t \partial_{t}^{k} w_{x} d x\rt|\\
&\ls \nu\int \sigma^{\al+1}(1+t)^{-1-\dl}\lt( \partial_{t}^{k} w_{x}\rt)^2 d x+C_\nu (1+t)^{2\la+3-\dl}\int \sigma^{\al+1}|\t{J}_t|^2  d x.
\eali
\ee
And
\be\label{etj}
\bali
&(1+t)^{2\la+3-\dl}\int \sigma^{\al+1}|\t{J}_t|^2  d x\\
\ls&\sum_{\ell=1}^{k}(1+t)^{-1-2\ell-\dl}\int \sigma^{\al+1}\left(\partial_{t}^{k-\ell} w_{x}\right)^2dx\\
 &+\e_0(1+t)^{1-\frac{2\beta}{\gamma+1}-\dl} \sum^{[\f{k-1}{2}]}_{\ell=2}\int\sigma^{\al+2-\ell}(1+t)^{-2\ell}\left(\partial_{t}^{k+1-\ell} w_{x}\right)^2dx \\
 &+\e_0(1+t)^{-1-\frac{2\beta}{\gamma+1}-\dl}\sum^{[\f{k-1}{2}]}_{\ell=2}\int\sigma^{\al+2-\ell}(1+t)^{-2\ell}\left(\partial_{t}^{k-\ell} w_{x}\right)^2dx.
 \eali
\ee
Again using \eqref{ehardy} repeatedly, we can have

\bes
\bali
&\int\sigma^{\al+2-\ell}(1+t)^{-2\ell}\left(\partial_{t}^{k+1-\ell} w_{x}\right)^2dx\\
 \ls &\sum^{\ell}_{n=1}(1+t)^{-2 (k+1-\ell)+\dl} \mathcal{E}_{k+1-\ell, n-1}, \eali
\ees
and
\bes
\bali
&\int\sigma^{\al+2-\ell}(1+t)^{-2\ell}\left(\partial_{t}^{k+1-\ell} w_{x}\right)^2dx\\
&+\int\sigma^{\al+2-\ell}(1+t)^{-2\ell}\left(\partial_{t}^{k-\ell} w_{x}\right)^2dx\\
 \ls &\sum^{\ell}_{n=1}(1+t)^{-2 (k-\ell)+\dl} \mathcal{E}_{k-\ell, n-1}.
 \eali
\ees
Inserting the above two inequalities into \eqref{etj}, we have

\be\label{etj1}
\bali
&(1+t)^{2\la+3-\dl}\int \sigma^{\al+1}|\t{J}_t|^2  d x\\
 \ls&(1+t)^{-2k}\sum_{\ell=0}^{k-1}(1+t)^{2\ell-1-\dl}\int \sigma^{\al+1}\left(\partial_{t}^\ell w_{x}\right)^2dx\\
 &+\e_0(1+t)^{-2k-1-\frac{2\beta}{\gamma+1}}\sum^{[\f{k-1}{2}]}_{\ell=2}\sum^\ell_{n=1}\lt(\mathcal{E}_{k+1-\ell,n-1}+\mathcal{E}_{k-\ell,n-1}\rt)\\
 \ls&(1+t)^{-2k}\sum_{\ell=1}^{k-1}(1+t)^{2\ell-1-\dl}\int \sigma^{\al+1}\left(\partial_{t}^{\ell} w_{x}\right)^2dx\\
 &+\e_0(1+t)^{-2k-1-\frac{2\beta}{\gamma+1}}\sum^k_{\ell=0}\mathcal{E}_{\ell}.
 \eali
\ee

Since there appears $\mathcal{E}_k$ on the right hand of the above inequality, which can not either be absorbed by the left hand of \eqref{ehweighted6} or be controlled by the induction assumption \eqref{ehenergy2}. We calculate it further more as follows.

\be\label{etj2}
\bali
&(1+t)^{-2k-1-\frac{2\beta}{\gamma+1}}\mathcal{E}_{k}\\
=&(1+t)^{-1-\frac{2\beta}{\gamma+1}-\dl}\int \lt[(1+t)^\beta\sigma^\al \p^{k+1}_t w+\sigma^{\al+1} (\p^{k}_t w_x)^2\rt]dx\\
 &+(1+t)^{-1-\frac{2\beta}{\gamma+1}-\dl}\int \sigma^{\al} (\p^{k}_t w)^2dx\\
\ls&\int \lt[(1+t)^{1-\dl}\sigma^\al \p^{k+1}_t w+(1+t)^{-1-\dl}\sigma^{\al+1} (\p^{k}_t w_x)^2\rt]dx\\
&+(1+t)^{-2k}(1+t)^{2(k-1)+1-\dl}\int \sigma^{\al} (\p^{k}_t w)^2dx.
\eali
\ee
Then combining \eqref{etj1}, \eqref{etj2} and \eqref{etj3}, we obtain
\bes
\bali
|I_6|\ls& (\nu+\e_0)\int\lt[(1+t)^{1-\dl}\sigma^\al \p^{k+1}_t w+(1+t)^{-1-\dl}\sigma^{\al+1} (\p^{k}_t w_x)^2\rt]dx\\
&+\e_0(1+t)^{-2k-1-\frac{2\beta}{\gamma+1}}\sum^{k-1}_{\ell=0}\mathcal{E}_{\ell}\\
&+(1+t)^{-2k}\sum_{\ell=0}^{k-1}\int(1+t)^{2\ell-1-\dl} \sigma^{\al+1}\lt(\partial_{t}^{\ell} w_{x}\right)^2dx\\
&+(1+t)^{-2k}\sum_{\ell=0}^{k-1}\int(1+t)^{2\ell+1-\dl} \sigma^{\al} \left(\partial_{t}^{\ell+1} w\right)^2dx. \eali
\ees

\noindent{\bf Step three: Finishing proof of Lemma \ref{ehweightede}}

From all the above estimates for terms $I_1$ to $I_6$, we get that

\bes
\bali
&\f{d}{dt}\int \lt[{\mathfrak{E}}_k(t)+\sigma^{\al+1} (1+t)^{1+\la-\dl}J \partial_{t}^{k} w_{x}\rt]dx\\
 &+ \int\lt[ (1+t)^{1-\dl}\sigma^\al (\p^{k+1}_tw)^{2} +(1+t)^{-1-\dl}\sigma^{\al+1}(\p^{k}_tw_x)^{2}\rt] d x\\
\ls& (\nu+\e_0)\int\lt[ (1+t)^{1-\dl}\sigma^\al (\p^{k+1}_tw)^{2} +(1+t)^{-1-\dl}\sigma^{\al+1}(\p^{k}_tw_x)^{2}\rt] d x\\
&+ (1+t)^{-2 k-1^+}\sum^{k-1}_{\ell=0}\mathcal{E}_{\ell}+(1+t)^{-2 k}\sum^{k-1}_{\ell=0}\int\lt[ (1+t)^{2\ell+1-\dl}\sigma^\al (\p^{\ell+1}_tw)^{2}\rt.\\
 &\lt.\qq\qq\q +(1+t)^{2\ell-1-\dl}\sigma^{\al+1}(\p^{\ell}_tw_x)^{2}\rt] d x,
\eali
\ees
where $1^+$ is some constant bigger than $1$.
Then we get by choosing small $\nu$, for some large $N$, to be determined later,
\bes
\bali
&\f{d}{dt}\int \lt[{\mathfrak{E}}_k(t)+\sigma^{\al+1} (1+t)^{1+\la-\dl}J \partial_{t}^{k} w_{x}\rt]dx\\
 &+ N\int\lt[ (1+t)^{1-\dl}\sigma^\al (\p^{k+1}_tw)^{2} +(1+t)^{-1-\dl}\sigma^{\al+1}(\p^{k}_tw_x)^{2}\rt] d x\\
\ls&  (1+t)^{-2 k-1^+}\sum^{k-1}_{\ell=0}\mathcal{E}_{\ell}+(1+t)^{-2 k}\sum^{k-1}_{\ell=0}\int\lt[ (1+t)^{2\ell+1-\dl}\sigma^\al (\p^{\ell+1}_tw)^{2}\rt.\\
 &\lt.\qq\qq\q +(1+t)^{2\ell-1-\dl}\sigma^{\al+1}(\p^{\ell}_tw_x)^{2}\rt] d x.
\eali
\ees
Multiplying the above inequality by $(1+t)^{2 k}$, we can get
\be\label{ehweighted10}
\bali
&\f{d}{dt}\lt\{(1+t)^{2 k}\int\lt[{\mathfrak{E}}_k(t)+\sigma^{\al+1} (1+t)^{1+\la-\dl}J \partial_{t}^{k} w_{x}\rt]dx\rt\}\\
& -2 k(1+t)^{2 k-1}\int\lt[{\mathfrak{E}}_k(t)+\sigma^{\al+1} (1+t)^{1+\la-\dl}J \partial_{t}^{k} w_{x}\rt]dx\\
&+ N\int\lt[ \sigma^\al (1+t)^{2 k+1-\dl}(\p^{k+1}_tw)^{2} +(1+t)^{2 k-1-\dl}\sigma^{\al+1}(\p^{k}_tw_x)^{2}\rt] d x\\
\ls&  (1+t)^{-1^+}\sum^{k-1}_{\ell=0}\mathcal{E}_{\ell}\\
 &+ \sum^{k-1}_{\ell=0}\int\lt[ (1+t)^{2\ell+1-\dl}\sigma^\al (\p^{\ell+1}_tw)^{2} +(1+t)^{2\ell-1-\dl}\sigma^{\al+1}(\p^{\ell}_tw_x)^{2}\rt] d x.
\eali
\ee
For the term $\sigma^{\al+1}(1+t)^{1+\la-\dl}J\p^k_t w_x$, the same estimate as \eqref{ej2} implies that
\bes
\bali
&\lt|(1+t)^{1+\la-\dl}\int \sigma^{\al+1}J\p^k_t w_xdx\rt|\\
&\ls \nu(1+t)^{-\dl}\int \sigma^{\al+1}  \lt(\partial_{t}^{k} w_{x}\rt)^2 d x\\
& +(1+t)^{-2k+1}\sum_{\ell=1}^{k-2}(1+t)^{2\ell-\dl}\int \sigma^{\al+1}\left(\partial_{t}^\ell w_{x}\right)^2dx\\
 &+\e^2_0(1+t)^{-2k-\frac{2\beta}{\gamma+1}}\sum^{k-1}_{\ell=0}\mathcal{E}_{\ell}.
\eali
\ees
From this, we have
\be\label{ehweighted9}
\bali
&(1+t)^{2k-1}\int\lt[{\mathfrak{E}}_k(t)+\sigma^{\al+1} (1+t)^{1+\la-\dl}J \partial_{t}^{k} w_{x}\rt]dx\\
&\ls \int \lt\{\lt[(1+t)^{2k+\la-\dl} \sigma^{\al} (\p^{k+1}_tw)^{2}+(1+t)^{2k-1-\dl}\sigma^{\al+1}(\p^k_tw_x)^2\rt]\rt.\\
&+\lt. (1+t)^{2k-1-\dl}\sigma^{\al} (\p^{k}_tw)^{2}\rt\}dx\\
&+\sum_{\ell=1}^{k-2}(1+t)^{2\ell-1-\dl}\int \sigma^{\al+1}\left(\partial_{t}^{\ell} w_{x}\right)^2dx\\
&+\e_0(1+t)^{-1-\frac{2\beta}{\gamma+1}}\sum^{k-1}_{\ell=0}\mathcal{E}_{\ell},
\eali
\ee
and
\be\label{ehweighted9r}
\bali
&(1+t)^{2k}\int\lt[{\mathfrak{E}}_k(t)+\sigma^{\al+1} (1+t)^{1+\la-\dl}J \partial_{t}^{k} w_{x}\rt]dx\\
&\thickapprox \mathcal{E}_{k}-\sum^{k-1}_{\ell=0}\mathcal{E}_{\ell}.
\eali
\ee
Inserting \eqref{ehweighted9} into \eqref{ehweighted10} and by choosing sufficiently large $N$, we can get

\bes
\bali
&\f{d}{dt}\lt\{(1+t)^{2k}\int\lt[{\mathfrak{E}}_k(t)+\sigma^{\al+1} (1+t)^{1+\la-\dl}J \partial_{t}^{k} w_{x}\rt]dx\rt\}\\
&+\int\lt[(1+t)^{2k+1-\dl} \sigma^{\al} (\p^{k+1}_tw)^{2}+(1+t)^{2k-1-\dl}\sigma^{\al+1}(\p^k_tw_x)^2\rt]\\
\ls&  (1+t)^{-1^+}\sum^{k-1}_{\ell=0}\mathcal{E}_{\ell}(t)\\
&+ \sum^{k-1}_{\ell=0}\int\lt[ (1+t)^{2\ell+1-\dl}\sigma^\al (\p^{\ell+1}_tw)^{2} +(1+t)^{2\ell-1-\dl}\sigma^{\al+1}(\p^{\ell}_tw_x)^{2}\rt] d x\\
\ls&  (1+t)^{-1^+}\sum^{k-1}_{\ell=0}\mathcal{E}_{\ell}(0)\\
&+ \sum^{k-1}_{\ell=0}\int\lt[ (1+t)^{2\ell+1-\dl}\sigma^\al (\p^{\ell+1}_tw)^{2} +(1+t)^{2\ell-1-\dl}\sigma^{\al+1}(\p^{\ell}_tw_x)^{2}\rt] d x.
\eali
\ees
Integrating the above inequality from $0$ to $t$ and remembering \eqref{ehweighted9r} and \eqref{ehenergy2}, we can get
\bes
\bali
&\mathcal{E}_{k}+ \int^t_0\lt[\int(1+\tau)^{2k+1-\dl} \sigma^\al (\p^{k+1}_tw)^{2} +(1+\tau)^{2k-1-\dl}\sigma^{\al+1}(\p^{k}_tw_x)^{2}\rt] d xd\tau\\
\ls&(1+t)^{2k}\int\lt[{\mathfrak{E}}_k(t)+\sigma^{\al+1} (1+t)^{1+\la-\dl}J \partial_{t}^{k} w_{x}\rt]dx+\sum^{k}_{\ell=0}\mathcal{E}_{\ell}(0)\\
&+ \int^t_0\lt[\int(1+\tau)^{2k+1-\dl} \sigma^\al (\p^{k+1}_tw)^{2} +(1+\tau)^{2k-1-\dl}\sigma^{\al+1}(\p^{k}_tw_x)^{2}\rt] d xd\tau\\
\ls&  \sum^{k}_{\ell=0}\mathcal{E}_{\ell}(0)+ \sum_{\ell=0}^{k-1}\int^t_0(1+\tau)^{2\ell-1-\dl}\int \sigma^{\al+1}\left(\partial_{t}^{\ell} w_{x}\right)^2dxd\tau\\
& +\sum_{\ell=0}^{k-1}\int^t_0(1+\tau)^{2\ell+1-\dl}\int \sigma^{\al}\left(\partial_{t}^{\ell+1} w\right)^2dxd\tau\\
\ls&  \sum^{k}_{\ell=0}\mathcal{E}_{\ell}(0).
\eali
\ees
This finishes the proof of Lemma \ref{ehweightede}.

\end{proof}

Then Propositions \ref{pelliptic} and Proposition \ref{proweightedenergy} together imply \eqref{eweight1}, which proves Theorem \ref{thmain} by continuation argument.
\section{Proof of Theorem \ref{thoriginal}}

\begin{proof} In this section, we prove Theorem \ref{thoriginal}. First, it follows from \eqref{emass}, \eqref{ecorf1}, and \eqref{eerro} that for $(x, t) \in \mathcal{I} \times[0, \infty)$
\[
\rho(\eta(x, t), t)-\bar{\rho}(\bar{\eta}(x, t), t)=\frac{\bar{\rho}_{0}(x)}{\eta_{x}(x, t)}-\frac{\bar{\rho}_{0}(x)}{\bar{\eta}_{x}(x, t)}=
-\bar{\rho}_{0}(x) \frac{w_{x}(x, t)+h(t)}{\left(\tilde{\eta}_{x}+w_{x}\right) \bar{\eta}_{x}(x, t)} ,
\]
and
\[
u(\eta(x, t), t)-\bar{u}(\bar{\eta}(x, t), t)=w_{t}(x, t)+x h_{t}(t) .
\]
Hence, by virtue of \eqref{elinfinity}, \eqref{ecor3}, \eqref{ecor4} and the boundedness of $h$, we have, for $(x, t) \in \mathcal{I} \times[0, \infty)$,
\bes \begin{aligned}
&|\rho(\eta(x, t), t)-\bar{\rho}(\bar{\eta}(x, t), t)| \\
\lesssim &\left(A-B x^{2}\right)^{\frac{1}{\gamma-1}}(1+t)^{-\frac{2(\la+1)}{\gamma+1}}\left((1+t)^{\f{\dl}{2}{\bf1}_{\la<1}}\sqrt{\mathcal{E}(0)}+1\right)\\
\ls&  \left(A-B x^{2}\right)^{\frac{1}{\gamma-1}}(1+t)^{-\frac{2(\la+1)}{\gamma+1}+\f{\dl}{2}{\bf1}_{\la<1}}.
\end{aligned}
\ees and
\bes
\bali
&|u(\eta(x, t), t)-\bar{u}(\bar{\eta}(x, t), t)|\\
 \lesssim&(1+t)^{-1+\f{\dl}{2}{\bf1}_{\la<1}} \sqrt{\mathcal{E}(0)}+(1+t)^{\f{\la-\dl}{\g+1}}\\
\ls& (1+t)^{\f{\la-\dl}{\g+1}}.
\eali
\ees
 Then \eqref{eod} and \eqref{eov} follow. It follows from \eqref{ean1}, \eqref{ecorf1} and \eqref{eerro} that
\bes
\begin{aligned}
x_{\pm}(t)=\eta\left(\bar{x}_{+}(0), t\right) &=(\tilde{\eta}+w)\left(\bar{x}_{\pm}(0), t\right) \\
&=(\bar{\eta}+x h+w)\left(\bar{x}_{+}(0), t\right) \\
&=\pm\sqrt{A B^{-1}}\left((1+t)^{\frac{\la+1}{\gamma+1}}+h(t)\right)+w(\sqrt{A B^{-1}}, t).
\end{aligned}
\ees
 Again using the boundedness of $h$ and \eqref{elinfinity}, we have
\bes
\bali
\pm\sqrt{A B^{-1}}(1+t)^{\frac{\la+1}{\gamma+1}}&-C\lt((1+t)^{\f{\dl}{2}{\bf1}_{\la<1}}\s{\mathcal{E}(0)}+1\rt)\\
                               \ls& x_{\pm}(t)\ls\\
\pm\sqrt{A B^{-1}}(1+t)^{\frac{\la+1}{\gamma+1}}&+C\lt((1+t)^{\f{\dl}{2}{\bf1}_{\la<1}}\s{\mathcal{E}(0)}+1\rt),
\eali
\ees
which implies \eqref{eob}. For $k=1,2,3$
\bes
\f{d^k x_{\pm}(t)}{dt^k}=\p^k_t\t{\eta}(\pm\s{AB^{-1}},t)+\p^k_t{w}(\pm\s{AB^{-1}},t). \ees
So using \eqref{ecor3}, \eqref{ecor4} and \eqref{elinfinity}, we get \eqref{eobd}.
\end{proof}

\begin{appendix}
\section{Estimates for ${\t{\eta}_x}$ and ${h}$}

In this appendix, we prove \eqref{ecor3} and \eqref{ecor4}. The idea of proof follows the line with that in Appendix of \cite{LZ:2016CPAM}. We may write \eqref{ecorf} as the following system:
\bes
\left\{
\bali
&h_{t}=z, \\
&z_{t}=-\f{\mu}{(1+t)^\la}z-\f{\mu(\la+1)}{\g+1}\left[\bar{\eta}_{x}^{-\gamma}-\left(\bar{\eta}_{x}+h\right)^{-\gamma}\right]-\bar{\eta}_{x t t}, \\
&(h, z)|_{t=0}=(0,0).
\eali
\right.
\ees

Recalling that $\bar{\eta}_{x}(t)=(1+t)^{\f{\la+1}{\g+1}},$ we have $\bar{\eta}_{x t t}<0 .$ A simple phase plane analysis shows that there exist $0<t_{0}<t_{1}<t_{2}$ such that, starting from $(h, z)=$
(0,0) at $t=0, h$ and $z$ increase in the interval $\left[0, t_{0}\right]$ and $z$ reaches its positive maximum at $t_{0} ;$ in the interval $\left[t_{0}, t_{1}\right], h$ keeps increasing and reaches its maximum at $t_{1}, z$ decreases from its positive maximum to $0 ;$ in the interval $\left[t_{1}, t_{2}\right],$ both $h$ and
$z$ decrease, and $z$ reaches its negative minimum at $t_{2} ;$ in the interval $\left[t_{2}, \infty\right), h$ decreases and $z$ increases, and $(h, z) \rightarrow(0,0)$ as $t \rightarrow \infty .$ This can be summarized as follows:
\[
\begin{array}{ll}
z(t) \uparrow_{0}, & h(t) \uparrow_{0}, \quad t \in\left[0, t_{0}\right] \\
z(t) \downarrow_{0}, & h(t) \uparrow, \quad t \in\left[t_{0}, t_{1}\right] \\
z(t) \downarrow^{0}, & h(t) \downarrow, \quad t \in\left[t_{1}, t_{2}\right] \\
z(t) \uparrow^{0}, & h(t) \downarrow_{0}, \quad t \in\left[t_{2}, \infty\right).
\end{array}
\]
It follows from the above analysis that there exists a constant $C=C(\la,\mu,\gamma, M)$ such that
\[
0 \leq h(t) \leq C \quad \text { for } t \geq 0 .
\]
In view of \eqref{ecorf1}, we then see that for some constant $K>0$
\[
(1+t)^{ \f{\la+1}{\g+1}} \leq \tilde{\eta}_{x} \leq K(1+t)^{ \f{\la+1}{\g+1}}.
\]
To derive the decay property, we may rewrite \eqref{ecorf2} as
\be\label{eapp2}
\lt\{
\bali
&\tilde{\eta}_{x t t}+\f{\mu}{(1+t)^\la}\tilde{\eta}_{x t}-\f{\mu(\la+1)}{\g+1}\tilde{\eta}_{x}^{-\gamma}=0,\\
&\tilde{\eta}_{x}|_{t=0}=1, \quad \tilde{\eta}_{x t}|_{t=0}=\f{\la+1}{\g+1}.
\eali
\rt.
\ee
Next we give the proof of \eqref{ecor3} and \eqref{ecor4} separately. The general ideas are the same.

\noindent{\bf Case 1: $\boldsymbol{\la<1}$:}

Then, we have by solving \eqref{eapp2} that
\be\label{eapp3}
\bali
\tilde{\eta}_{x t}(t)=&\f{\la+1}{\g+1}e^{-\f{\mu}{1-\la}(1+t)^{1-\la}}\\
 &+\frac{(\la+1)\mu}{\gamma+1} \int_{0}^{t} e^{\f{\mu}{1-\la}\lt[(1+s)^{1-\la}-(1+t)^{1-\la}\rt]}\tilde{\eta}_{x}^{-\gamma}(s) d s\\
  \geq& 0.
\eali
\ee
Next, we use the induction to prove $\eqref{ecor3}$. First, it follows from
\eqref{eapp3} that
\bes
\bali
\tilde{\eta}_{x t}(t) \les &e^{-\f{\mu}{1-\la}(1+t)^{1-\la}}+\int_{0}^{t} e^{\f{\mu}{1-\la}\lt[(1+s)^{1-\la}-(1+t)^{1-\la}\rt]}(1+s)^{-\f{(\la+1)\g}{\g+1}} d s\\
             \ls& e^{-\f{\mu}{1-\la}(1+t)^{1-\la}}+\lt\{\int_{0}^{t/2}+\int_{t/2}^{t}\rt\}e^{\f{\mu}{1-\la}\lt[(1+s)^{1-\la}-(1+t)^{1-\la}\rt]}(1+s)^{-\f{(\la+1)\g}{\g+1}} d s\\
             \ls& e^{-\f{\mu}{1-\la}(1+t)^{1-\la}}+e^{-c(1+t)^{1-\la}}\int_{0}^{t/2}(1+s)^{-\f{(\la+1)\g}{\g+1}} ds\\
             &+(1+t)^{-\f{(\la+1)\g}{\g+1}}\int_{t/2}^{t}e^{\f{\mu}{1-\la}\lt[(1+s)^{1-\la}-(1+t)^{1-\la}\rt]} d s\\
                       \les& e^{-\f{\mu}{1-\la}(1+t)^{1-\la}}+(1+t)^{-\f{(\la+1)\g}{\g+1}+\la} \\
                       \les&  (1+t)^{\f{\la+1}{\g+1}-1}.
 \eali
\ees
This proves $\eqref{ecor3}$ for $k=1 .$ For $k\geq 2$, we make the induction hypothesis that
\eqref{ecor3} holds for all $\ell=1,2, \ldots, k-1,$ that is
\be\label{eapp4}
\left|\frac{d^{\ell} \tilde{\eta}_{x}(t)}{d t^{\ell}}\right| \leq C_k (1+t)^{\frac{\la+1}{\gamma+1}-\ell}, \quad \ell=1,2, \ldots, k-1.
\ee
It suffices to prove \eqref{eapp4} holds for $\ell=k .$ We derive from \eqref{eapp2} that
\bes
\bali
&\frac{d^{k+1} \tilde{\eta}_{x}}{d t^{k+1}}(t)+\f{\mu}{(1+t)^\la}\frac{d^{k} \tilde{\eta}_{x}}{d t^{k}}(t)\\
=&\frac{(\la+1)\mu}{\gamma+1} \frac{d^{k-1} \tilde{\eta}_{x}^{-\gamma}}{d t^{k-1}}(t)-\mu\sum^{k-1}_{\ell=1}C^\ell_{k-1}\f{d^\ell (1+t)^{-\la}}{dt^\ell}\f{d^{k-\ell} \tilde{\eta}_{x}}{dt^{k-\ell}}.
\eali
\ees
Solving this ODE gives that
\be\label{eapp5}
\bali
&\frac{d^{k} \tilde{\eta}_{x}}{d t^{k}}(t)\\
=&e^{-\f{\mu}{1-\la}(1+t)^{1-\la}}\frac{d^{k} \tilde{\eta}_{x}}{d t^{k}}(0)\\
                              &+\frac{(\la+1)\mu}{\gamma+1}\int^t_0  e^{\f{\mu}{1-\la}\lt[(1+s)^{1-\la}-(1+t)^{1-\la}\rt]}\frac{d^{k-1} \tilde{\eta}_{x}^{-\gamma}}{d t^{k-1}}(s)ds\\
                               &-\mu\int^t_0  e^{\f{\mu}{1-\la}\lt[(1+s)^{1-\la}-(1+t)^{1-\la}\rt]}\sum^{k-1}_{\ell=1}C^\ell_{k-1}\f{d^\ell (1+s)^{-\la}}{dt^\ell}\f{d^{k-\ell} \tilde{\eta}_{x}}{dt^{k-\ell}}(s)ds
\eali
\ee
where $\frac{d^{k} \tilde{\eta}_{x}}{d t^{k}}(0)$ can be determined by the equation inductively. We need to bound the second and the third term on the righthand of \eqref{eapp5}.

Using \eqref{eapp4}, we can deduce by induction that
\be\label{eapp6}
\left|\frac{d^{\ell} \tilde{\eta}^{-\g}_{x}(t)}{d t^{\ell}}\right| \leq C_k (1+t)^{-\frac{(\la+1)\g}{\gamma+1}-\ell}, \quad \ell=1,2, \ldots, k-1.
\ee
Substituting \eqref{eapp4} and \eqref{eapp6} into \eqref{eapp5}, we can get

\bes
\bali
&\frac{d^{k} \tilde{\eta}_{x}}{d t^{k}}(t)\\
\ls&e^{-\f{\mu}{1-\la}(1+t)^{1-\la}}\\
&+\int^t_0  e^{\f{\mu}{1-\la}\lt[(1+s)^{1-\la}-(1+t)^{1-\la}\rt]}(1+s)^{-\frac{(\la+1)\g}{\gamma+1}-(k-1)}ds\\
&+\sum^{k-1}_{\ell=1}\int^t_0  e^{\f{\mu}{1-\la}\lt[(1+s)^{1-\la}-(1+t)^{1-\la}\rt] }(1+s)^{-\la-\ell}(1+s)^{-\frac{(\la+1)\g}{\gamma+1}-(k-\ell)}(s)ds\\
\ls&e^{-\f{\mu}{1-\la}(1+t)^{1-\la}}\\
&+\int^t_0  e^{\f{\mu}{1-\la}\lt[(1+s)^{1-\la}-(1+t)^{1-\la}\rt]}(1+s)^{-\frac{(\la+1)\g}{\gamma+1}-(k-1)}ds\\
\ls&e^{-\f{\mu}{1-\la}(1+t)^{1-\la}}+(1+t)^{-\frac{(\la+1)\g}{\gamma+1}-(k-1)+\la}\\
\ls& (1+t)^{\frac{(\la+1)}{\gamma+1}-k}.
\eali
\ees
 This finishes the proof of \eqref{ecor3}.

\noindent{\bf Case 2: $\boldsymbol{\la=1}$:}

We have by solving \eqref{eapp2} that
\be\label{eapp3r}
\tilde{\eta}_{x t}(t)=\frac{2}{\gamma+1}(1+t)^{-\mu}+\frac{2\mu}{\gamma+1} (1+t)^{-\mu}\int_{0}^{t} (1+s)^\mu\tilde{\eta}_{x}^{-\gamma}(s) d s \geq 0.
\ee
The same as case $0<\la<1$, we use the induction to prove $\eqref{ecor4}$ for $k<\mu+\f{2}{\g+1}$. First, it follows from
\eqref{eapp3r} that
\bes
\bali
\tilde{\eta}_{x t}(t) &\les (1+t)^{-\mu}+ (1+t)^{-\mu }\int_{0}^{t} (1+s)^\mu(1+s)^{-\f{2\g}{\g+1}} d s\\
                       &\les (1+t)^{-\mu}+ (1+t)^{-\mu } (1+t)^{\mu-\f{2\g}{\g+1}+1} \\
                       & =(1+t)^{-\mu}+ (1+t)^{\f{2}{\g+1}-1}\\
                       &\les  (1+t)^{\f{2}{\g+1}-1}.
\eali
\ees
This proves $\eqref{ecor4}$ for $k=1 .$ For $2 \leq k <\mu+\f{2}{\g+1}$, we make the induction hypothesis that
\eqref{ecor4} holds for all $\ell=1,2, \ldots, k-1,$ that is
\be\label{eapp4r}
\left|\frac{d^{\ell} \tilde{\eta}_{x}(t)}{d t^{\ell}}\right| \leq C_k (1+t)^{\frac{2}{\gamma+1}-\ell}, \quad \ell=1,2, \ldots, k-1.
\ee
It suffices to prove \eqref{eapp4r} holds for $\ell=k .$ We derive from \eqref{eapp2} that
\bes
\bali
&\frac{d^{k+1} \tilde{\eta}_{x}}{d t^{k+1}}(t)+\f{\mu}{1+t}\frac{d^{k} \tilde{\eta}_{x}}{d t^{k}}(t)\\
=&\frac{2\mu}{\gamma+1} \frac{d^{k-1} \tilde{\eta}_{x}^{-\gamma}}{d t^{k-1}}(t)-\mu\sum^{k-1}_{\ell=1}C^\ell_{k-1}\f{d^\ell (1+t)^{-1}}{ds^\ell}\f{d^{k-\ell} \tilde{\eta}_{x}}{ds^{k-\ell}}.
\eali
\ees
So that
\be\label{eapp5r}
\bali
&\frac{d^{k} \tilde{\eta}_{x}}{d t^{k}}(t)\\
=&(1+t)^{-\mu}\frac{d^{k} \tilde{\eta}_{x}}{d t^{k}}(0)\\
                              &+\frac{2\mu}{\gamma+1}(1+t)^{-\mu}\int^t_0 (1+s)^{\mu}\frac{d^{k-1} \tilde{\eta}_{x}^{-\gamma}}{d t^{k-1}}(s)ds\\
                               &-\mu(1+t)^{-\mu}\int^t_0 \sum^{k-1}_{\ell=1}(1+s)^{\mu}C^\ell_{k-1}\f{d^\ell (1+s)^{-1}}{dt^\ell}\f{d^{k-\ell} \tilde{\eta}_{x}}{dt^{k-\ell}}(s)ds. \eali
\ee
We need to bound the second and the third terms on the righthand of \eqref{eapp5r}.

Using \eqref{eapp4r}, we can deduce by induction that
\be\label{eapp6r}
\left|\frac{d^{\ell} \tilde{\eta}^{-\g}_{x}(t)}{d t^{\ell}}\right| \leq C_k (1+t)^{-\frac{2\g}{\gamma+1}-\ell}, \quad \ell=1,2, \ldots, k-1.
\ee
Substituting \eqref{eapp4r} and \eqref{eapp6r} into \eqref{eapp5r}, we can get
\be\label{eapp7}
\bali
\frac{d^{k} \tilde{\eta}_{x}}{d t^{k}}(t)\les&(1+t)^{-\mu}+(1+t)^{-\mu}\int^t_0 (1+s)^{\mu}(1+s)^{-\f{2\g}{\g+1}-(k-1)}ds\\
                               &+(1+t)^{-\mu}\sum^{k-1}_{\ell=1}\int^t_0 (1+s)^{\mu}(1+s)^{-1-\ell}(1+s)^{\f{2}{\g+1}-(k-\ell)}ds  \\
                               \les&(1+t)^{-\mu}+(1+t)^{-\mu}\int^t_0(1+s)^{\mu+\f{2}{\g+1}-k-1}ds\\
                               \les&(1+t)^{-\mu}+(1+t)^{-\mu}(1+t)^{\mu+\f{2}{\g+1}-k}\\
                               \les&(1+t)^{\f{2}{\g+1}-k}.
\eali
\ee
This finishes the proof of \eqref{eapp4r} for $k<\mu+\f{2}{\g+1}$. If $k=\mu+\f{2}{\g+1}$, from the third line of \eqref{eapp7}, we have
\bes
\bali
\lt|\frac{d^{k} \tilde{\eta}_{x}}{d t^{k}}(t)\rt|
                               \les&(1+t)^{-\mu}+(1+t)^{-\mu}\int^t_0(1+s)^{-1}ds\\
                               \les&(1+t)^{-\mu}\ln(1+t).
\eali
\ees
When $k>\mu+\f{2}{\g+1}$, it is a routine work to prove that
\bes
\lt|\frac{d^{k} \tilde{\eta}_{x}}{d t^{k}}(t)\rt|\leq (1+t)^{-\mu}\ln(1+t) .
\ees
by again using induction. Since in this case, the terms
\bes
\int^t_0 (1+s)^{\mu}\frac{d^{k-1} \tilde{\eta}_{x}^{-\gamma}}{d t^{k-1}}(s)ds,\q \int^t_0 (1+s)^{\mu}\f{d^\ell (1+s)^{-1}}{dt^\ell}\f{d^{k-\ell} \tilde{\eta}_{x}}{dt^{k-\ell}}(s)ds
\ees
 in \eqref{eapp5r}, actually are bounded by $\ln(1+t)$. This finishes the proof of \eqref{ecor4}. \ef

\end{appendix}

\section*{Acknowledgments}  The author would like to thank Prof. Huicheng Yin and Dr. Fei Hou in Nanjing Normal University for helpful conversation.

\bibliographystyle{plain}



\def\cprime{$'$}


\end{document}